\newcommand\NN{\mathbb{N}}
\newcommand\ZZ{\mathbb{Z}}
\newcommand\CC{\mathcal{C}}
\newcommand\DD{\mathcal{D}}
\newcommand\QQ{\mathbb{Q}}
\newcommand\TT{\mathcal{T}}
\newcommand\UU{\mathcal{U}}
\newcommand\VV{\mathcal{V}}
\newcommand\eps{{\varepsilon}}
\renewcommand{\mod}{\operatorname{mod}}
\newtheorem{theorem}{Theorem}[section]
\newtheorem{example}[theorem]{Example}
\newtheorem{lemma}[theorem]{Lemma}
\newtheorem{remark}[theorem]{Remark}
\newtheorem{proposition}[theorem]{Proposition}
\newcommand{\address}{Address: Department of Mathematics, University of North Texas, 1155 Union Circle \#311430, Denton, TX 76203-5017, USA; E-mail: allaart@unt.edu}
\title{The infinite derivatives of Okamoto's self-affine functions: an application of $\beta$-expansions}
\author{Pieter C. Allaart \footnote{\address}}
\begin{document}

\maketitle

\begin{abstract}
Okamoto's one-parameter family of self-affine functions $F_a: [0,1]\to[0,1]$, where $0<a<1$, includes the continuous nowhere differentiable functions of Perkins ($a=5/6$) and Bourbaki/Katsuura ($a=2/3$), as well as the Cantor function ($a=1/2$). The main purpose of this article is to characterize the set of points at which $F_a$ has an infinite derivative. We compute the Hausdorff dimension of this set for the case $a\leq 1/2$, and estimate it for $a>1/2$. For all $a$, we determine the Hausdorff dimension of the sets of points where: (i) $F_a'=0$; and (ii) $F_a$ has neither a finite nor an infinite derivative. The upper and lower densities of the digit $1$ in the ternary expansion of $x\in[0,1]$ play an important role in the analysis, as does the theory of $\beta$-expansions of real numbers.

\bigskip
{\it AMS 2010 subject classification}: 26A27, 26A30 (primary),  28A78, 11A63 (secondary)

\bigskip
{\it Key words and phrases}: Continuous nowhere differentiable function; singular function; Cantor function; infinite derivative; ternary expansion; beta-expansion; Hausdorff dimension; Komornik-Loreti constant; Thue-Morse sequence.
\end{abstract}

\section{Introduction}

In 2005, H. Okamoto \cite{Okamoto} introduced and studied a one-parameter family of self-affine functions $\{F_a: 0<a<1\}$ on the interval $[0,1]$ defined as follows: Let $f_0(x)=x$, and inductively, for $n=0,1,2,\dots$, let $f_{n+1}$ be the unique continuous function which is linear on each interval $[j/3^{n+1},(j+1)/3^{n+1}]$ with $j\in\ZZ$ and satisfies, for $k=0,1,\dots,3^n-1$, the equations
\begin{gather*}
f_{n+1}(k/3^n)=f_n(k/3^n), \qquad f_{n+1}\big((k+1)/3^n\big)=f_n\big((k+1)/3^n\big),\\
f_{n+1}\big((3k+1)/3^{n+1}\big)=f_n(k/3^n)+a\left[f_n\big((k+1)/3^n\big)-f_n(k/3^n)\right],\\
f_{n+1}\big((3k+2)/3^{n+1}\big)=f_n(k/3^n)+(1-a)\left[f_n\big((k+1)/3^n\big)-f_n(k/3^n)\right].
\end{gather*}
(See Figure \ref{fig:construction}.)
The sequence $\{f_n\}$ thus defined converges uniformly on $[0,1]$. Let $F_a:=\lim_{n\to\infty}f_n$, so $F_a$ is a continuous function from the unit interval $[0,1]$ onto itself. The idea of this simple construction originated with Perkins \cite{Perkins}, who considered the case $a=5/6$ and proved that $F_{5/6}$ is nowhere differentiable. The case $2/3$ was similarly treated by Bourbaki \cite[p.~35, Problem 1-2]{Bourbaki} and later by Katsuura \cite{Katsuura}. As shown by Okamoto and Wunsch \cite{OkaWunsch}, $F_a$ is singular when $0<a\leq 1/2$ and $a\neq 1/3$; in particular, $F_{1/2}$ is the Cantor function. Note that $F_{1/3}(x)=x$. Figure \ref{fig:Okamoto-graphs} shows the graph of $F_a$ for several values of $a$.

\begin{figure}
\begin{center}
\begin{picture}(360,150)(0,15)
\put(30,20){\line(1,0){126}}
\put(30,20){\line(0,1){126}}
\put(30,146){\line(1,0){126}}
\put(156,20){\line(0,1){126}}
\put(27,15){\makebox(0,0)[tl]{$0$}}
\put(72,18){\line(0,1){4}}
\put(62,15){\makebox(0,0)[tl]{$1/3$}}
\put(114,18){\line(0,1){4}}
\put(104,15){\makebox(0,0)[tl]{$2/3$}}
\put(154,15){\makebox(0,0)[tl]{$1$}}
\put(17,25){\makebox(0,0)[tl]{$0$}}
\put(28,104){\line(1,0){4}}
\put(16,107){\makebox(0,0)[tl]{$a$}}
\put(28,62){\line(1,0){4}}
\put(-2,67){\makebox(0,0)[tl]{$1-a$}}
\put(19,150){\makebox(0,0)[tl]{$1$}}
\put(30,20){\line(1,2){42}}
\put(72,104){\line(1,-1){42}}
\put(114,62){\line(1,2){42}}
\put(60,122){\makebox(0,0)[tl]{$f_1$}}
\thicklines
\dottedline{4}(30,20)(156,146)
\thinlines
\put(230,20){\line(1,0){126}}
\put(230,20){\line(0,1){126}}
\put(230,146){\line(1,0){126}}
\put(356,20){\line(0,1){126}}
\put(227,15){\makebox(0,0)[tl]{$0$}}
\put(272,18){\line(0,1){4}}
\put(262,15){\makebox(0,0)[tl]{$1/3$}}
\put(314,18){\line(0,1){4}}
\put(304,15){\makebox(0,0)[tl]{$2/3$}}
\put(354,15){\makebox(0,0)[tl]{$1$}}
\put(217,25){\makebox(0,0)[tl]{$0$}}
\put(228,104){\line(1,0){4}}
\put(216,107){\makebox(0,0)[tl]{$a$}}
\put(228,62){\line(1,0){4}}
\put(198,67){\makebox(0,0)[tl]{$1-a$}}
\put(219,150){\makebox(0,0)[tl]{$1$}}
\put(230,20){\line(1,4){14}}
\put(244,76){\line(1,-2){14}}
\put(258,48){\line(1,4){14}}
\put(272,104){\line(1,-2){14}}
\put(286,76){\line(1,1){14}}
\put(300,90){\line(1,-2){14}}
\put(314,62){\line(1,4){14}}
\put(328,118){\line(1,-2){14}}
\put(342,90){\line(1,4){14}}
\put(260,122){\makebox(0,0)[tl]{$f_2$}}
\thicklines
\dottedline{4}(230,20)(272,104)
\dottedline{4}(272,104)(314,62)
\dottedline{4}(314,62)(356,146)
\end{picture}
\end{center}
\caption{The first two steps in the construction of $F_a$}
\label{fig:construction}
\end{figure}
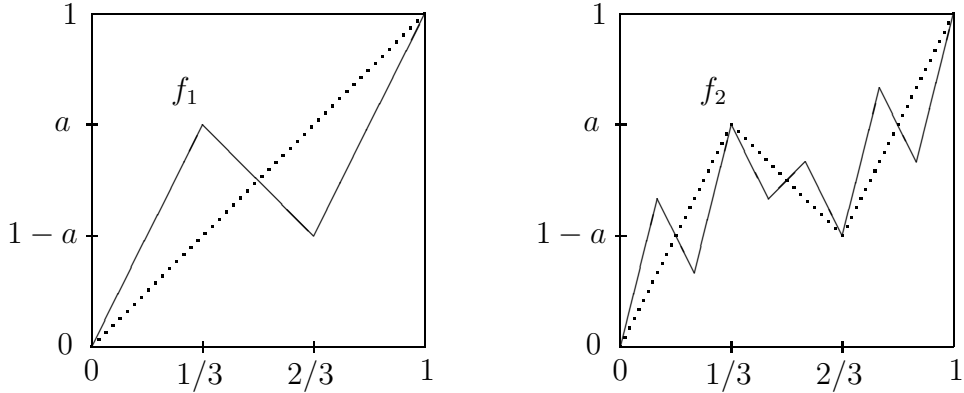

Let $a_0\approx .5592$ be the unique real root of $54a^3-27a^2=1$. Okamoto \cite{Okamoto} showed that (i) $F_a$ is nowhere differentiable if $2/3\leq a<1$; (ii) $F_a$ is nondifferentiable at almost every $x\in[0,1]$ but differentiable at uncountably many points if $a_0<a<2/3$; and (iii) $F_a$ is differentiable almost everywhere but nondifferentiable at uncountably many points if $0<a<a_0$. Okamoto left open the case $a=a_0$, but Kobayashi \cite{Kobayashi} later showed, using the law of the iterated logarithm, that $F_{a_0}$ is nondifferentiable almost everywhere. It is not difficult to see that, if $a\neq 1/3$ and $F_a$ has a finite derivative at $x$, then $F_a'(x)=0$; see Section \ref{sec:prelim}.

\begin{figure}
\begin{center}
\epsfig{file=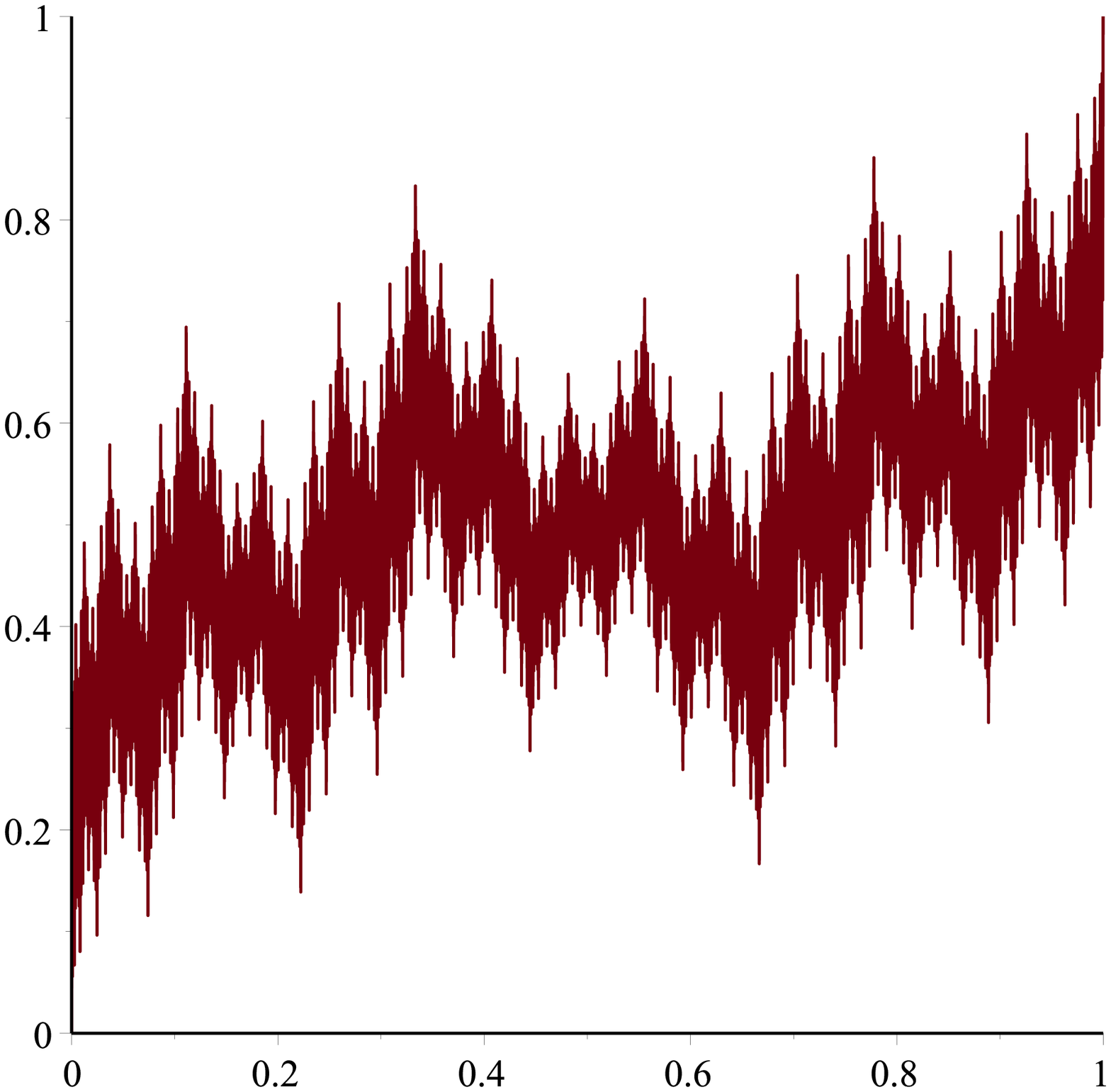, height=.3\textheight, width=.4\textwidth} \qquad\quad
\epsfig{file=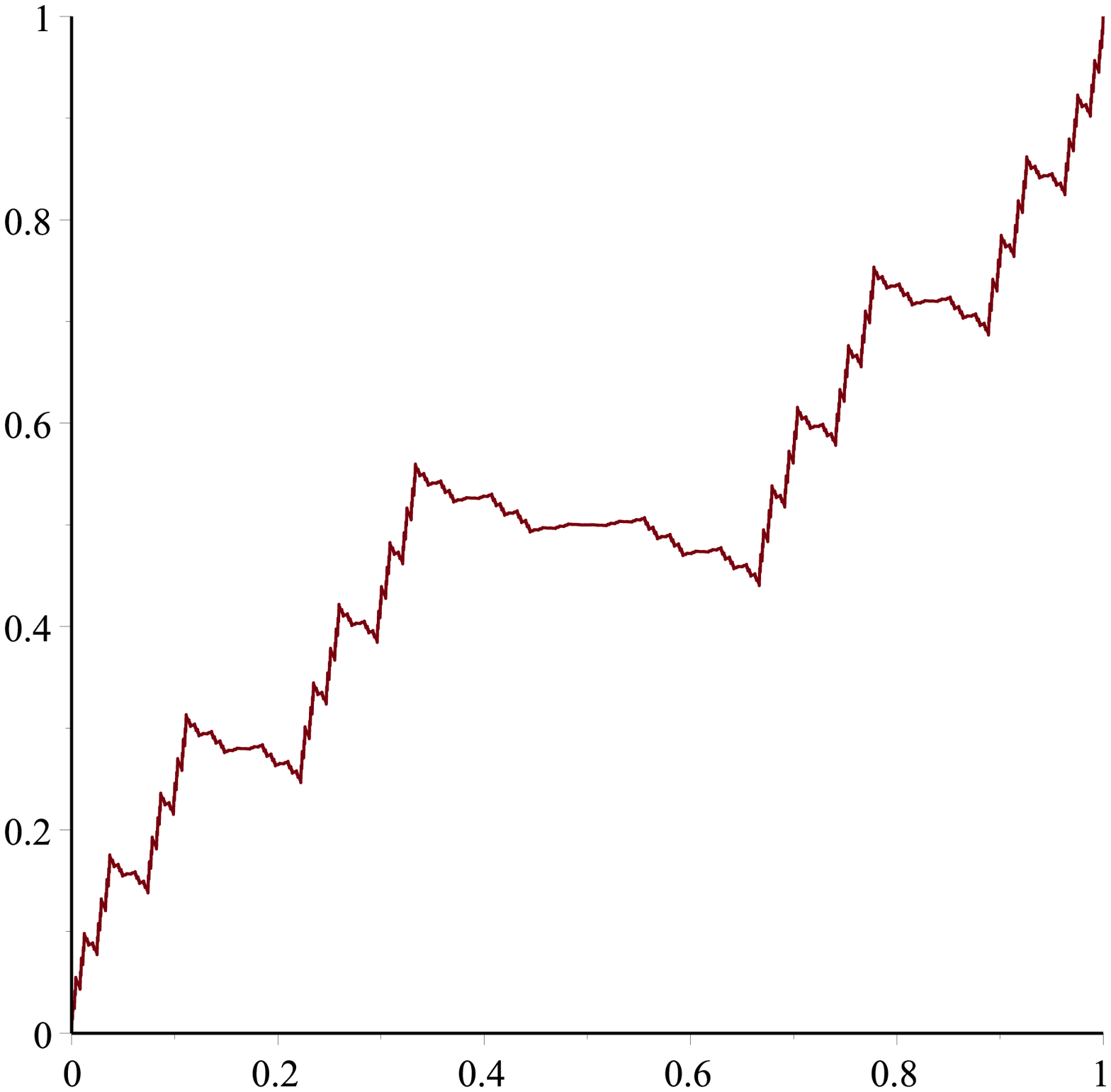, height=.3\textheight, width=.4\textwidth}\\
\epsfig{file=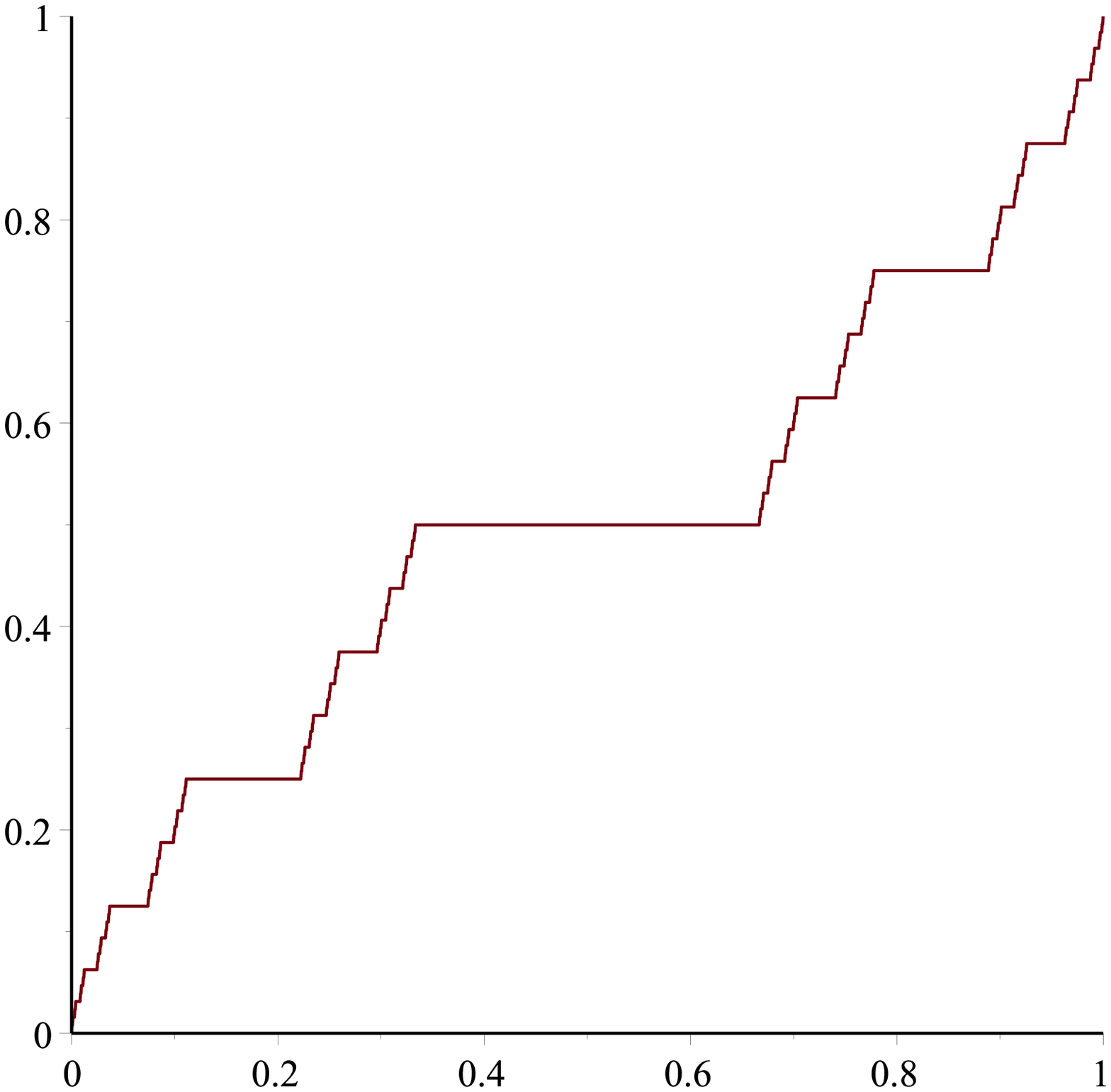, height=.3\textheight, width=.4\textwidth} \qquad\quad
\epsfig{file=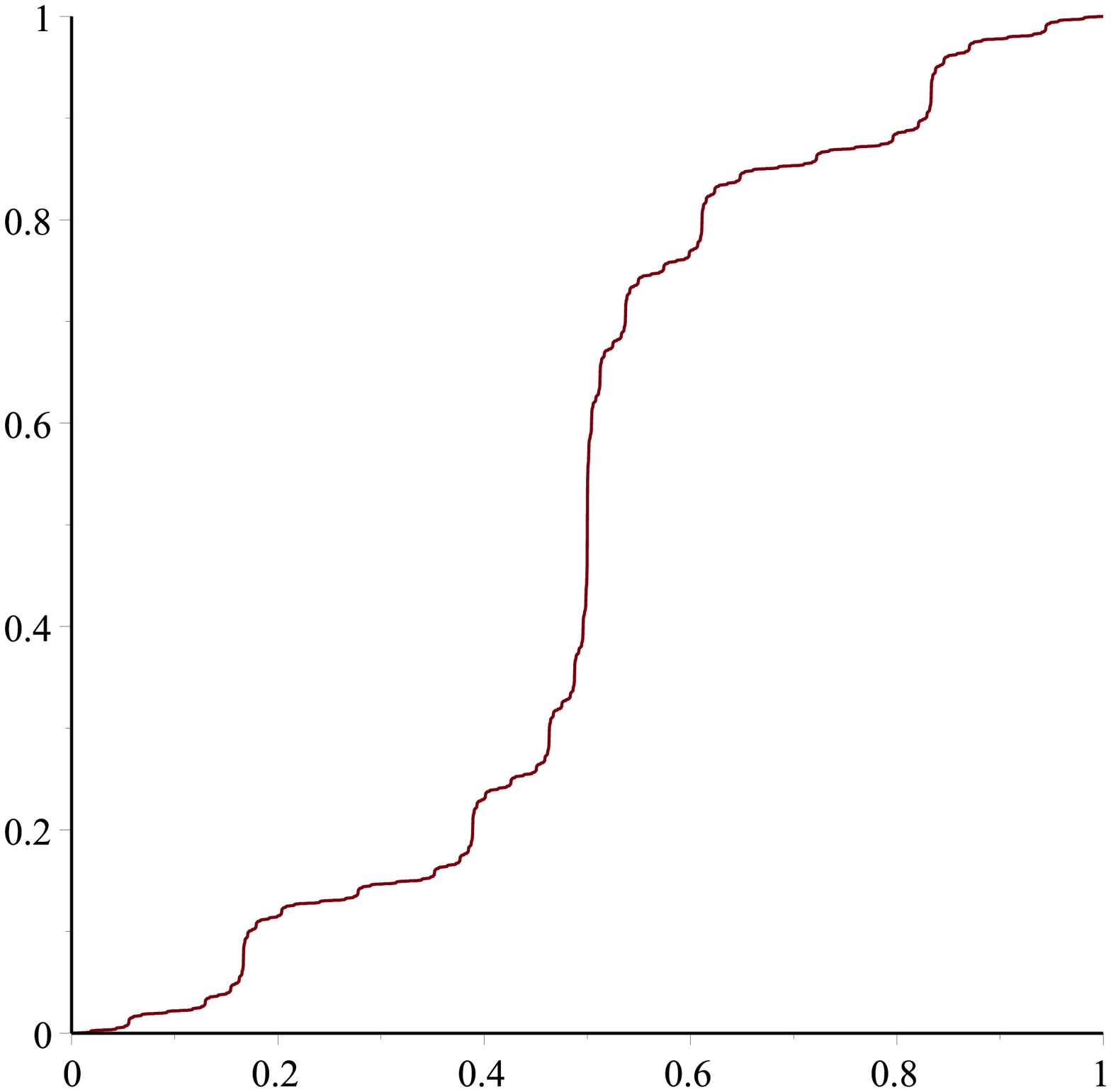, height=.3\textheight, width=.4\textwidth}
\caption{Graph of $F_a$ for several values of $a$. Top left: $a=5/6$ (Perkins' function); top right: $a=\hat{a}\approx .5598$; bottom left: $a=1/2$ (the Cantor function); bottom right: $a=1/5$ (a ``slippery devil's staircase").}
\label{fig:Okamoto-graphs}
\end{center}
\end{figure}

The main purpose of this article is to investigate the set $\DD_\infty(a)$ of points at which $F_a$ has an {\em infinite} derivative. In the parameter region $0<a<1/2$, where $F_a$ is strictly increasing, the situation is straightforward: $F_a'(x)=\infty$ if and only if $f_n'(x)\to\infty$. Since $f_n'(x)$ is readily expressed in terms of the ternary expansion of $x$, the Hausdorff dimension of $\DD_\infty(a)$ can be calculated for $a$ in this range by relating this set to certain sets defined in terms of the upper and lower frequency of the digit $1$ in the ternary expansion of $x\in(0,1)$. Using the same ideas we also obtain the Hausdorff dimensions of the exceptional sets in Okamoto's theorem; that is, the set of points where $F_a'(x)=0$ (for $a_0<a<2/3$), and the set of points where $F_a$ has neither a finite nor an infinite derivative (for $0<a<a_0$).
%\begin{align*}
%\DD_0(a):&=\{x\in(0,1): F_a\ \mbox{has a finite derivative at $x$}\}\\
%%&=\{x\in(0,1): F_a'(x)=0\}
%\end{align*}
%for $a_0<a<2/3$; and the dimension of
%\begin{equation*}
%\mathcal{N}(a):=\{x\in(0,1): F_a\ \mbox{has no (finite or infinite) derivative at $x$}\}
%\end{equation*}
%for all $a\in(0,1)$.

More interesting is the characterization of $\DD_\infty(a)$ in the parameter region $1/2<a<1$. Here $\DD_\infty(a)$ has strictly smaller Hausdorff dimension than the set $\{x:f_n'(x)\to\pm\infty\}$, though we are not able to compute the dimension of $\DD_\infty(a)$ exactly. Theorem \ref{thm:main} below gives a precise, though somewhat opaque, description of $\DD_\infty(a)$, which turns out to have surprising consequences. The condition for membership in $\DD_\infty(a)$ suggests a connection with $\beta$-expansions of real numbers, and indeed, we use the literature on $\beta$-expansions (e.g. \cite{GlenSid,JSS,Parry}) to show that $\DD_\infty(a)$ is (i) empty if $a\geq \rho:=(\sqrt{5}-1)/2\approx .6180$; (ii) countably infinite if $\hat{a}<a<\rho$; and (iii) uncountable with strictly positive Hausdorff dimension if $1/2<a<\hat{a}$. Here $\hat{a}\approx .5598$ is the reciprocal of the Komornik-Loreti constant, which is intimately related to the famous Thue-Morse sequence; see Section \ref{sec:prelim} below. Using very recent results on $\beta$-expansions by Kong and Li \cite{KongLi} and Komornik et al. \cite{KKL}, we conclude that the Hausdorff dimension of $\DD_\infty(a)$ is continuous on $1/2<a<\hat{a}$ and decreases on this interval in the manner of a ``reversed" devil's staircase.

In the boundary case $a=1/2$, we obtain Eidswick's \cite{Eidswick} characterization of $\DD_\infty(a)$ as a special case of our main theorem.

The condition for $F_a$ to have an infinite derivative at $x$ simplifies when $x$ is rational. We make this precise in the final section of the paper.

%Section \ref{sec:rational} specializes to rational $x$, for which a more concrete characterization is given. %In Section \ref{sec:dimensions} we determine the Hausdorff dimensions of the sets of points where (i) $F_a'(x)=0$; (ii) $F_a'(x)=\infty$ (for $0<a\leq 1/2$); (iii) $F_a$ does not have a finite derivative; and (iv) $F_a$ does not have a finite or infinite derivative. The upper and lower frequencies of the digit $1$ in the ternary expansion of a point $x$ play an important role in the analysis.

We mention here some other known results about Okamoto's functions. First, $F_a$ is self-affine: The portions of the graph above the intervals $[0,1/3]$, $[1/3,2/3]$ and $[2/3,1]$ are affine images of the whole graph, contracted horizontally by $1/3$ and vertically by $a$, $|2a-1|$ and $a$, respectively, with the middle one also being reflected vertically when $a>1/2$. As a result, the box-counting dimension of the graph of $F_a$ follows from Example 11.4 in \cite{Falconer}. It is $1$ if $a\leq 1/2$, and $1+\log_3(4a-1)$ if $a>1/2$. (Although the middle part of the graph is contracted more strongly in the vertical direction than in the horizontal direction when $a<2/3$, the formula given in \cite{Falconer} is nonetheless applicable since the affine maps do not include shears.) This result was also obtained by McCollum \cite{McCollum}, who claims the same value for the Hausdorff dimension of the graph. Unfortunately, his proof is incorrect. McCollum attempts to apply the mass distribution principle, and seems to use the mass distribution that assigns to each ``basic rectangle" a measure proportional to its height. But he then appears to incorrectly assume that within each basic rectangle the mass is uniformly distributed. In fact, the Hausdorff dimension of the graph of $F_a$ seems rather more difficult to determine than the box-counting dimension, and it would not be surprising if the two dimensions were different for certain values of $a$.

%Another interesting but perhaps difficult problem would be to investigate the level set structure of $F_a$.

Second, a very interesting paper by Seuret \cite{Seuret} shows how $F_a$ can be expressed as the composition of a monofractal function and an increasing function, and also computes the multifractal spectrum of $F_a$. 

In recent years, there has been a great deal of interest in the non-differentiability sets of singular functions. Much of the research concerns two main classes of functions: Cantor-like functions, whose points of increase form a Lebesgue-null set, on the one hand; and strictly increasing singular functions on the other. Following \cite{KS1}, let us call functions in the former class ``ordinary devil's staircases", and functions in the latter class ``slippery devil's staircases". In most cases, formulas are given for the Hausdorff dimension of the sets $\Delta_\infty(f)$ and $\Delta_\sim(f)$ of points where the function $f$ in question has, respectively, an infinite derivative, or neither a finite nor an infinite derivative. For the class of ordinary devil's staircases, this work was begun by Darst \cite{Darst}, who showed for $f$ the classical Cantor function ($F_{1/2}$ in our notation) that $\dim_H \Delta_{\sim}(f)=(\log 2/\log 3)^2$. Successive generalizations were obtained by Darst \cite{Darst2}, Falconer \cite{Falconer2}, Kesseb\"ohmer and Stratmann \cite{KS1}, and, most recently, Troscheit \cite{Troscheit}. Slippery devil's staircases were examined by Jordan et al. \cite{JKPS} and Kesseb\"ohmer and Stratmann \cite{KS2}, among others. Qualitatively, the results in the above papers suggest the following dichotomy: $\dim_H \Delta_\sim(f)<\dim_H \Delta_\infty(f)$ for ordinary devil's staircases, while $\dim_H \Delta_\sim(f)=\dim_H \Delta_\infty(f)$ for slippery devil's staircases. This is illustrated by Okamoto's functions $F_a$, which, for $0<a<1/2$, belong to the family of conjugacies of interval maps considered in \cite{JKPS}. (See Theorem \ref{thm:size-and-dimension} below.)

Finally, the infinite derivatives of another famous continuous nowhere differentiable function, namely that of Takagi \cite{Takagi}, were characterized by the present author and Kawamura \cite{AK} and Kr\"uppel \cite{Kruppel}.

\section{Notation and main results} \label{sec:prelim}

The following notation is used throughout. The set of positive integers is denoted by $\NN$, and the set of nonnegative integers by $\ZZ_+$. For $x\in[0,1]$, the {\em ternary expansion} of $x$ is the sequence $\xi_1,\xi_2,\dots$ defined by $x=\sum_{n=1}^\infty \xi_n/3^n$, and $\xi_n\in\{0,1,2\}$ for all $n$. If $x$ has two ternary expansions we take the one ending in all $0$'s, except when $x=1$, in which case we take the expansion ending in all $2$'s. For $n\in\NN$, let $i(n):=\#\{j: 1\leq j\leq n, \xi_j=1\}$, so $i(n)$ is the number of $1$'s in the first $n$ ternary digits of $x$. When ambiguities may arise we write $\xi_n(x)$ instead of $\xi_n$, and $i(n;x)$ instead of $i(n)$. Let $N_1(x):=\sup_n i(n)$ be the total number of $1$'s in the ternary expansion of $x$. Denote by $\CC$ the ternary Cantor set in $[0,1]$. The Hausdorff dimension of a set $E$ will be denoted by $\dim_H E$; see \cite{Falconer} for the definition and properties.

For a function $h$, let $h^+$ and $h^-$ denote the right-hand and left-hand derivatives of $h$, respectively (assuming they exist). Note that
\begin{equation}
f_n^+(x)=3^n a^{n-i(n)}(1-2a)^{i(n)}, \qquad x\in[0,1),
\label{eq:fn-slope}
\end{equation}
where following standard convention we set $0^0=1$.

\begin{proposition} \label{prop:zero-or-infinite}
If $a\neq 1/3$ and $F_a$ has a finite derivative at $x$, then $F_a'(x)=0$.
\end{proposition}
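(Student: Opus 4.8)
The plan is to exploit the self-affine structure of $F_a$ together with the slope formula \eqref{eq:fn-slope}. Fix $x\in[0,1)$ with ternary expansion $\xi_1\xi_2\dots$, and suppose $F_a$ has a finite derivative $F_a'(x)=L$. First I would observe that, since $F_a$ is piecewise linear at each finite stage, the derivative $L$ must be approximated along the dyadic-type mesh: if $x$ is not a triadic rational, then for each $n$ the point $x$ lies in the interior of a unique interval $I_n=[k_n/3^n,(k_n+1)/3^n]$ on which $f_n$ is linear with slope $f_n^+(x)=3^n a^{n-i(n)}(1-2a)^{i(n)}$, and moreover $F_a$ restricted to $I_n$ is an affine copy of $F_a$ (up to a vertical reflection when the relevant digit forces the middle branch and $a>1/2$). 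Taking the increment of $F_a$ across $I_n$ and dividing by $3^{-n}=|I_n|$ shows that the slope of the chord of $F_a$ over $I_n$ equals exactly $f_n^+(x)$. Since $x$ is interior to $I_n$ and $|I_n|\to 0$, the existence of $F_a'(x)=L$ forces $f_n^+(x)\to L$; that is, $3^n a^{n-i(n)}(1-2a)^{i(n)}\to L$.

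Next I would analyze the possible limits of $c_n:=3^n a^{n-i(n)}(1-2a)^{i(n)}$. Writing $c_{n+1}/c_n = 3a$ if $\xi_{n+1}\in\{0,2\}$ and $c_{n+1}/c_n=3|1-2a|$ if $\xi_{n+1}=1$ (using $|1-2a|$ after absorbing the sign into a reflection), we see $c_n$ is a product of factors each equal to $3a$ or $3|1-2a|$. The key point is that neither $3a$ nor $3|1-2a|$ equals $1$ when $a\neq 1/3$: indeed $3a=1\iff a=1/3$, and $3|1-2a|=1\iff a\in\{1/3,\ 2/3\}$, so the only overlap forcing a ratio of $1$ is excluded except possibly $a=2/3$, which I would handle by noting $3a=2\neq1$ there so $c_n$ cannot converge to a nonzero limit unless the digit $1$ eventually always occurs, and then $c_n\to$ a positive limit only if $3|1-2a|=1$, impossible to combine with a nonzero limit unless... — more cleanly: if $c_n\to L$ with $0<L<\infty$, then $c_{n+1}/c_n\to 1$, but $c_{n+1}/c_n\in\{3a,\ 3|1-2a|\}$, a finite set none of whose elements is $1$ (when $a\neq1/3$; the value $3|1-2a|=1$ at $a=2/3$ is the single exception, and there $3a=2$, so a ratio sequence landing in $\{2,1\}$ converges to $1$ only if it is eventually $1$, i.e. $\xi_n=1$ eventually, whence $c_n$ is eventually constant — but then $c_n$ was multiplied by $3a=2$ infinitely often before stabilizing only if finitely many non-$1$ digits occur, giving $c_n\to L$ with the tail all $1$'s; this case must be examined directly, but there $F_a$ near $x$ looks like the tent-type map and one checks $F_a'(x)$ does not exist finitely and nonzero). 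Hence $c_n\not\to L$ for any $L\in(0,\infty)$, leaving only $L=0$ or $L=\infty$; since $F_a'(x)=L$ was assumed \emph{finite}, we conclude $L=0$.

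Finally I would dispose of the triadic rationals: if $x=k/3^n$, then $F_a'(x)$ finite forces both one-sided derivatives to be finite and equal, and each one-sided derivative is, by the self-affine scaling on the adjacent intervals, a limit of products of the form $c_m$ as above (built from the one-sided ternary expansion), so the same argument applies on each side. I expect the main obstacle to be the bookkeeping around the vertical reflection of the middle branch when $a>1/2$: one must verify that a finite two-sided derivative at an interior point still forces $c_n\to 0$ despite the alternating signs, which follows because $|F_a(x+h)-F_a(x)|/|h|$ is squeezed by the chord slopes $c_n=|$ signed slope $|$ regardless of sign, so convergence of the difference quotient still forces $c_n\to 0$. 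The exceptional value $a=2/3$ needs a short separate check as indicated, but is covered since $2/3\ge2/3$ places it in Okamoto's nowhere-differentiable regime, so $F_a$ has no finite derivative there at all, making the proposition vacuously true for that $a$.
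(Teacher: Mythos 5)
Your argument is correct and is essentially the paper's proof: one shows $F_a'(x)=\lim_{n\to\infty} f_n^+(x)$ because $F_a$ agrees with $f_n$ at triadic points (so the chord slopes over the triadic intervals straddling $x$ are exactly $f_n^+(x)$), and then observes that consecutive slope ratios lie in a two-element set not containing $1$, so a finite nonzero limit is impossible. The one divergence is self-inflicted: by working with $3|1-2a|$ you create a spurious exceptional case at $a=2/3$ (where $3|1-2a|=1$), whereas the paper keeps the signed ratio $3(1-2a)$, which equals $1$ only at $a=1/3$; your fallback to nowhere-differentiability for $a\geq 2/3$ does legitimately close that gap, but it is unnecessary, and note that the genuinely delicate parameter is $a=1/2$, where the ratio can be $0$ and the paper argues directly from \eqref{eq:fn-slope} instead.
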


\begin{proof}
Since $F_a(k/3^n)=f_n(k/3^n)$ for $n\in\NN$ and $k\in\{0,1,\dots,3^n\}$, it follows that if $F_a$ has a derivative (finite or infinite) at $x$, its value must be $F_a'(x)=\lim_{n\to\infty} f_n^+(x)$. If $a\not\in\{1/3,1/2\}$, then $f_{n+1}^+(x)/f_n^+(x)\in\{3a,3(1-2a)\}$ for each $n$, so $\lim_{n\to\infty}f_n^+(x)$, if it exists, can only equal $0$ or $\pm\infty$. If $a=1/2$, it is immediate from \eqref{eq:fn-slope} that $f_n^+(x)$ cannot converge to a positive and finite value. 
\end{proof}

The next proposition identifies situations where the derivative of $F_a$ behaves ``as expected". The first statement was included in \cite{Okamoto} without proof.

\begin{proposition} \label{prop:zero-derivative}
Let $x\in(0,1)$.
\begin{enumerate}[(i)]
\item If $a\neq 1/2$ and $f_n^+(x)\to 0$, then $F_a'(x)=0$.
\item If $0<a<1/2$ and $f_n^+(x)\to\infty$, then $F_a'(x)=\infty$.
\end{enumerate}
%(i) Let $a=1/2$, and $x\in[0,1]$. If $f_n^+(x)\to 0$, then $F_a^+(x)=0$. Likewise, if $f_n^-(x)\to 0$, then $F_a^-(x)=0$.
%(ii) Let $a\in(0,1)\backslash \{1/2\}$, and $x\in(0,1)$. If $f_n^+(x)\to 0$, then $F_a'(x)=0$.
\end{proposition}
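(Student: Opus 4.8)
The plan is to estimate the difference quotients of $F_a$ directly, using the self-affinity of $F_a$ to control its oscillation over triadic intervals. For $x\in[0,1)$ and $n\in\NN$ I write $I_n(x)=[k_n/3^n,(k_n+1)/3^n]$ for the level-$n$ triadic interval containing $x$, where $k_n=\sum_{j=1}^n\xi_j3^{n-j}$ is built from the first $n$ ternary digits of $x$. Since the restriction of $F_a$ to $I_n(x)$ is an affine (possibly reflected) copy of $F_a$ on $[0,1]$, and $F_a$ maps $[0,1]$ onto itself with $F_a(0)=0$, $F_a(1)=1$, the image $F_a(I_n(x))$ is an interval of length
\[
H_n(x):=a^{n-i(n)}|1-2a|^{i(n)}=3^{-n}|f_n^+(x)|,
\]
by \eqref{eq:fn-slope}; in particular any two values of $F_a$ on $I_n(x)$ differ by at most $H_n(x)$. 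The key elementary fact I would establish first is a comparison between the heights of neighbouring intervals. If $I'$ is the triadic interval immediately adjacent to $I_n(x)$, its left endpoint is obtained from $k_n$ by adding or subtracting $1$ in base $3$: a trailing run of $2$'s (resp.\ $0$'s) is turned into $0$'s (resp.\ $2$'s), and the preceding digit is incremented (resp.\ decremented) by one. Interchanging $2$'s and $0$'s does not create or destroy any digit equal to $1$, so $i(n)$ changes by exactly $\pm1$ in passing to $I'$. With $r:=|1-2a|/a\in(0,\infty)$ (finite precisely because $a\neq1/2$) this gives $H(I')=H_n(x)\,r^{\pm1}\le C\,H_n(x)$, where $C:=\max\{r,1/r\}$ depends only on $a$; iterating, any level-$n$ interval whose index differs from $k_n$ by at most a fixed $c_0$ has height at least $c_1H_n(x)$ with $c_1:=(\min\{r,1/r\})^{c_0}>0$.

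\textbf{Part (i).} Given $h$ with $3^{-(n+1)}\le|h|<3^{-n}$, the point $x+h$ lies in $I_n(x)$ or in the one adjacent interval on its side. Splitting at the shared endpoint and using the range bound on $I_n(x)$ and its neighbour, I would obtain $|F_a(x+h)-F_a(x)|\le(1+C)H_n(x)$. Dividing by $|h|\ge3^{-(n+1)}$ and recalling $3^nH_n(x)=|f_n^+(x)|$ gives
\[
\left|\frac{F_a(x+h)-F_a(x)}{h}\right|\le 3(1+C)\,|f_n^+(x)|.
\]
As $h\to0$ we have $n\to\infty$, so the hypothesis $f_n^+(x)\to0$ forces the right-hand side to $0$, whence $F_a'(x)=0$.

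\textbf{Part (ii).} Here $0<a<1/2$, so all increments are positive and $F_a$ is strictly increasing. Given small $h>0$, I would choose $m$ with $3^{-(m-1)}\le h<3^{-(m-2)}$, so that $[x,x+h]$ has length at least $3\cdot3^{-m}$ and hence contains a full level-$m$ interval $J$. Because $J\subseteq[x,x+h]$ and $h<9\cdot3^{-m}$, the index of $J$ differs from $k_m$ by at most a fixed constant $c_0$, so $H(J)\ge c_1H_m(x)$ by the comparison above. Monotonicity then yields $F_a(x+h)-F_a(x)\ge H(J)$, and therefore
\[
\frac{F_a(x+h)-F_a(x)}{h}\ge\frac{c_1H_m(x)}{h}>\frac{c_1}{9}\,3^mH_m(x)=\frac{c_1}{9}\,f_m^+(x).
\]
The same argument applied to a full level-$m$ interval inside $[x+h,x]$ treats $h<0$. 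Since $m\to\infty$ as $h\to0$ and $f_m^+(x)\to\infty$, both one-sided quotients tend to $+\infty$, giving $F_a'(x)=\infty$.

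The hard part in both cases is exactly the control of neighbouring heights: a priori a carry in the ternary expansion could make an adjacent (or nearby) triadic interval far taller or shorter than $I_n(x)$, which would sever the comparison with $f_n^+(x)$ and wreck the estimate. The observation that a unit increment of the index alters the count of $1$'s by only $\pm1$ — so that all the relevant constants $C$ and $c_1$ depend on $a$ alone — is what removes this obstacle. I would dispose of the boundary cases (where $x$ is near $0$ or $1$, or is a triadic rational so that the required neighbour may sit outside $[0,1]$ or coincide with an endpoint where $F_a(x)$ already equals a shared grid value) by noting that for $h\to0$ the only operative case is the one in which the needed neighbour lies inside $[0,1]$, the remainder reducing to the ``same interval'' estimate, which is even sharper.
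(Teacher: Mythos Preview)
Your proof is correct and follows essentially the same route as the paper. Both arguments hinge on the fact that adjacent level-$n$ triadic intervals have heights (equivalently, $f_n$-slopes) in ratio $r^{\pm1}$ with $r=|1-2a|/a$; the paper records this as \eqref{eq:consecutive-slopes-ratio} and justifies it by induction, whereas you derive it combinatorially via the ternary carry, which is a nice alternative. For part (ii) the paper abstracts the argument into a general lemma (Lemma~\ref{lem:when-infinite-derivative}) about piecewise-linear approximants with uniformly comparable adjacent slopes, but the content --- locate a full fine-scale triadic interval inside $[x,x+h]$, use monotonicity, and compare its height to $H_m(x)$ via the bounded slope ratio --- is exactly what you do.
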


Proposition \ref{prop:zero-or-infinite} indicates a natural partition of $(0,1)$ into the three sets
\begin{gather*}
\DD_0(a):=\{x\in(0,1): F_a'(x)=0\},\\
\DD_\infty(a):=\{x\in(0,1): F_a'(x)=\pm\infty\},
%\DD_{-\infty}(a):=\{x\in[0,1]: F_a'(x)=-\infty\},
\end{gather*}
and
\begin{equation*}
\mathcal{N}(a):=\{x\in(0,1): F_a\ \mbox{has no (finite or infinite) derivative at $x$}\}.
\end{equation*}
Let $\mathcal{L}$ denote Lebesgue measure on $(0,1)$. By Okamoto's theorem \cite[Theorem 4]{Okamoto}, $\mathcal{L}(\DD_0(a))=1$ for $0<a<a_0$, $a\neq 1/3$, and $\mathcal{L}(\DD_0(a))=0$ for $a\geq a_0$. From Proposition \ref{prop:zero-derivative} and \eqref{eq:fn-slope} it transpires that membership of a point $x$ in $\DD_0(a)$ is nearly determined by the (upper or lower) frequency of the digit $1$ in the ternary expansion of $x$. This enables us to compute $\dim_H\DD_0(a)$ when $a_0\leq a<2/3$, and similarly, $\dim_H\DD_\infty(a)$ for $0<a<1/2$, and $\dim_H\mathcal{N}(a)$ for all $a$. This is undertaken in Section \ref{sec:dimensions}.

By contrast, it turns out that when $a\geq 1/2$, $F_a$ may not have an infinite derivative at $x$ even if $\lim_{n\to\infty}f_n'(x)=\pm\infty$. In fact, we will see in Section \ref{sec:beta-expansions} that for $a>1/2$, $\dim_H\DD_\infty(a)$ is strictly smaller than that of $\{x\in[0,1]:f_n'(x)\to\pm\infty\}$. The main theorem below uses the following additional notation. For integers $j$ and $k$, let
\begin{equation*}
\delta_k(j):=\begin{cases}
1 &\mbox{if $j=k$},\\
0 &\mbox{if $j\neq k$}.
\end{cases}
\end{equation*}
For $d\in\{0,1,2\}$ and $n\in\NN$, let $r_n(d)$ denote the run length of the digit $d$ starting with the $(n+1)$th digit of $x$. That is,
\begin{equation*}
r_n(d):=\inf\{k>n: \xi_k\neq d\}-n-1.
\end{equation*}

\begin{theorem} \label{thm:main}
\begin{enumerate}[(i)]
\item Let $1/2<a<1$. Then $F_a'(x)=\pm\infty$ if and only if $N_1(x)<\infty$ and
\begin{equation}
(3a)^n\left(1-\sum_{k=1}^\infty a^{k}\delta_d(\xi_{n+k})\right)\to\infty, \qquad d=0,2,
\label{eq:main-condition}
\end{equation}
in which case $F_a'(x)=\infty$ if $N_1(x)$ is even, and $F_a'(x)=-\infty$ if $N_1(x)$ is odd.
\item Let $a=1/2$, and put $c:=\log_2 3-1$. Then $F_a'(x)=\infty$ if and only if $N_1(x)=0$ and
\begin{equation}
cn-r_n(d)\to\infty, \qquad d=0,2.
\label{eq:Cantor-function-condition}
\end{equation}
\end{enumerate}
\end{theorem}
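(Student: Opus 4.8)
The plan is to analyze the increments of $F_a$ directly. Fix $x\in(0,1)$ with ternary expansion $\xi_1,\xi_2,\dots$, and for each $n$ let $x_n:=\sum_{j=1}^n \xi_j/3^j$ be the left endpoint of the $n$th generation triadic interval $I_n=[x_n,x_n+3^{-n}]$ containing $x$. The key point is that while $f_n$ is linear on $I_n$ with slope given by \eqref{eq:fn-slope}, the limit function $F_a$ is \emph{not} linear there; the correct object to track is the actual increment $F_a(x_n+3^{-n})-F_a(x_n)$. Since the graph of $F_a$ over $I_n$ is an affine copy of the whole graph — contracted horizontally by $3^{-n}$ and vertically by $\prod_{j=1}^n s_j$ where $s_j=a$ if $\xi_j\in\{0,2\}$ and $s_j=2a-1$ if $\xi_j=1$ (up to sign) — this increment equals, in absolute value, $a^{n-i(n)}|2a-1|^{i(n)}$, which is exactly $|f_n^+(x)|/3^n$. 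First I would establish the sign bookkeeping: if $N_1(x)<\infty$, then for all large $n$ the quantity $i(n)=N_1(x)$ is constant, the middle-branch reflections have been applied a fixed number of times, and $F_a$ is eventually \emph{monotone} on the nested intervals $I_n$ — increasing if $N_1(x)$ is even, decreasing if odd. If $N_1(x)=\infty$, infinitely many sign reversals prevent any one-sided derivative from being $\pm\infty$ (this needs a short argument comparing increments on adjacent subintervals), so $N_1(x)<\infty$ is necessary.

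Next, assuming $N_1(x)<\infty$ so that (WLOG) $F_a$ is increasing near $x$ with eventual increment $A_n:=a^{n-N_1}(2a-1)^{N_1}\cdot(\text{const})$ over $I_n$ — note $(3a)^n A_n\to\infty$ exactly when $(3a)^n\to\infty$, i.e. always, since $a>1/2$ — I would reduce the derivative question to a \emph{two-sided} estimate of $F_a(y)-F_a(x)$ for $y$ near $x$. The standard approach: for $y>x$ close to $x$, locate the largest $n$ with $y\in I_n$; then $y$ lies in one of the three children of $I_n$, and the decisive contribution to $F_a(y)-F_a(x)$ comes from how far $x$ sits from the right end of $I_n$ versus how far $y$ reaches. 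Here is where the run-lengths enter: if $\xi_{n+1}=\xi_{n+2}=\dots=\xi_{n+k}=d$ for $d\in\{0,2\}$ with $k=r_n(d)$, then $x$ is within distance $\approx 3^{-(n+k)}$ of an endpoint of $I_n$, and on the adjacent interval the graph of $F_a$ rises (or drops) by the \emph{next-level} increments. Summing the geometric-type series of these increments produces precisely the factor $1-\sum_{k\ge 1}a^k\delta_d(\xi_{n+k})$: each consecutive run of $d$'s at positions $n+1,\dots,n+k$ subtracts $a+a^2+\cdots$ from the "available rise" on one side. Thus $F_a(x)-F_a(x_n)\asymp A_n\big(1-\sum_k a^k\delta_0(\xi_{n+k})\big)$ measures the room below $x$, and symmetrically $F_a(x_n+3^{-n})-F_a(x)\asymp A_n\big(1-\sum_k a^k\delta_2(\xi_{n+k})\big)$ the room above; dividing by $3^{-n}$ and recalling $3^n A_n\asymp (3a)^n$ up to the fixed constant $(2a-1)^{N_1}$ yields \eqref{eq:main-condition}.

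For part (ii), the case $a=1/2$ is the degenerate limit where $2a-1=0$, so $N_1(x)\ge 1$ forces the vertical contraction factor to vanish and $F_a$ is locally constant-ish — one checks $N_1(x)=0$ is necessary (this is essentially that $x\in\CC$ together with a boundary subtlety). With $N_1(x)=0$ we have $x\in\CC$, $A_n=2^{-n}$, and $1-\sum_k 2^{-k}\delta_d(\xi_{n+k})=1-(1-2^{-r_n(d)})=2^{-r_n(d)}$ for a run of length $r_n(d)$. Then $3^n A_n(1-\sum_k\cdots)=3^n 2^{-n}2^{-r_n(d)}=2^{cn-r_n(d)}$ with $c=\log_2 3-1$, and this tends to $\infty$ iff $cn-r_n(d)\to\infty$, giving \eqref{eq:Cantor-function-condition} and recovering Eidswick's criterion. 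The main obstacle I anticipate is making the "$\asymp$" estimates in the previous paragraph fully rigorous and \emph{uniform in $y$}: one must show that the displayed quantities for the \emph{specific} endpoint sequence $x_n$ genuinely control $\liminf_{y\to x}|F_a(y)-F_a(x)|/|y-x|$, not merely the limit along triadic points — the worry being a $y$ that sits deep inside a long run of $d$'s where the local increments, though individually tiny, could conspire. Handling this requires bounding the oscillation of $F_a$ on the child interval $I_{n+1}$ containing $y$ by its total increment there (which follows from self-affinity: $\sup_{I_{n+1}}F_a-\inf_{I_{n+1}}F_a = A_{n+1}\cdot\|F_a\|_\infty = A_{n+1}$), and then checking that these error terms are swamped by the main term precisely under condition \eqref{eq:main-condition}. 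I would isolate this as a lemma before proving the theorem.
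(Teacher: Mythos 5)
There is a genuine gap at the heart of your argument: you attach the factor $1-\sum_{k\ge1}a^k\delta_d(\xi_{n+k})$ to the wrong increments. Take $N_1(x)=0$ for simplicity and let $x_n$ be the left endpoint of $I_n$. The explicit series $F_a(x)=\sum_k a^{k-1}(1-a)\delta_2(\xi_k)$ gives the exact identities
$F_a(x)-F_a(x_n)=\tfrac{1-a}{a}\,a^n\sum_{k}a^k\delta_2(\xi_{n+k})$ and
$F_a(x_n+3^{-n})-F_a(x)=a^n\bigl(1-\tfrac{1-a}{a}\sum_{k}a^k\delta_2(\xi_{n+k})\bigr)$.
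Both are always nonnegative (they sum to $a^n$), so neither is comparable to $a^n\bigl(1-\sum_k a^k\delta_2(\xi_{n+k})\bigr)$, which can be negative; your claimed ``$\asymp$'' statements are false, and more importantly the scheme cannot yield necessity of \eqref{eq:main-condition}. Concretely, for $x=0.022(02)022(02)^2\dots$ and $a=0.56$ one has $\limsup_n\sum_k a^k\delta_2(\xi_{n+k})=a+a^2/(1-a^2)\approx1.017>1$, so \eqref{eq:main-condition} fails and $x\notin\DD_\infty(a)$; yet $1-\tfrac{1-a}{a}\sum_k a^k\delta_2(\xi_{n+k})$ stays bounded below by about $0.2$, so your ``room above within $I_n$'' test, multiplied by $3^n$, tends to $\infty$ and would wrongly certify an infinite derivative.

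The missing idea is that for $a>1/2$ the decisive test points lie \emph{outside} the interval containing $x$: if $x\in[(j-1)/3^n,j/3^n)$, the slope of $f_n$ on the adjacent interval $[j/3^n,(j+1)/3^n]$ is $3^na^{n-1}(1-2a)<0$, so $F_a$ dips there, and one computes exactly $F_a\bigl((j+1)/3^n\bigr)-F_a(x)=a^{n-1}(1-a)\bigl(1-\sum_k a^k\delta_2(\xi_{n+k})\bigr)$. Since $(j+1)/3^n-x\in(3^{-n},2\cdot3^{-n}]$, these are genuine difference quotients and force necessity of \eqref{eq:main-condition} for $d=2$ (the case $d=0$ and the left-hand derivative follow from $F_a(1-x)=1-F_a(x)$); sufficiency then amounts to showing that every other $y>x$ does at least as well up to a bounded factor, using that over each triadic interval the graph stays between its endpoint values. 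Your preliminary reductions (necessity of $N_1(x)<\infty$, the sign via the parity of $N_1(x)$, the self-affine reduction to $N_1(x)=0$) and the run-length computation for $a=1/2$ are essentially the paper's, but without the adjacent-interval identity the equivalence with \eqref{eq:main-condition} is not established, and the error-control lemma you propose at the end would be built on the wrong main term.
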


In fact, we shall see in Section \ref{sec:proof} that condition \eqref{eq:main-condition} for $d=0$ (resp., $d=2$) is necessary in order for $F_a$ to have an infinite left-hand (resp., right-hand) derivative at $x$, and similarly for condition \eqref{eq:Cantor-function-condition}.

Note that (ii) specifies the points of infinite derivative of the Cantor function. Such a characterization was given previously by Eidswick \cite[Corollary 1]{Eidswick}, who showed, as a consequence of a more general theorem, that $F_{1/2}'(x)=\infty$ if and only if
\begin{equation}
\lim_{n\to\infty}\frac{3^{z(n)}}{2^{z(n+1)}}=\lim_{n\to\infty}\frac{3^{t(n)}}{2^{t(n+1)}}=\infty,
\label{eq:Eidswick-condition}
\end{equation}
where $z(n)$ denotes the position of the $n$th 0, and $t(n)$ the position of the $n$th 2, in the ternary expansion of $x$. (Of course, Eidswick did not use the notation $F_{1/2}$.) The equivalence of \eqref{eq:Cantor-function-condition} and \eqref{eq:Eidswick-condition} can be seen by taking logarithms in \eqref{eq:Eidswick-condition} and using the relationships $z(n+1)-z(n)=r_{z(n)}(2)-1$, $t(n+1)-t(n)=r_{t(n)}(0)-1$. We derive (ii) here quickly as a special case of (i).

\begin{remark} \label{rem:simpler-condition}
{\rm
Since $(3a)^n\to\infty$ when $a>1/2$, it is sufficient for \eqref{eq:main-condition} that 
\begin{equation*}
\limsup_{n\to\infty} \sum_{k=1}^\infty a^{k}\delta_d(\xi_{n+k})<1, \qquad d=0,2,
\end{equation*}
and necessary that $\limsup_{n\to\infty} \sum_{k=1}^\infty a^{k}\delta_d(\xi_{n+k})\leq 1$ for $d=0,2$. An interesting question, which the author has been unable to answer, is whether there exist values of $a$ and ternary sequences $\{\xi_n\}$ such that $\limsup_{n\to\infty} \sum_{k=1}^\infty a^{k}\delta_d(\xi_{n+k})=1$ but \eqref{eq:main-condition} holds for $d=0$ or $d=2$.
}
\end{remark}

\begin{example}
{\rm
Let $x=0.022(02)022(02)^2 022(02)^3 \dots 022(02)^n \dots$. Then
\begin{equation*}
\limsup_{n\to\infty} \sum_{k=1}^\infty a^{k}\delta_2(\xi_{n+k})=a+a^2+a^4+a^6+\dots=a+\frac{a^2}{1-a^2},
\end{equation*}
and this is less than $1$ if and only if $a+2a^2-a^3<1$. On the other hand,
\begin{equation*}
\limsup_{n\to\infty} \sum_{k=1}^\infty a^{k}\delta_0(\xi_{n+k})=a+a^3+a^5+\dots=\frac{a}{1-a^2}<a+\frac{a^2}{1-a^2}.
\end{equation*}
Hence, the condition for $d=2$ is more stringent. Let $a^*(x)\approx .5550$ be the unique root in $(0,1)$ of $a+2a^2-a^3=1$. By Remark \ref{rem:simpler-condition}, $F_a'(x)=\infty$ for $1/2<a<a^*(x)$, but $x\not\in \DD_\infty(a)$ when $a>a^*(x)$, despite the fact that $f_n'(x)=(3a)^n\to\infty$ for every $a>1/3$. (In fact, $F_a'(x)=\infty$ for $1/3<a<a^*(x)$. For $1/3<a<1/2$ this follows from Proposition \ref{prop:zero-derivative}(ii), and for $a=1/2$ it follows from Theorem \ref{thm:main}(ii), since $r_n(d)\leq 2$ for all $n$ and $d\in\{0,2\}$). The author conjectures that $F_a'(x)=\infty$ also when $a=a^*(x)$.
}
\end{example}

We next examine the size of $\DD_\infty(a)$ for $1/2<a<1$. Let $\rho:=(\sqrt{5}-1)/2\approx .6180$ be the golden ratio, and recall that the {\em Thue-Morse sequence} is the sequence $(t_j)_{j=0}^\infty$ of $0$'s and $1$'s given by $t_j=s_j \mod 2$, where $s_j$ is the number of $1$'s in the binary representation of $j$. Thus,
\begin{equation}
(t_j)_{j=0}^\infty=0110\ 1001\ 1001\ 0110\ 1001\ 0110\ 0110\ 1001\ \dots
\label{eq:Thue-Morse}
\end{equation}
Let $\hat{a}\approx .5598$ be the unique root in $(0,1)$ of the equation $\sum_{j=1}^\infty t_j a^j=1$. The reciprocal of $\hat{a}$ is known as the {\em Komornik-Loreti constant}, introduced in \cite{KomLor}. 

\begin{theorem} \label{cor:golden-ratio}
The set $\DD_\infty(a)$ is:
\begin{enumerate}[(i)]
\item empty if $a\geq\rho$;
\item countably infinite, containing only rational points, if $\hat{a}<a<\rho$;
\item uncountable with strictly positive Hausdorff dimension if $1/2<a<\hat{a}$.
\end{enumerate}
Moreover, on the interval $1/2<a<\hat{a}$, the function $a\mapsto \dim_H\DD_\infty(a)$ is continuous and nonincreasing in the manner of a devil's staircase; that is, there is a countable family $(I_j)_{j\in\NN}$ of disjoint subintervals of $(1/2,\hat{a})$ whose union has full Lebesgue measure in $(1/2,\hat{a})$ and such that $\dim_H\DD_\infty(a)$ is constant on $I_j$ for each $j$.
\end{theorem}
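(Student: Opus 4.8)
The plan is to translate each of the three regimes into a statement about $\beta$-expansions with $\beta=1/a$, and then invoke the known structure theory. First I would dispose of the easy combinatorial observation underlying all three parts: by Theorem \ref{thm:main}(i) and Remark \ref{rem:simpler-condition}, a point $x$ (with $N_1(x)<\infty$) lies in $\DD_\infty(a)$ essentially when the tail of its ternary expansion, read after the last $1$, consists of two interleaved blocks of $0$'s and $2$'s, each of which — regarded as a $0$-$1$ sequence by the substitution $0\mapsto 1,2\mapsto 0$ (resp. $2\mapsto 1, 0\mapsto 0$) — has all its shifts summing (with weights $a^k$) to at most $1$. Writing $\beta=1/a$, the condition ``$\sum_{k\ge 1} a^k\delta_d(\xi_{n+k})\le 1$ for all $n$'' is exactly the statement that the corresponding binary sequence is a \emph{$\beta$-expansion of a number in $[0,1]$ that is lexicographically dominated by the quasi-greedy expansion of $1$}; equivalently it lies in the set governed by Parry's condition. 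This is where the cited results \cite{Parry,GlenSid,JSS} enter.

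For part (i): when $a\ge\rho$, i.e. $\beta=1/a\le(\sqrt5+1)/2$ is at most the golden ratio, there is no nonzero binary sequence all of whose shifts are Parry-admissible for such small $\beta$ — concretely, the constraint forces, after finitely many steps, \emph{both} the run of $0$'s and the run of $2$'s in the tail to be bounded by a constant, and then the factor $(3a)^n$ in \eqref{eq:main-condition} still diverges but the bracket $(1-\sum a^k\delta_d)$ stays bounded away from $0$ only if the tail is \emph{eventually constant}; a tail that is eventually all $0$'s or all $2$'s makes one of the two conditions in \eqref{eq:main-condition} fail outright (since then $\sum_{k}a^k\delta_d(\xi_{n+k})=a/(1-a)\ge 1$ when $a\ge 1/2$, with equality only at $a=1/2$). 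Hence $\DD_\infty(a)=\emptyset$. For part (ii), $\hat a<a<\rho$: here $\beta$ lies strictly between the golden ratio and the Komornik-Loreti constant, and the Komornik-Loreti theorem \cite{KomLor} together with \cite{GlenSid,JSS} tells us that the set of $\beta$ for which $1$ has a \emph{unique} expansion begins at the Komornik-Loreti constant; below it (i.e. for our $a$ above $\hat a$) the only admissible tails are \emph{eventually periodic}, and a short analysis shows the admissible eventually-periodic tails are exactly those eventually equal to $(20)^\infty$ or $(02)^\infty$ up to the interleaving — producing a point $x$ that is rational. Counting these (they are parametrised by the finite initial ternary block and the finite position of the last $1$) gives a countably infinite set, all rational. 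For part (iii), $1/2<a<\hat a$: now $\beta>$ the Komornik-Loreti constant, so by \cite{GlenSid} the set of admissible tails has positive topological entropy, hence the associated set of $x$'s has positive Hausdorff dimension; a mass-distribution / subsystem-of-finite-type argument (Bernoulli measure supported on a large enough subshift of admissible interleaved tails, pushed to $[0,1]$ via the ternary coding, which is bi-Lipschitz off a null set) yields $\dim_H\DD_\infty(a)>0$.

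For the final ``devil's staircase'' assertion, I would set up a monotone correspondence $a\mapsto\beta=1/a$ and express $\dim_H\DD_\infty(a)$ as (a constant multiple of) the Hausdorff dimension of the \emph{univoque-type set} associated to $\beta$ — the set of sequences all of whose shifts are dominated by the quasi-greedy expansion of $1$ in base $\beta$. The recent results of Kong and Li \cite{KongLi} and Komornik–Kong–Li \cite{KKL} state precisely that this dimension function, as a function of $\beta$, is continuous and is \emph{locally constant on an open dense set of full measure} — a devil's staircase — on the relevant range of $\beta$ (namely $\beta$ above the Komornik-Loreti constant). Pulling this back through the decreasing, smooth reparametrisation $a=1/\beta$ preserves continuity, reverses monotonicity (so that $a\mapsto\dim_H\DD_\infty(a)$ is \emph{nonincreasing}), and carries the plateau intervals $I_j$ in $\beta$ to plateau intervals in $a$ whose union still has full Lebesgue measure. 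The step I expect to be the real obstacle is the clean identification of $\DD_\infty(a)$ with the $\beta$-expansion object up to a bi-Lipschitz coding \emph{and} the verification that the mild failure modes flagged in Remark \ref{rem:simpler-condition} (the borderline case $\limsup\sum a^k\delta_d(\xi_{n+k})=1$ with \eqref{eq:main-condition} nonetheless holding) do not change the Hausdorff dimension; since that borderline corresponds to a measure-zero, dimension-zero set of exceptional tails — essentially the $\beta$-expansion of $1$ itself and its shifts — it can only add a null, lower-dimensional set, so the dimension computation is unaffected, but this argument needs to be made carefully rather than waved through.
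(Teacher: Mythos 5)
Your overall route is the paper's route: translate membership in $\DD_\infty(a)$ via Theorem \ref{thm:main}(i) into a lexicographic/Parry condition on the tail, and then quote Glendinning--Sidorov, Jordan--Shmerkin--Solomyak, Kong--Li and Komornik--Kong--Li. But the step you yourself flag as ``the real obstacle'' is resolved incorrectly in your sketch, and it is precisely where the substance of the proof lies. The necessary condition coming from \eqref{eq:main-condition} is that every shifted tail sum be $<1$ (this puts the tail in the set $\UU_a$ of the paper), while the condition you can actually verify to force $F_a'(x)=\pm\infty$ is the uniform bound $\limsup_n\sum_k a^k\delta_d(\xi_{n+k})<1$ (the set $\widetilde{\UU}_a$). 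The difference $\UU_a\setminus\widetilde{\UU}_a$ is \emph{not} ``essentially the $\beta$-expansion of $1$ and its shifts'': it is the set of all admissible sequences whose orbit under the shift accumulates on the boundary $\{\Pi_a(\cdot)=1\}$, and a priori this can be a large set; no elementary measure-zero argument disposes of it. The paper closes this gap by a squeeze: Lemma \ref{lem:JSS-sandwich} gives $\UU_{\lambda}\supset\widetilde{\UU}_{\lambda}\supset\UU_{\lambda'}$ for $\lambda<\lambda'$, and the continuity of $\lambda\mapsto\dim_H\Pi_\lambda(\UU_\lambda)$ from \cite{KKL} (Lemma \ref{lem:dimension-continuity}) then forces $\dim\widetilde{\UU}_\lambda=\dim\UU_\lambda$. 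Without this (or an equivalent argument) you get positivity of the dimension in part (iii) from your subshift construction, but not the exact identification of $\dim_H\DD_\infty(a)$ with the dimension of the univoque set, and hence not the devil's staircase statement, which is pulled back from that identification.

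Two further concrete errors. In part (i), for $a\geq\rho$ the inequality $a+a^2\geq1$ forbids any two consecutive equal digits $dd$ in the tail infinitely often, so the surviving tails are the \emph{alternating} ones $(20)^\infty$, not ``eventually constant'' ones; for those the relevant sum is $a+a^3+a^5+\cdots=a/(1-a^2)\geq1$ exactly when $a\geq\rho$ (your quantity $a/(1-a)$ pertains to constant tails, which are already excluded). In part (ii), the admissible tails for $\hat{a}<a<\rho$ are \emph{not} only those ending in $(20)^\infty$: by \cite[Proposition 13]{GlenSid} they are the sequences ending in $(v_m\bar{v}_m)^\infty$ where $v_m$ is a prefix of the Thue--Morse sequence, and for $a$ close to $\hat{a}$ arbitrarily long Thue--Morse blocks occur. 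Your conclusion (countably infinite, all rational) survives because every such tail is eventually periodic, but the ``short analysis'' you invoke would prove something false. Both of these are repairable, but as written they are wrong statements, not just omissions.
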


This result is a consequence of Theorem \ref{thm:main} and the literature on $\beta$-expansions of real numbers \cite{GlenSid,JSS,KKL,KongLi,Parry}. The idea is that the set $\DD_\infty(a)$ is very closely related to the set of points which have a unique $\beta$-expansion, where $\beta=1/a$. To give the reader a flavor of the arguments, we show here that $\DD_\infty(a)\neq\emptyset$ if and only if $a<\rho$ and $a\neq 1/3$. The remainder of Theorem \ref{cor:golden-ratio} is proved in Section \ref{sec:beta-expansions}.

\begin{proof}[Partial proof of Theorem \ref{cor:golden-ratio}]
Suppose $a\geq\rho$. Then $a+a^2\geq 1$, so condition \eqref{eq:main-condition} clearly fails if the ternary expansion of $x$ contains either $00$ or $22$ infinitely often. This leaves points with ternary expansions ending in $(20)^\infty$. But for such points, 
\begin{equation*}
\sum_{k=1}^\infty a^{k}\delta_2(\xi_{n+k})=a+a^3+a^5+\dots=\frac{a}{1-a^2}\geq 1
\end{equation*}
for infinitely many $n$, so \eqref{eq:main-condition} fails again.

On the other hand, if $a<\rho$, then $a/(1-a^2)<1$, and so any point $x$ whose ternary expansion ends in $(20)^\infty$ satisfies \eqref{eq:main-condition} in view of Remark \ref{rem:simpler-condition}.
\end{proof}

\begin{remark}
{\rm
{\em (a)} In fact, a fairly explicit description of points in $\DD_\infty(a)$ can be given when $\hat{a}<a<\rho$. For example, if $a$ is such that $a+a^2<1\leq a+a^2+a^4$, then $\DD_\infty(a)$ consists {\em exactly} of those points whose ternary expansion ends in $(20)^\infty$, as ternary expansions containing one of the words $222$, $000$, $2202$ or $0020$ infinitely often will be forbidden, as are expansions ending in $(2200)^\infty$. This simple combinatorial idea illustrates Theorem \ref{cor:golden-ratio}(ii); we elaborate on it further in the proof of Theorem \ref{cor:golden-ratio}, in Section \ref{sec:beta-expansions}.

{\em (b)} It is unclear whether $\DD_\infty(\hat{a})$ is countable or uncountable, but it will be shown in Remark \ref{rem:critical-value} that its Hausdorff dimension is zero.

{\em (c)} It is interesting to observe that, for $\rho\leq a<2/3$, $F_a$ has a finite derivative at infinitely many points but an infinite derivative nowhere.
}
\end{remark}

To end this section, we mention that triadic rational points in $(0,1)$ (i.e. points in the set $\TT:=\{j/3^n: n\in\NN,\ j=1,2,\dots,3^n-1\}$) are of some special interest. At such points, depending on the value of $a$, $F_a$ may have a vanishing derivative, an infinite derivative, a cusp, or a ``cliff" (with one one-sided derivative equal to zero and the other equal to $\infty$): 

\begin{proposition} \label{thm:triadic}
Let $x\in\mathcal{T}$.
\begin{enumerate}[(i)]
\item If $1/2<a<1$, then $F_a$ has a cusp at $x$; that is, either $F_a^+(x)=-F_a^-(x)=\infty$ (if $N_1(x)$ is even), or $F_a^+(x)=-F_a^-(x)=-\infty$  (if $N_1(x)$ is odd).
\item If $a=1/2$, then either $F_a'(x)=0$, or $F_a^+(x)=\infty$ and $F_a^-(x)=0$, or $F_a^+(x)=0$ and $F_a^-(x)=\infty$.
\item If $1/3<a<1/2$, then $F_a'(x)=\infty$.
\item If $0<a<1/3$, then $F_a'(x)=0$.
\end{enumerate}
Moreover, %at the endpoints of $[0,1]$ we have
\begin{equation}
F_a^+(0)=F_a^-(1)=\begin{cases}
\infty & \mbox{if $a>1/3$},\\
0 & \mbox{if $a<1/3$}.
\end{cases}
\label{eq:endpoints}
\end{equation}
\end{proposition}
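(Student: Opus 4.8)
The plan is to reduce every one-sided derivative of $F_a$ at a triadic point to the derivatives at the endpoints $0$ and $1$, using the iterated self-affinity of the graph. First I would record the basic scaling identity: for every $n\in\NN$ and $j\in\{0,\dots,3^n-1\}$, if $\xi_1,\dots,\xi_n$ are the ternary digits of $j/3^n$ and $i(n)$ is the number of $1$'s among them, then
\[
F_a\!\left(\tfrac{j}{3^n}+\tfrac{t}{3^n}\right)=F_a\!\left(\tfrac{j}{3^n}\right)+a^{n-i(n)}(1-2a)^{i(n)}\,F_a(t),\qquad t\in[0,1].
\]
This follows by induction on $n$ from the one-step self-affinity described in the introduction: the case $n=1$ is read off directly, and the inductive step applies that case inside each of the three middle-third subintervals, the extra factor $a$ or $1-2a$ exactly accounting for the change in $i(n)$. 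By \eqref{eq:fn-slope} the vertical factor above equals $3^{-n}f_n^+(j/3^n)$; what will matter is that it is negative precisely when $a>1/2$ and $i(n)$ is odd, and that it vanishes precisely when $a=1/2$ and $i(n)\ge1$.

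Next I extract the reduction. Fix a triadic $x=j/3^n\in[0,1)$, taken with $3\nmid j$ so that $\xi_n\ne0$, and put $p:=a^{n-i(n)}(1-2a)^{i(n)}$. If $p\ne0$, then for $0<h<3^{-n}$ the identity gives $\bigl(F_a(x+h)-F_a(x)\bigr)/h=3^np\cdot F_a(3^nh)/(3^nh)$; since $s=3^nh$ is a genuine reparametrization of a one-sided neighborhood of $0$, letting $h\downarrow0$ yields $F_a^+(x)=f_n^+(x)\,F_a^+(0)$, each side existing in the extended sense if the other does. If $p=0$ (possible only for $a=1/2$, $i(n)\ge1$), the identity shows $F_a$ is constant just to the right of $x$, so $F_a^+(x)=0$. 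For the left-hand derivative I would invoke the symmetry $F_a(1-x)=1-F_a(x)$, a one-line induction from the symmetry of the defining relations, which gives $F_a^-(x)=F_a^+(1-x)$ and $F_a^-(1)=F_a^+(0)$; combined with the ternary identity $1-x=0.(2-\xi_1)\cdots(2-\xi_{n-1})(3-\xi_n)$ (valid since $\xi_n\in\{1,2\}$), this reduces $F_a^-(x)$ to $F_a^+(0)$ as well, the crucial point being that then $N_1(1-x)=N_1(x)\pm1$ has parity opposite to $N_1(x)$.

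Now I would prove the endpoint formula \eqref{eq:endpoints}. Taking $j=0$ above, for $3^{-(m+1)}<h\le 3^{-m}$ one has $F_a(h)=a^mF_a(3^mh)$ with $3^mh\in(1/3,1]$; since $F_a$ takes values in $[\mu,1]$ on $[1/3,1]$ with $\mu:=\min\{a,1-a\}>0$, this gives $\mu(3a)^m\le F_a(h)/h\le 3(3a)^m$. As $m\to\infty$ when $h\downarrow0$, the left bound forces $F_a^+(0)=\infty$ for $a>1/3$ and the right bound (with $F_a\ge0$) forces $F_a^+(0)=0$ for $a<1/3$; and $F_a^-(1)=F_a^+(0)$ by the symmetry. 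The four parts then follow. Parts (iii) and (iv) in fact need only Proposition \ref{prop:zero-derivative}: for triadic $x\in(0,1)$ one has $f_n^+(x)=\bigl((1-2a)/a\bigr)^{N_1(x)}(3a)^n$ for $n$ large, which tends to $+\infty$ when $1/3<a<1/2$ (so $F_a'(x)=\infty$) and to $0$ when $0<a<1/3$ (so $F_a'(x)=0$). For (i), with $1/2<a<1$ we have $F_a^+(0)=\infty$ and $\sgn f_n^+(x)=(-1)^{N_1(x)}$, so the reduction gives $F_a^+(x)=\infty$ if $N_1(x)$ is even and $-\infty$ if $N_1(x)$ is odd, while $F_a^-(x)=F_a^+(1-x)$ carries the opposite sign because $N_1(1-x)$ has opposite parity; this is the asserted cusp. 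For (ii), $a=1/2$: here $F_a^+(x)=0$ if $N_1(x)\ge1$ and $F_a^+(x)=\infty$ if $N_1(x)=0$, and similarly $F_a^-(x)=0$ if $N_1(1-x)\ge1$ and $F_a^-(x)=\infty$ if $N_1(1-x)=0$; since $N_1(x)=0$ forces $\xi_n=2$ (hence $N_1(1-x)=1$) and $N_1(1-x)=0$ forces $N_1(x)=1$ with the unique $1$ in position $n$, the only possibilities are $F_a'(x)=0$, or $F_a^+(x)=\infty,\ F_a^-(x)=0$, or $F_a^+(x)=0,\ F_a^-(x)=\infty$.

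The argument is largely bookkeeping, and the two places that demand care are: the sign of the vertical scaling factor in the basic identity when $a>1/2$ — it is precisely this sign flip, one per digit $1$, that converts an infinite one-sided derivative into a cusp rather than leaving a one-sided infinity on each side — and, in the case $a=1/2$, the need to isolate ``$F_a$ locally constant'' as an alternative genuinely distinct from the indeterminate form ``$0\cdot\infty$''. One should also verify explicitly that $s=3^nh$ is a reparametrization of a one-sided neighborhood of $0$, so that the reduction produces an honest one-sided derivative and not merely a limit along the triadic subsequence $h=3^{-m}$.
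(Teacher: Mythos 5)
Your proof is correct, but it takes a genuinely more self-contained route than the paper's. The paper disposes of part (i) by citing the self-affinity argument from the proof of Theorem \ref{thm:main} together with Lemma \ref{lem:infinite-right-derivative}: for triadic $x$ the tail of the ternary expansion is all $0$'s, so the sum in \eqref{eq:d-is-2} vanishes and the condition reduces to $(3a)^n\to\infty$; the opposite signs of the two one-sided derivatives are then read off from \eqref{eq:consecutive-slopes-ratio}. Part (ii) is handled via the Cantor-set structure (constancy on removed intervals, plus Lemma \ref{lem:infinite-right-derivative} at their endpoints), and parts (iii)--(iv) via Proposition \ref{prop:zero-derivative}, exactly as you do. Your argument instead pushes the self-affine reduction all the way down to the endpoint $0$ and computes $F_a^+(0)$ from scratch via the sandwich $\mu(3a)^m\le F_a(h)/h\le 3(3a)^m$ on $3^{-(m+1)}<h\le 3^{-m}$, thereby bypassing Lemma \ref{lem:infinite-right-derivative} entirely; this is a genuine simplification for triadic points, since that lemma is the technically delicate part of the paper, though it works only because the reduction terminates exactly at $0$, whereas the lemma covers arbitrary points with digits in $\{0,2\}$. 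Your sign bookkeeping via $N_1(1-x)=N_1(x)\pm1$ is equivalent to the paper's appeal to \eqref{eq:consecutive-slopes-ratio}, and your explicit proof of \eqref{eq:endpoints} fills in a step the paper dismisses as ``essentially the same arguments.'' Incidentally, your scaling identity is the correct form of the display in the proof of Theorem \ref{thm:main}: the vertical factor there should read $3^{-n_0}f_{n_0}^+(x)$, which is precisely your $a^{n-i(n)}(1-2a)^{i(n)}$ by \eqref{eq:fn-slope}.
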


The remainder of this article is organized as follows. Proposition \ref{prop:zero-derivative}, Theorem \ref{thm:main} and Proposition \ref{thm:triadic} are proved in Section \ref{sec:proof}. In Section \ref{sec:dimensions} we compute the Hausdorff dimensions of $\DD_0(a)$ and $\mathcal{N}(a)$, and that of $\DD_\infty(a)$ for $0<a\leq 1/2$. In Section \ref{sec:beta-expansions} we review basic facts about $\beta$-expansions and prove Theorem \ref{cor:golden-ratio}. Finally, in Section \ref{sec:rational}, we simplify the condition \eqref{eq:main-condition} for the case of rational $x$, using ideas from Section \ref{sec:beta-expansions}.

\section{Vanishing and infinite derivatives} \label{sec:proof}

In this section we prove Proposition \ref{prop:zero-derivative}, Theorem \ref{thm:main} and Proposition \ref{thm:triadic}.
We use two key observations. First, for any triadic interval $[u_n,v_n]=[j/3^n,(j+1)/3^n]$ (where $n\in\NN$ and $j=0,1,\dots,3^n-1$),
\begin{equation}
u_n\leq x\leq v_n \quad\Rightarrow \quad \min\{F_a(u_n),F_a(v_n)\}\leq F_a(x)\leq\max\{F_a(u_n),F_a(v_n)\}.
\label{eq:graph-inside-rectangle}
\end{equation}
Second, if $a\neq 1/2$ and $s_{n,j}$ denotes the slope of $f_n$ on $[j/3^n,(j+1)/3^n]$, then
\begin{equation}
\frac{s_{n,j+1}}{s_{n,j}}\in\left\{\frac{a}{1-2a},\frac{1-2a}{a}\right\}, \qquad j=0,1,\dots,3^n-1.
\label{eq:consecutive-slopes-ratio}
\end{equation}
This can be checked by using an inductive argument. 

\begin{proof}[Proof of Proposition \ref{prop:zero-derivative}]
%(i) Let $a=1/2$. By \eqref{eq:fn-slope}, $f_n^+(x)\to 0$ if and only if $f_n^+(x)=0$ for some $n$. But then  $F_{1/2}$ is constant in a right neighborhood of $x$, so $F_{1/2}^+(x)=0$. The second statement follows by symmetry.
(i) Fix $a\in(0,1)\backslash \{1/2\}$, and suppose $f_n^+(x)\to 0$. Given $h>0$, let $n$ be the integer such that $3^{-n-1}<h\leq 3^{-n}$. Let $u_n=(j-1)/3^n$, $v_n=j/3^n$ and $w_n=(j+1)/3^n$, where $j\in\ZZ$ and $u_n\leq x<v_n$. Then $x+h<w_n$, so a double application of \eqref{eq:graph-inside-rectangle} gives
\begin{align*}
|F_a(x+h)-F_a(x)|&\leq |F_a(v_n)-F_a(u_n)|+|F_a(w_n)-F_a(v_n)|\\
&=3^{-n}|f_n^+(x)|+3^{-n}|f_n^+(v_n)|\leq 3^{-n}(1+C)|f_n^+(x)|,
\end{align*}
where $C=\max\{a/|2a-1|,|2a-1|/a\}$, and the last inequality follows from \eqref{eq:consecutive-slopes-ratio}. Since $h>3^{-n-1}$, we obtain
\begin{equation*}
\left|\frac{F_a(x+h)-F_a(x)}{h}\right|\leq 3(1+C)|f_n^+(x)|,
\end{equation*}
and hence, by Proposition \ref{prop:zero-or-infinite}, $F_a^+(x)=0$. Now \eqref{eq:consecutive-slopes-ratio} implies that $f_n^-(x)\to 0$ as well, so by symmetry, $F_a^-(x)=0$. Thus, $F_a'(x)=0$.

(ii) The second statement follows from the more general result below by taking $K=3$ and $C=\max\{a/(1-2a),(1-2a)/a\}$.
%In the case of $F_a$ with $0<a<1/2$, the sequence $\{f_n\}$ satisfies the hypotheses of the above lemma with $K=3$ and $C=\max\{a/(1-2a),(1-2a)/a\}$. Thus, $F_a'(x)=\infty$ whenever $f_n^+(x)\to\infty$.
\end{proof}

\begin{lemma} \label{lem:when-infinite-derivative}
Let $K>1$ be an integer. Let $\{g_n\}$ be a sequence of strictly increasing continuous functions on $[0,1]$ such that (i) $g_n$ is linear in $(j/K^n,(j+1)/K^n)$ for $j=0,1,\dots,K^n-1$; (ii) $g_{n+1}(j/K^n)=g_n(j/K^n)$ for all $n\in\NN$ and $j\in\{0,1,\dots,K^n\}$; and (iii) $g_n$ converges pointwise in $[0,1]$ to a function $g$. Let $s_{n,j}:=g_n^+(j/K^n)$, and suppose there is a constant $C>1$ such that
\begin{equation}
C^{-1}\leq\frac{s_{n,j+1}}{s_{n,j}}\leq C \qquad \mbox{for all $n$ and all $j$}.
\label{eq:uniform-boundedness}
\end{equation}
Then for $x\in(0,1)$, $g'(x)=\infty$ if and only if $g_n^+(x)\to\infty$.
\end{lemma}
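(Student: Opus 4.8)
The plan is to establish both implications directly from the definition of $g'(x)=\infty$, relying on two preliminary observations. First, iterating hypothesis (ii) gives $g(j/K^n)=\lim_{k\to\infty}g_{n+k}(j/K^n)=g_n(j/K^n)$, so $g$ coincides with $g_n$ at every $K$-adic point of level $\le n$; second, being a pointwise limit of increasing functions, $g$ is nondecreasing. I will also use that, if $x$ lies in the interior or at the left endpoint of the level-$m$ interval indexed by $j=j(m)$, then $g_m^+(x)=s_{m,j}$ and $g\big((j+1)/K^m\big)-g\big(j/K^m\big)=K^{-m}s_{m,j}$.

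For the implication ``$g'(x)=\infty\Rightarrow g_n^+(x)\to\infty$'', I would fix $M$, choose $\delta>0$ with $\big(g(x+h)-g(x)\big)/h>M$ for $0<|h|<\delta$, and take $n$ with $K^{-n}<\delta$. With $j=j(n)$, applying the difference-quotient inequality at the increments $h=(j+1)/K^n-x>0$ and (when $x$ is not $K$-adic) $h=j/K^n-x<0$, then adding the two inequalities and cancelling the common term $g(x)$, yields $K^{-n}s_{n,j}>MK^{-n}$, i.e. $g_n^+(x)>M$. When $x$ is $K$-adic, for all sufficiently large $n$ only the right increment $h=K^{-n}$ is available, and it suffices. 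Letting $n\to\infty$ gives the claim.

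The substantial direction is ``$g_n^+(x)\to\infty\Rightarrow g'(x)=\infty$''. For small $h>0$ I would let $m=m(h)$ be the \emph{least} level such that $[x,x+h]$ contains a full $K$-adic interval $[\ell/K^m,(\ell+1)/K^m]$ of that level (such $m$ exist: any $m$ with $K^{-m}\le h/2$ works, since an interval of length $\ge 2K^{-m}$ always contains a level-$m$ interval). Containment forces $K^{-m}\le h$, so $m\to\infty$ as $h\downarrow 0$; minimality forces $[x,x+h]$ to contain no level-$(m-1)$ interval, hence $h<2K^{-(m-1)}=2K\cdot K^{-m}$. Writing $j=j(m)$, the inclusions $x\le\ell/K^m$ and $(\ell+1)/K^m\le x+h<x+2K\cdot K^{-m}$ pin the index gap down to $0\le\ell-j\le 2K$, so \eqref{eq:uniform-boundedness} gives $s_{m,\ell}\ge C^{-2K}s_{m,j}=C^{-2K}g_m^+(x)$. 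Using that $g$ is nondecreasing and agrees with $g_m$ at the endpoints of $[\ell/K^m,(\ell+1)/K^m]$,
\[
g(x+h)-g(x)\ \ge\ g\big((\ell+1)/K^m\big)-g\big(\ell/K^m\big)\ =\ K^{-m}s_{m,\ell}\ \ge\ K^{-m}C^{-2K}g_m^+(x),
\]
whence $\big(g(x+h)-g(x)\big)/h\ge(2K)^{-1}C^{-2K}g_m^+(x)\to\infty$. This gives $g^+(x)=\infty$; the left-hand derivative follows by the mirror argument on subintervals of $[x-h,x]$, again using \eqref{eq:uniform-boundedness} to bound the relevant slope below by a fixed multiple of $g_m^+(x)$. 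Together these yield $g'(x)=\infty$.

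The hard part will be the bookkeeping in this last direction: the $K$-adic subinterval that happens to sit inside $[x,x+h]$ is generally \emph{not} the one containing $x$, so its slope $s_{m,\ell}$ must be compared with $g_m^+(x)=s_{m,j}$ via \eqref{eq:uniform-boundedness}, and one must ensure the index gap $|\ell-j|$ stays bounded by an absolute constant (here $2K$), so that only a fixed factor $C^{-2K}$ is lost rather than one growing with $m$; the minimal choice of $m$ is precisely what secures this while simultaneously keeping $K^{-m}/h$ bounded below. The only remaining care is the routine separate handling of $K$-adic $x$, where one-sided increments replace the symmetric pair used above.
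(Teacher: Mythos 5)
Your proof is correct and follows essentially the same route as the paper's: both directions rest on the facts that $g$ agrees with $g_n$ at level-$n$ $K$-adic points, that $g$ is nondecreasing, and that \eqref{eq:uniform-boundedness} lets one compare the slope of a $K$-adic interval contained in $[x,x+h]$ with $g_n^+(x)$ at the cost of a bounded constant. The only cosmetic difference is in the bookkeeping for the hard direction: the paper simply drops to level $n+2$ (where $K^{-n-1}<h\le K^{-n}$) and uses the level-$(n+2)$ interval immediately to the right of $x$, losing only a factor $C^{-1}$, whereas your ``minimal level $m$'' choice loses $C^{-2K}$ — both suffice.
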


\begin{proof}
Fix $x\in(0,1)$ and suppose $g_n^+(x)\to\infty$. Given $h>0$, let $n\in\NN$ such that $K^{-n-1}<h\leq K^{-n}$, and let $j$ be the integer such that $(j-1)/K^{n+2}<x\leq j/K^{n+2}$. Then $x+h>(j+1)/K^{n+2}$, and since $g$ is nondecreasing,
\begin{align*}
g(x+h)-g(x)&\geq g\left(\frac{j+1}{K^{n+2}}\right)-g\left(\frac{j}{K^{n+2}}\right)\\
&=K^{-(n+2)}g_{n+2}^+\left(\frac{j}{K^{n+2}}\right)\geq C^{-1}K^{-(n+2)}g_{n+2}^+(x),
\end{align*}
so that
\begin{equation*}
\frac{g(x+h)-g(x)}{h}\geq K^n\big(g(x+h)-g(x)\big)\geq C^{-1}K^{-2}g_{n+2}^+(x).
\end{equation*}
This shows that $g^+(x)=\infty$. Since \eqref{eq:uniform-boundedness} implies that $g_n^-(x)\geq C^{-1}g_n^+(x)$ for all $n$, a similar argument gives $g^-(x)=\infty$. Thus, $g'(x)=\infty$. 

Conversely, suppose $g'(x)=\infty$. By hypothesis (ii) of the lemma, $g(j/K^n)=g_n(j/K^n)$ for all $n\in\NN$ and $j\in\{0,1,\dots,K^n\}$. Therefore, $g_n^+(x)$ must tend to $\infty$.
\end{proof}

The next lemma and its proof represent the core of the investigation of the infinite derivatives of $F_a$.

\begin{lemma} \label{lem:infinite-right-derivative}
Let $1/2\leq a<1$. Let $x\in[0,1)$ with ternary expansion $\{\xi_n\}$ and assume $\xi_n\in\{0,2\}$ for each $n$. Then $F_a^+(x)=\infty$ if and only if
\begin{equation}
(3a)^n\left[1-\sum_{k=1}^\infty a^k\delta_2(\xi_{n+k})\right]\to\infty.
\label{eq:d-is-2}
\end{equation}
\end{lemma}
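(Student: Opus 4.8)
The plan is to work directly with the graph-inside-rectangle bound \eqref{eq:graph-inside-rectangle} and to compute precisely how far $F_a(x+h)$ can exceed $F_a(x)$ when $x$ has a ternary expansion over the alphabet $\{0,2\}$. Since $\xi_n(x)\in\{0,2\}$ for all $n$, the point $x$ lies in the Cantor set $\CC$, and each triadic interval containing $x$ at level $n$ is of the form $[x_n, x_n + 3^{-n}]$ where $x_n = \sum_{j=1}^n \xi_j/3^j$ is the $n$th truncation of $x$. The key quantity is $F_a(x_n + 3^{-n}) - F_a(x_n)$, which by self-affinity equals $f_n^+(x_n)$ scaled by $3^{-n}$; and since $x\in\CC$ we have $i(n;x)=0$, so by \eqref{eq:fn-slope} this increment is $3^{-n}\cdot 3^n a^n = a^n$ — except that the sign and the actual position of $F_a(x)$ within the rectangle depend on whether $\xi_{n+1}=0$ or $2$. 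More precisely, $F_a(x)$ lies at "height" $\sum_{k\ge1} a^k\delta_2(\xi_{n+k})$ (measured in units of $a^n$) above $F_a(x_n)$, which is exactly why the correction term $1-\sum_k a^k\delta_2(\xi_{n+k})$ appears: it measures the remaining room above $F_a(x)$ inside the level-$n$ rectangle.

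**The two directions.** For sufficiency of \eqref{eq:d-is-2}: given $h>0$, pick $n$ with $3^{-n-1}<h\le 3^{-n}$ and locate $x$ in a level-$(n+2)$ triadic interval; arguing as in the proof of Lemma \ref{lem:when-infinite-derivative}, $F_a(x+h)-F_a(x)$ is at least the increase of $F_a$ from $x$ to the right endpoint of the relevant level-$(n+2)$ rectangle, which is comparable to $a^{n}\bigl(1-\sum_{k\ge1}a^k\delta_2(\xi_{n+k})\bigr)$ (up to a bounded factor coming from \eqref{eq:consecutive-slopes-ratio}, and up to finitely many extra $a$-factors for the two extra levels). Dividing by $h\le 3^{-n}$ gives a lower bound $\gtrsim (3a)^n\bigl(1-\sum_{k\ge1}a^k\delta_2(\xi_{n+k})\bigr)$, which tends to $\infty$ by hypothesis, so $F_a^+(x)=\infty$. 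For necessity: if \eqref{eq:d-is-2} fails, there is a sequence $n_j\to\infty$ along which $(3a)^{n_j}\bigl(1-\sum_k a^k\delta_2(\xi_{n_j+k})\bigr)$ stays bounded. Choose $h_j$ so that $x+h_j$ is the right endpoint of the level-$n_j$ triadic interval containing $x$; then by \eqref{eq:graph-inside-rectangle} applied to that rectangle, $F_a(x+h_j)-F_a(x)\le a^{n_j}\bigl(1-\sum_k a^k\delta_2(\xi_{n_j+k})\bigr)$ exactly (this is where being in $\CC$ and the self-affine structure make the bound sharp rather than merely up to constants), while $h_j\ge 3^{-n_j}$ up to a bounded factor depending on how deep one must go to reach the right endpoint — here one must be slightly careful, since the right endpoint of the level-$n_j$ interval may itself be $x$-adjacent; a cleaner choice is to take $h_j$ equal to the distance from $x$ to that right endpoint, note $h_j \asymp 3^{-n_j}$ because $\xi_{n_j+1}(x)=0$ for the relevant $n_j$ (those where the correction term is near its sup), and conclude $(F_a(x+h_j)-F_a(x))/h_j$ stays bounded, so $F_a^+(x)<\infty$.

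**Main obstacle.** The delicate point is the necessity direction: making the upper estimate on $F_a(x+h)-F_a(x)$ genuinely sharp, so that a bounded subsequence of $(3a)^n(1-\sum_k a^k\delta_2(\xi_{n+k}))$ forces $F_a^+(x)<\infty$ rather than merely failing to prove it is $\infty$. The issue is that $x+h$ for general small $h$ may reach into a triadic interval where $F_a$ climbs steeply, so one cannot just bound by the increment over a single level-$n$ rectangle; one must choose the test increments $h_j$ adapted to the subsequence $n_j$ where the correction term approaches its limsup, and verify that along such $n_j$ the digit $\xi_{n_j+1}$ is forced to be $0$ (otherwise $\delta_2(\xi_{n_j+1})=1$ contributes a full $a$ to the sum and the correction term is already small, handled separately). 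I expect the bookkeeping of which $n_j$ to use, and confirming $h_j\asymp 3^{-n_j}$ in each case, to be the crux; the sufficiency direction is a routine adaptation of Lemma \ref{lem:when-infinite-derivative} combined with the explicit formula for the height of $F_a(x)$ inside each dyadic-level rectangle. Once both one-sided estimates are in place, the equivalence with \eqref{eq:d-is-2} follows, and the symmetric statement for left derivatives and $d=0$ (needed for Theorem \ref{thm:main}) will be obtained by the obvious reflection $x\mapsto 1-x$, $a$ fixed, which swaps the roles of the digits $0$ and $2$.
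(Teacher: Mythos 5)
Your overall shape is right---test difference quotients along triadic endpoints and read off the position of $F_a(x)$ inside each basic rectangle from the digits---but the central computation is wrong, and the error sinks both directions. You take as test points the right endpoints $e_n:=x_n+3^{-n}$ of the level-$n$ triadic intervals \emph{containing} $x$, and you identify $1-\sum_{k\ge1}a^k\delta_2(\xi_{n+k})$ with the ``remaining room above $F_a(x)$'' in that rectangle. Using $F_a(x)=\sum_{k\ge1}a^{k-1}(1-a)\delta_2(\xi_k)$ (valid since all digits lie in $\{0,2\}$), one finds
\begin{equation*}
F_a(e_n)-F_a(x)=a^{n}-a^{n-1}(1-a)\sum_{k=1}^\infty a^k\delta_2(\xi_{n+k})
=a^{n-1}\Bigl[a-(1-a)\sum_{k=1}^\infty a^k\delta_2(\xi_{n+k})\Bigr],
\end{equation*}
which is \emph{not} $a^n\bigl[1-\sum_k a^k\delta_2(\xi_{n+k})\bigr]$: the two differ by $(2a-1)a^{n-1}$ (they coincide only at $a=1/2$), and for $a>1/2$ the first is always nonnegative because the sum is at most $a/(1-a)$. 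Concretely, if the tail of $x$ is $(20)^\infty$ then $F_a(e_n)-F_a(x)=\frac{a}{1+a}\,a^n$ while $e_n-x\asymp 3^{-n}$, so your difference quotients blow up like $(3a)^n$ for \emph{every} $a\in(1/2,1)$, even though for $a\ge\rho$ one has $\sum_k a^k\delta_2(\xi_{n+k})=a/(1-a^2)\ge 1$ infinitely often, so \eqref{eq:d-is-2} fails and, as the correct test points below show, $F_a^+(x)$ is in fact not $+\infty$. Hence these test points carry no information about \eqref{eq:d-is-2}; the ``exact'' upper bound in your necessity step is false, and the lower bound in your sufficiency step (which would prove $F_a^+(x)=\infty$ unconditionally for $a>1/2$) cannot be right either.

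The fix is to test at the right endpoint of the \emph{adjacent} interval: $x_n:=(j+1)/3^n$ where $(j-1)/3^n\le x<j/3^n$. By \eqref{eq:consecutive-slopes-ratio} the slope of $f_n$ on $[j/3^n,(j+1)/3^n]$ is $3^na^{n-1}(1-2a)\le 0$, so $F_a$ descends by $(2a-1)a^{n-1}$ past the peak, and one gets exactly $F_a(x_n)-F_a(x)=a^{n-1}(1-a)\bigl[1-\sum_k a^k\delta_2(\xi_{n+k})\bigr]$ with $1/3^n<x_n-x\le 2/3^n$; this quantity can be negative, and it is precisely what encodes \eqref{eq:d-is-2}. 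Necessity is then immediate, and the ``main obstacle'' you describe is illusory: $F_a^+(x)=\infty$ is a statement about the full limit as $h\downarrow 0$, so a single sequence of increments with non-divergent quotients refutes it---no global upper bound on $F_a(x+h)-F_a(x)$ is needed, and since $x_n-x>3^{-n}$ there is no issue of $x$ hugging the endpoint. For sufficiency you cannot simply ``argue as in Lemma~\ref{lem:when-infinite-derivative}'': that proof uses monotonicity of $g$, and $F_a$ is not monotone for $a>1/2$. Instead one uses \eqref{eq:graph-inside-rectangle} together with the sign pattern of consecutive slopes to show that for $3^{-n-1}<h\le 3^{-n}$ one has $F_a(x+h)\ge F_a(x_n)$ when $x+h\ge j/3^n$, and $F_a(x+h)\ge F_a(x_{n+1})$ otherwise (the latter case forcing $\xi_{n+1}=0$), which reduces everything to the same sequence of test points.
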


\begin{proof}
We use the following explicit expression for $F_a(x)$ (see \cite{Kobayashi}): 
\begin{equation*}
F_a(x)=\sum_{k=1}^\infty a^{k-1-i(k-1)}(1-2a)^{i(k-1)}q(\xi_k),
%\label{eq:explicit-expression}
\end{equation*}
where $q(0)=0$, $q(1)=a$ and $q(2)=1-a$. Since we assume here that $\xi_n\in\{0,2\}$ for each $n$, this simplifies to
\begin{equation}
F_a(x)=\sum_{k=1}^\infty a^{k-1}(1-a)\delta_2(\xi_k).
\label{eq:simpler-explicit-expression}
\end{equation}

Suppose first that $F_a^+(x)=\infty$. For $n\in\NN$, let $x_n:=(j+1)/3^n$, where $j$ is the integer such that $(j-1)/3^n\leq x<j/3^n$. Clearly,
\begin{equation}
\frac{F_a(x_n)-F_a(x)}{x_n-x}\to\infty.
\label{eq:special-right-sequence}
\end{equation}
Fix $n$. If $\xi_n=0$, then $x_n=0.\xi_1\xi_2\dots\xi_{n-1}200\dots$, so \eqref{eq:simpler-explicit-expression} gives
\begin{align}
\begin{split}
F_a(x_n)-F_a(x)%&=a^{n-1}q(2)-\sum_{k=n}^\infty a^{k-1}q(\xi_k)\\
%&=a^{n-1}(1-a)-\sum_{k=n+1}^\infty a^{k-1}q(\xi_k)\\
&=a^{n-1}(1-a)-\sum_{k=n+1}^\infty a^{k-1}(1-a)\delta_2(\xi_k)\\
%&=a^{-m}(1-2a)^m(1-a)\left[a^{n-1}-\sum_{k=n+1}^\infty a^{k-1}\delta_2(\xi_k)\right]\\
&=a^{n-1}(1-a)\left[1-\sum_{k=1}^\infty a^k\delta_2(\xi_{n+k})\right].
\end{split}
\label{eq:difference-calculation}
\end{align}
This expression results also when $\xi_n=2$, because regardless of whether $\xi_n=0$ or $2$, the slope of $f_n$ on $[(j-1)/3^n,j/3^n]$ is $(3a)^n$, and the slope of $f_n$ on $[j/3^n,(j+1)/3^n]$ is $3^n a^{n-1}(1-2a)$ in view of \eqref{eq:consecutive-slopes-ratio}. Since $1/3^n<x_n-x\leq 2/3^n$, it follows from \eqref{eq:difference-calculation} that \eqref{eq:special-right-sequence} is equivalent to \eqref{eq:d-is-2}.

Conversely, suppose we have \eqref{eq:d-is-2}. Given $h>0$, let $n\in\NN$ be such that $3^{-n-1}<h\leq 3^{-n}$, let $j$ be the integer such that $(j-1)/3^n\leq x<j/3^n$, and define $x_n$ as above. Then \eqref{eq:special-right-sequence} holds, so in particular $F_a(x_n)>F_a(x)$ for all sufficiently large $n$. Since $f_n'=(3a)^n>0$ on $((j-1)/3^n,j/3^n)$, \eqref{eq:consecutive-slopes-ratio} implies that $f_n'\leq 0$ on $(j/3^n,(j+1)/3^n)$ (with equality if $a=1/2$). Thus, if $x+h\geq j/3^n$, we have immediately from \eqref{eq:graph-inside-rectangle} that
\begin{equation*}
\frac{F_a(x+h)-F_a(x)}{h}\geq \frac{F_a(x_n)-F_a(x)}{h}\geq \frac{F_a(x_n)-F_a(x)}{x_n-x},
\end{equation*}
for $n$ large enough.

On the other hand, if $x+h<j/3^n$, then $\xi_{n+1}=0$ by the hypothesis of the lemma, so $(j-1)/3^{n}\leq x<(3j-2)/3^{n+1}$. Now $f_{n+1}'>0$ on the intervals $((j-1)/3^{n},(3j-2)/3^{n+1})$ and $((3j-1)/3^{n+1},j/3^n)$, and $f_{n+1}'\leq 0$ on $((3j-2)/3^{n+1},(3j-1)/3^{n+1})$. Thus, again by \eqref{eq:graph-inside-rectangle}, $F_a(x+h)\geq F_a((3j-1)/3^{n+1})=F_a(x_{n+1})$. Since $x_{n+1}-x>3^{-n-1}\geq h/3$, it follows that
\begin{equation*}
\frac{F_a(x+h)-F_a(x)}{h}\geq \frac{F_a(x_{n+1})-F_a(x)}{h}\geq \frac{F_a(x_{n+1})-F_a(x)}{3(x_{n+1}-x)},
\end{equation*}
for sufficiently large $n$. Thus, by \eqref{eq:special-right-sequence}, $F_a^+(x)=\infty$. 
\end{proof}

\begin{proof}[Proof of Theorem \ref{thm:main}]
Fix $x\in(0,1)\backslash \mathcal{T}$. (The case $x\in\TT$ is addressed in the proof of Proposition \ref{thm:triadic} below.) Observe that it is sufficient to determine whether $F_a$ has an infinite right-hand derivative at $x$: Since $F_a(1-x)=1-F_a(x)$, it follows that $F_a^-(x)=F_a^+(1-x)$ when at least one of these quantities exists, so the results for an infinite left-hand derivative follow by interchanging $0$'s and $2$'s in the ternary expansion of $x$.

Assume first that $a>1/2$. It is clear from \eqref{eq:fn-slope} and \eqref{eq:graph-inside-rectangle} that $F_a^+(x)$ can not be infinite if $\xi_n=1$ for infinitely many $n$, so we need only consider the case when $m:=N_1(x)<\infty$. If $m=0$, then \eqref{eq:fn-slope} and \eqref{eq:graph-inside-rectangle} imply that $F_a^+(x)$ cannot take the value $-\infty$, and by Lemma \ref{lem:infinite-right-derivative}, $F_a^+(x)=\infty$ if and only if \eqref{eq:main-condition} holds for $d=2$. Suppose now that $m>0$. Choose $n_0\in\NN$ so that $\xi_n\in\{0,2\}$ for all $n\geq n_0$. Let $j$ be the integer such that $j/3^{n_0}\leq x<(j+1)/3^{n_0}$, and put $\tilde{x}:=j/3^{n_0}$. Now we can write $x=\tilde{x}+3^{-n_0}x'$, where $N_1(\tilde{x})=N_1(x)=m$, and $x'\in[0,1)$ satisfies the hypothesis of Lemma \ref{lem:infinite-right-derivative}. Observe that \eqref{eq:d-is-2} holds for $x'$ if and only if it holds for $x$, because the condition is invariant under a shift of the sequence $\{\xi_n\}$. The graph of $F_a$ above the interval $I_0:=[j/3^{n_0},(j+1)/3^{n_0}]=[\tilde{x},\tilde{x}+3^{-n_0}]$ is an affine copy of the whole graph of $F_a$. Specifically, for $z\in I_0$,
\begin{equation*}
F_a(z)=F_a(\tilde{x})+3^{n_0}f_{n_0}^+(x)F_a(z'),
\end{equation*}
where $z'\in[0,1]$ is such that $z=\tilde{x}+3^{-n_0}z'$. This shows that $F_a^+(x)=f_{n_0}^+(x)F_a^+(x')$ when the quantity on the right hand side exists. Since $f_{n_0}^+$ is positive on $I_0$ if $m$ is even, and negative if $m$ is odd, we conclude that $F_a$ has an infinite derivative at $x$ if and only if \eqref{eq:d-is-2} holds, in which case $F_a^+(x)=\infty$ if $m$ is even, and $F_a^+(x)=-\infty$ if $m$ is odd.

Next, assume $a=1/2$. In order for $F_a^+(x)$ to be infinite, it is necessary that $\xi_k\in\{0,2\}$ for all $k$, in view of \eqref{eq:fn-slope}. Assuming this, Lemma \ref{lem:infinite-right-derivative} implies that $F_a^+(x)=\infty$ if and only if \eqref{eq:d-is-2} holds (with $a=1/2$). Since
\begin{equation*}
1-\left(\frac12\right)^{r_n(2)}\leq \sum_{k=1}^\infty \left(\frac12\right)^k \delta_2(\xi_{k+n})\leq 1-\left(\frac12\right)^{r_n(2)+1},
\end{equation*}
this is the case if and only if
\begin{equation*}
3^n\left(\frac12\right)^{n+r_n(2)}\to\infty,
\end{equation*}
and taking logarithms, this reduces to the case $d=2$ in \eqref{eq:Cantor-function-condition}.
\end{proof}

\begin{proof}[Proof of Proposition \ref{thm:triadic}]
Fix $x\in\mathcal{T}$. Assume first that $a>1/2$. Since $\xi_n=0$ for all sufficiently large $n$, \eqref{eq:d-is-2} clearly holds, and by the argument in the proof of Theorem \ref{thm:main}, $F_a$ has an infinite right derivative at $x$. Applying this to $1-x$ shows (via the relation $F_a^-(x)=F_a^+(1-x)$) that $F_a$ also has an infinite left derivative at $x$. By \eqref{eq:consecutive-slopes-ratio}, $f_n^+(x)$ and $f_n^-(x)$ have opposite signs for all sufficiently large $n$, and hence, so do $F_a^+(x)$ and $F_a^-(x)$. This proves (i).

Next, let $a=1/2$. If $x$ lies in the interior of one of the removed intervals in the construction of the ternary Cantor set $\CC$, then $F_a'(x)=0$. Otherwise, $x$ is an endpoint of a removed interval, say it is a right endpoint. Then $F_a^-(x)=0$, and $\xi_n\in\{0,2\}$ for all $n$, so by Lemma \ref{lem:infinite-right-derivative}, $F_a^+(x)=\infty$. By symmetry, if $x$ is the left endpoint of a removed interval, then $F_a^+(x)=0$ and $F_a^-(x)=\infty$. This establishes (ii).

Statements (iii) and (iv) follow directly from Proposition \ref{prop:zero-derivative}. Essentially the same arguments establish \eqref{eq:endpoints}.
\end{proof}

\section{Frequency of digits and Hausdorff dimension} \label{sec:dimensions}

In this section we determine the Hausdorff dimensions of the sets $\DD_0(a)$ and $\mathcal{N}(a)$, as well as that of $\DD_\infty(a)$ for $0<a\leq 1/2$. We also examine how these sets vary with the parameter $a$.

Define the auxiliary functions
\begin{equation*}
\phi(a):=\frac{\log(3a)}{\log a-\log|2a-1|}, \qquad a\in(0,2/3]\backslash\{1/3,1/2\},
\end{equation*}
and
\begin{equation*}
h(p):=\frac{-p\log p-(1-p)\log(1-p)+(1-p)\log 2}{\log 3}, \qquad 0\leq p\leq 1,
\end{equation*}
where, following standard convention, we set $0\log 0\equiv 0$. We extend $\phi$ continuously to $[0,2/3]$ by setting $\phi(0):=\lim_{a\downarrow 0}\phi(a)=1$, $\phi(1/3):=\lim_{a\to 1/3}\phi(a)=1/3$, and $\phi(1/2):=\lim_{a\to 1/2}\phi(a)=0$. Note that $\phi(2/3)=1$. It can be shown that $\phi$ is strictly decreasing on $[0,1/2]$, and strictly increasing on $[1/2,2/3]$. Note that $h$ is maximized at $p=1/3$, with $h(1/3)=1$. See Figure \ref{fig:phi-and-h} for graphs of $\phi$ and $h$. Finally, let
\begin{equation*}
d(a):=h(\phi(a)), \qquad 0\leq a\leq 2/3.
\end{equation*}
The graph of $d$ is shown in Figure \ref{fig:d}. Note that, since $\phi(a_0)=1/3$, $d(a)$ attains its maximum value of $1$ at both $a=1/3$ and $a=a_0$.
(Recall from the Introduction that $a_0\approx .5592$ is the unique real root of $54a^3-27a^2=1$.)

\begin{figure}
\begin{center}
\epsfig{file=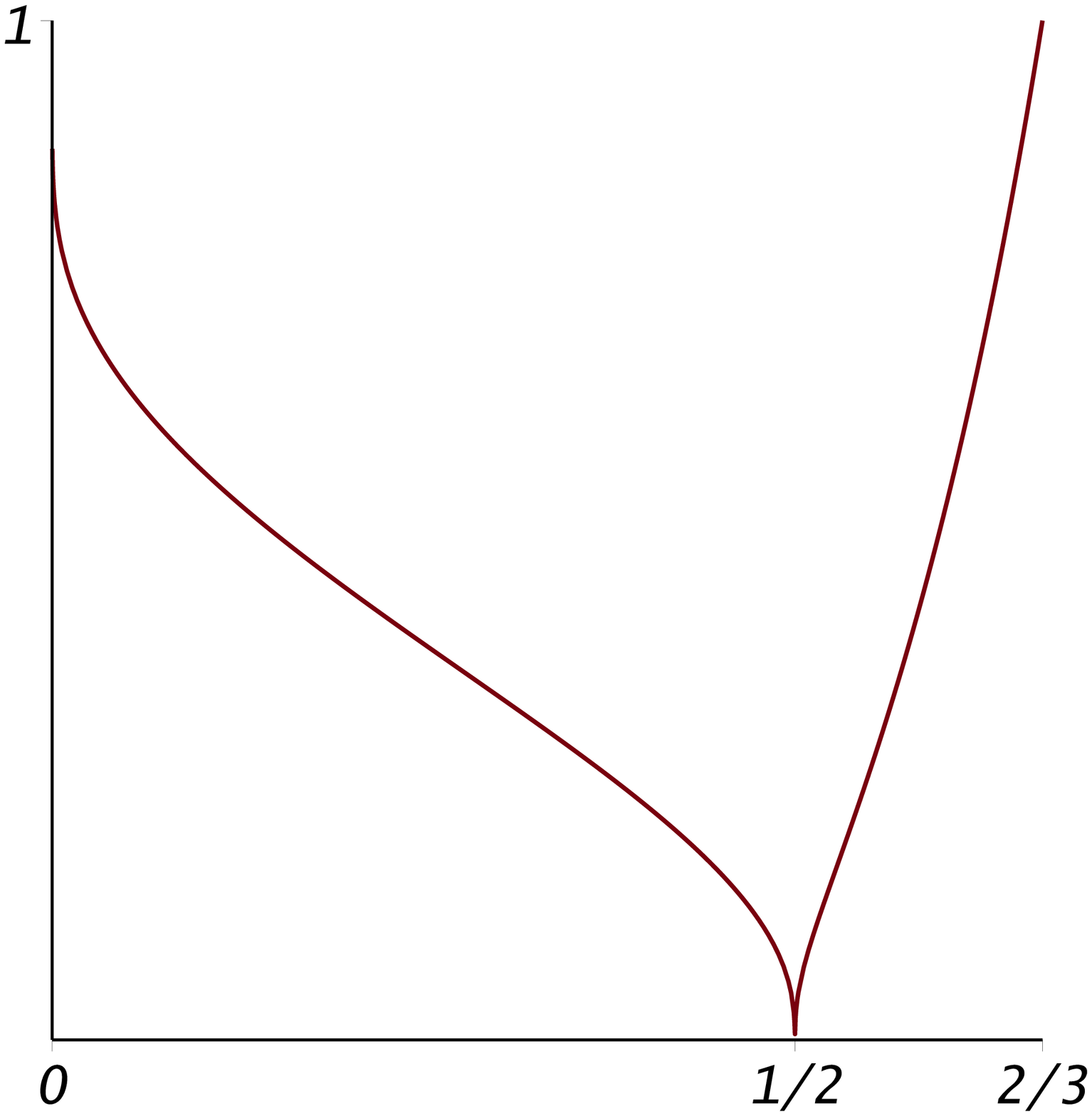, height=.2\textheight, width=.35\textwidth} \qquad\qquad
\epsfig{file=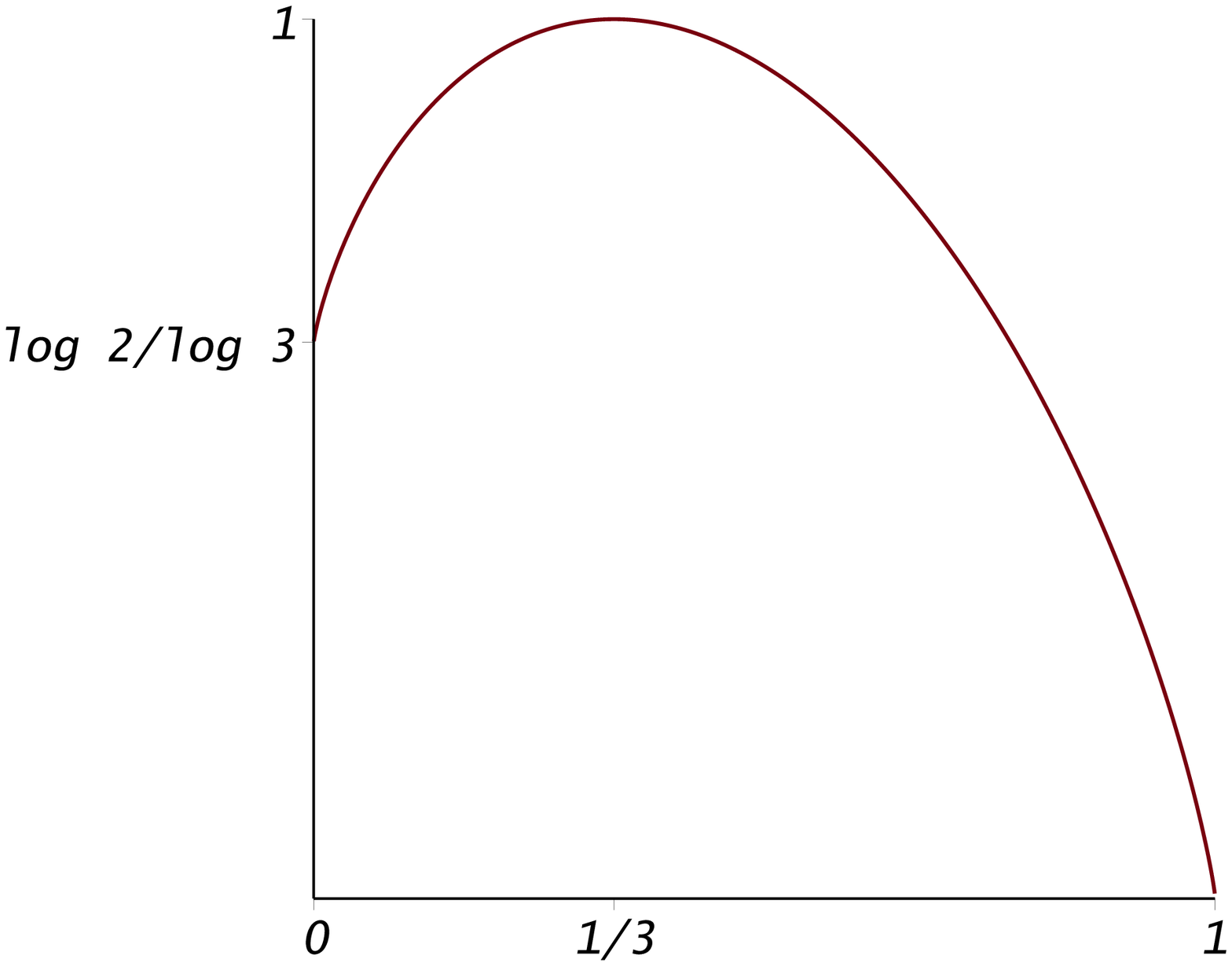, height=.2\textheight, width=.35\textwidth}
\caption{Graphs of $\phi$ (left) and $h$ (right)}
\label{fig:phi-and-h}
\end{center}
\end{figure}

%Obviously, $\DD_0(1/3)=\DD_{\infty}(1/3)=\mathcal{N}(1/3)=\emptyset$. By Okamoto's result, $\DD_0(a)=\emptyset$ for $a\geq 2/3$, and $\lambda(\DD_0(a))=1$ for all $a<a_0$ except $a=1/3$, where $\lambda$ denotes Lebesgue measure. Theorem \ref{cor:golden-ratio} implies that $\DD_\infty(a)=\emptyset$ if $a\geq\rho=(\sqrt{5}-1)/2$, and $\dim_H \DD_\infty(a)=0$ for $a\geq\hat{a}$. About the remaining cases we can say the following.

\begin{theorem} \label{thm:size-and-dimension}
\begin{enumerate}[(i)]
\item The sets $\DD_0(a)$ are descending in $a$ on $(0,1/3)$, ascending on $(1/3,1/2)$, and descending on $[1/2,2/3]$. Furthermore,
\begin{equation*}
\dim_H \DD_0(a)=\begin{cases}
1 & \mbox{if $0<a\leq a_0$, $a\neq 1/3$},\\
d(a) & \mbox{if $a_0\leq a\leq 2/3$},\\
0 & \mbox{if $a\geq 2/3$}.
\end{cases}
\end{equation*}
\item The sets $\DD_\infty(a)$ are ascending in $a$ on $(0,1/3)$, descending on $(1/3,1/2]$, and descending on $(1/2,\rho]$, with a discontinuity at $1/2$ in the sense that $\DD_\infty(1/2)\not\supset\DD_\infty(a)$ for $1/2<a<\rho$. Furthermore,
\begin{equation*}
\dim_H \DD_\infty(a)=d(a), \qquad 0<a\leq 1/2, \quad a\neq 1/3.
\end{equation*}
%\item The sets $\DD_{-\infty}(a)$ are descending in $a$ on $(1/2,\rho]$.
\item The sets $\mathcal{N}(a)$ are ascending in $a$ on $(1/2,1)$, and
\begin{equation*}
\dim_H \mathcal{N}(a)=\begin{cases}
d(a) & \mbox{if $0<a\leq a_0$, $a\not\in\{1/3,1/2\}$},\\
\left(\log_3 2\right)^2 & \mbox{if $a=1/2$},\\
1 & \mbox{if $a\geq a_0$}.
\end{cases}
\end{equation*}
%\item The set $\big(\DD_0(a)\big)^c:=(0,1)\backslash \DD_0(a)$ of points at which $F_a$ does not have a finite derivative has the same Hausdorff dimension as $\mathcal{N}(a)$, for each $a\in(0,1)\backslash\{1/2\}$, while $\dim_H \big(\DD_0(1/2)\big)^c=\log 2/\log 3$.
\end{enumerate}
\end{theorem}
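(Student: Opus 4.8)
The plan is to reduce the whole statement to the behaviour of a single quantity. For $a\neq 1/2$ and $x\in(0,1)$ set $S_n(x):=\log|f_n^+(x)|$; by \eqref{eq:fn-slope},
\[
S_n(x)=n\log(3a)-i(n;x)\log\frac{a}{|2a-1|}=B\,n\bigl(\phi(a)-p_n(x)\bigr),\qquad B:=\log\frac{a}{|2a-1|},\quad p_n(x):=\frac{i(n;x)}{n},
\]
where $B<0$ for $a<1/3$ and $B>0$ for $a>1/3$. By Propositions \ref{prop:zero-or-infinite} and \ref{prop:zero-derivative}, for every $a\neq 1/2$ one has $\DD_0(a)=\{x:S_n(x)\to-\infty\}$ and $\DD_\infty(a)\subseteq\{x:S_n(x)\to+\infty\}$, with equality in the latter when $0<a<1/2$ (where $F_a$ is increasing), and $\mathcal N(a)=(0,1)\setminus\DD_0(a)\setminus\DD_\infty(a)$. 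I would then invoke two standard facts about ternary digit frequencies: (a) a Besicovitch--Eggleston-type estimate, $\dim_H\{x:\liminf_n p_n(x)\ge p\}=\dim_H\{x:\limsup_n p_n(x)\ge p\}=h(p)$ for $p\in[1/3,1]$, together with the symmetric statements (with $\ge$ replaced by $\le$) for $p\in[0,1/3]$ — the upper bounds by covering each level-$n$ set $\{x:i(n;x)<\alpha n\}$ (respectively $\{x:i(n;x)>\alpha n\}$) by the appropriate number of triadic intervals and a Stirling estimate, the lower bounds coming from (b); and (b) for $p\in[0,1]$, the Bernoulli measure $\mu_p$ on $[0,1]$ with digit probabilities $\bigl(\tfrac{1-p}{2},p,\tfrac{1-p}{2}\bigr)$ satisfies $\dim_H\mu_p=h(p)$ and $p_n(x)\to p$ for $\mu_p$-almost every $x$.

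The engine of the argument is the observation, which I would state as a lemma, that under $\mu_p$ the increments $\log\bigl(f_{n+1}^+(x)/f_n^+(x)\bigr)\in\{\log(3a),\log(3|2a-1|)\}$ are i.i.d.\ with mean $(1-p)\log(3a)+p\log(3|2a-1|)$, and this mean vanishes precisely for $p=\phi(a)$. Thus under $\mu_{\phi(a)}$ the sequence $(S_n)$ is a non-degenerate mean-zero random walk, so $\liminf_n S_n=-\infty$ and $\limsup_n S_n=+\infty$ almost surely (Chung--Fuchs recurrence), while for $p$ on the side of $\phi(a)$ dictated by the sign of $B$ one gets $S_n\to+\infty$ $\mu_p$-almost surely. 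The dimensions of $\DD_0$ and $\DD_\infty$ then follow from sandwiches. For $a_0\le a<2/3$ (so $B>0$, $\phi(a)\ge 1/3$), $\{x:\liminf_n p_n>\phi(a)\}\subseteq\DD_0(a)\subseteq\{x:\liminf_n p_n\ge\phi(a)\}$, and both ends have dimension $h(\phi(a))=d(a)$ by (a)--(b); the value $1$ for $0<a<a_0$, $a\neq 1/3$, is Okamoto's almost-everywhere statement; at $a=a_0$ already $\DD_0(a_0)\supseteq\{x:\liminf_n p_n>1/3\}$, which has dimension $1$; and for $a\ge 2/3$, $\DD_0(a)=\emptyset$ by Okamoto's theorem. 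For $0<a<1/2$ the analogous sandwich (with $\{\liminf_n p_n>\phi(a)\}\subseteq\DD_\infty(a)$ when $a<1/3$, and $\{\limsup_n p_n<\phi(a)\}\subseteq\DD_\infty(a)$ when $1/3<a<1/2$) gives $\dim_H\DD_\infty(a)=h(\phi(a))=d(a)$; for $a=1/2$, Theorem \ref{thm:main}(ii) gives $\DD_\infty(1/2)\subseteq\CC$, so $\dim_H\DD_\infty(1/2)\le\log_3 2=d(1/2)$, while for any $0<\eps<c$ (with $c=\log_2 3-1$) the set $\{x\in\CC:r_n(d)\le\eps n\ \text{eventually for }d=0,2\}$ is contained in $\DD_\infty(1/2)$ and has full dimension $\log_3 2$ in $\CC$ (a uniform-measure-generic point of $\CC$ has run lengths $O(\log n)$), yielding the reverse inequality.

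For $\mathcal N(a)$ with $0<a<a_0$, $a\notin\{1/3,1/2\}$, the bound $\dim_H\mathcal N(a)\le d(a)$ comes from $\mathcal N(a)\subseteq\{x:S_n\not\to-\infty\}$, which is contained in $\{x:\limsup_n p_n\ge\phi(a)\}$ if $a<1/3$ and in $\{x:\liminf_n p_n\le\phi(a)\}$ if $a>1/3$, both of dimension $h(\phi(a))$ by (a); and the matching lower bound is exactly the random-walk lemma, since $\mu_{\phi(a)}$-almost every $x$ has $S_n\not\to-\infty$ and $S_n\not\to+\infty$, hence lies in $\mathcal N(a)$, while $\dim_H\mu_{\phi(a)}=h(\phi(a))=d(a)$. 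For $a\ge a_0$ one has $\mathcal L(\DD_0(a))=\mathcal L(\DD_\infty(a))=0$ — by Okamoto's theorem for $a>a_0$, by Kobayashi's iterated-logarithm argument at $a=a_0$, and because $\DD_\infty(a)$ is Lebesgue-null for $a>1/2$ (it is countable, or a set of unique $\beta$-expansions, by Theorem \ref{cor:golden-ratio}) — so $\mathcal N(a)$ has full Lebesgue measure and $\dim_H\mathcal N(a)=1$. The remaining case $\dim_H\mathcal N(1/2)=(\log_3 2)^2$ I would simply invoke as Darst's theorem \cite{Darst}; alternatively it can be recovered from Theorem \ref{thm:main}(ii) and a companion run-length description of $\DD_0(1/2)\cap\CC$, the square arising because $\mathcal N(1/2)\subseteq\CC$ is cut out of $\{0,2\}^{\NN}$ by a $\limsup$-of-relative-run-length condition whose solution set has relative dimension $\log_3 2$ inside $\CC$.

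It remains to read off the monotonicity statements. If $a<a'$ lie in one of $(0,1/3)$, $(1/3,1/2)$, or $(1/2,2/3]$, then $S_n(x;a)=S_n(x;a')+B\,n\bigl(\phi(a)-\phi(a')\bigr)$ with the sign of $B$ appropriate to the interval, and since $\phi$ is strictly decreasing on $[0,1/2]$ and strictly increasing on $[1/2,2/3]$, this at once gives the claimed inclusions for $\DD_0$ and for $\DD_\infty$ on those intervals; the endpoint $a=1/2$ is handled by noting that $\DD_0(a)\cap\CC=\emptyset$ and $\CC\subseteq\DD_\infty(a)$ when $1/2<a<2/3$ (respectively when $1/3<a<1/2$). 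The discontinuity of $\DD_\infty$ at $1/2$ is witnessed by $x=0.1\overline{20}$: by Theorem \ref{thm:main}(i) and Remark \ref{rem:simpler-condition} it lies in $\DD_\infty(a)$ for every $1/2<a<\rho$, but it is not in $\CC$ and so not in $\DD_\infty(1/2)$. That $\DD_\infty$ is descending on $(1/2,\rho]$ follows since $\sum_{k\ge 1}a^k\delta_d(\xi_{n+k})$ is termwise increasing in $a$, so membership in $\DD_\infty(a')$ forces, via Remark \ref{rem:simpler-condition}, the relevant $\limsup$ to be $<1$ also for $a<a'$ (the borderline value $1$ being dealt with by noting that a long run of the digit $d$ ahead of position $n$ keeps $1-\sum_k a^k\delta_d(\xi_{n+k})$ bounded away from $0$); and $\mathcal N$ is ascending on $(1/2,1)$ because there $\DD_0(a)\cup\DD_\infty(a)$ is descending. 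I expect the real work to be in fact (a) — the sharp upper bounds for the $\liminf$/$\limsup$ frequency sets via covering and Stirling estimates — and in the bookkeeping for the case $a=1/2$ of part (iii); the remainder is a careful assembly of Propositions \ref{prop:zero-or-infinite} and \ref{prop:zero-derivative}, Theorem \ref{thm:main}, and the mean-zero random-walk lemma.
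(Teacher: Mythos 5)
Your proposal is correct and, for the dimension formulas of $\DD_0(a)$ and $\DD_\infty(a)$, follows essentially the same route as the paper: sandwich each set between digit-frequency level sets $\{\liminf p_n\gtrless\phi(a)\}$, $\{\limsup p_n\gtrless\phi(a)\}$, compute their common dimension $h(\phi(a))=d(a)$ by a Besicovitch--Eggleston covering/mass-distribution argument, and handle $a=1/2$ by Borel--Cantelli under the Cantor measure (your run-length set $\{x\in\CC: r_n(d)\le\eps n \text{ eventually}\}$ is exactly the paper's $\mu(\DD_\infty(1/2))=1$ argument in disguise). Where you genuinely diverge is the lower bound for $\dim_H\mathcal N(a)$: the paper embeds $\mathcal E_{p-\eps}^{p+\eps}=\{l_1=p-\eps,\ u_1=p+\eps\}$ into $S_{\phi(a)}\cap S^{\phi(a)}$ and quotes a (modified) result of Carbone et al.\ that $\dim_H\mathcal E_p^q=\min\{h(p),h(q)\}$, whereas you observe that under the critical Bernoulli measure $\mu_{\phi(a)}$ the sequence $S_n=\log|f_n^+|$ is a non-degenerate mean-zero random walk, so by Chung--Fuchs (or Hewitt--Savage) it oscillates to $\pm\infty$ almost surely, forcing $\mu_{\phi(a)}(\mathcal N(a))=1$ and hence $\dim_H\mathcal N(a)\ge\dim_H\mu_{\phi(a)}=h(\phi(a))$. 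This is a self-contained and arguably cleaner substitute for the external citation; it buys you the lower bound at the exact critical frequency rather than via perturbed frequency pairs. Your derivation of the monotonicity of $\mathcal N$ on $(1/2,1)$ by complementing the descending union $\DD_0\cup\DD_\infty$ is also a tidier route than the paper's. Two small slips, neither fatal: the displayed identity $S_n(x;a)=S_n(x;a')+B\,n(\phi(a)-\phi(a'))$ is false as written, since $B=\log(a/|2a-1|)$ depends on $a$; the monotonicity conclusions nevertheless follow from the sandwiches $R^{\phi(a)}\subset\DD_0(a)\subset\bar R^{\phi(a)}$ (etc.) together with the strict monotonicity of $\phi$, e.g.\ $\DD_0(a')\subset\bar R^{\phi(a')}\subset R^{\phi(a)}\subset\DD_0(a)$ for $a<a'<1/3$. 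And in the descent of $\DD_\infty$ on $(1/2,\rho]$, the ``long run'' parenthetical is muddled; the clean argument is that $\sum_k a^k\delta_d(\xi_{n+k})\le(a/a')\sum_k(a')^k\delta_d(\xi_{n+k})\le a/a'<1$, so the necessary condition at $a'$ already implies the strict sufficient condition at $a<a'$.
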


\begin{figure}
\begin{center}
\epsfig{file=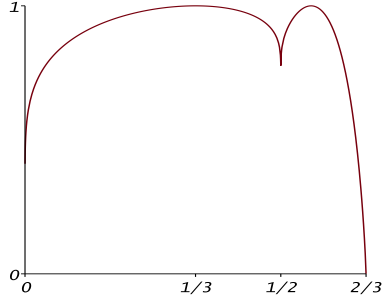, height=.2\textheight, width=.35\textwidth}
\caption{Graph of $d(a)$. Note that $d(0)=0$ and $d(1/2)=\log_3 2$.}
\label{fig:d}
\end{center}
\end{figure}

Note that $\dim_H \mathcal{N}(a)$ is discontinuous at $a=1/2$, since $d(1/2)=\log_3 2$.

It seems difficult to compute the exact Hausdorff dimension of $\DD_{\infty}(a)$ for $1/2<a<\hat{a}$. We observe here that, since $\DD_{\infty}(a)$ 
is covered by countably many affine copies of $\CC$, its dimension is at most $\log_3 2$. In the next section (see Remark \ref{eq:dimension-bounds-for-D-infty}) we will derive significantly tighter upper and lower bounds for $\dim_H\DD_{\infty}(a)$.

\bigskip
In order to prove Theorem \ref{thm:size-and-dimension}, some more notation is needed. Let
\begin{equation*}
u_1(x):=\limsup_{n\to\infty}\frac{i(n;x)}{n}, \qquad l_1(x):=\liminf_{n\to\infty}\frac{i(n;x)}{n},
\end{equation*}
for $x\in[0,1]$, where $i(n;x)$ is as defined at the beginning of Section \ref{sec:prelim}. For $p\in[0,1]$, define the sets
\begin{gather*}
R^p:=\{x\in[0,1]: u_1(x)<p\}, \qquad \bar{R}^p:=\{x\in[0,1]: u_1(x)\leq p\},\\
R_p:=\{x\in[0,1]: l_1(x)>p\}, \qquad \bar{R}_p:=\{x\in[0,1]: l_1(x)\geq p\},\\
S^p:=\{x\in[0,1]: u_1(x)>p\}, \qquad \bar{S}^p:=\{x\in[0,1]: u_1(x)\geq p\},\\
S_p:=\{x\in[0,1]: l_1(x)<p\}, \qquad \bar{S}_p:=\{x\in[0,1]: l_1(x)\leq p\}.
\end{gather*}
(Note that these sets satisfy pairwise complementary relationships, e.g. $S^p=[0,1]\backslash \bar{R}^p$, etc.)

\begin{lemma} \label{lem:upper-and-lower-density} 
We have
\begin{gather}
\dim_H R^p=\dim_H \bar{R}^p=\dim_H S_p=\dim_H \bar{S}_p=\begin{cases}
h(p) & \mbox{if $0\leq p\leq 1/3$},\\
1 & \mbox{if $1/3\leq p\leq 1$},
\end{cases}
\label{eq:small-p-dimension}
\\
\dim_H R_p=\dim_H \bar{R}_p=\dim_H S^p=\dim_H \bar{S}^p=\begin{cases}
1 & \mbox{if $0\leq p\leq 1/3$},\\
h(p) & \mbox{if $1/3\leq p\leq 1$},
\end{cases}
\label{eq:large-p-dimension}
\end{gather}
and
\begin{equation}
\dim_H (S_p\cap S^p)=\dim_H (\bar{S}_p\cap \bar{S}^p)=h(p), \qquad 0\leq p\leq 1.
\label{eq:intersection-dimension}
\end{equation}
\end{lemma}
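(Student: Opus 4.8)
\textbf{Proof proposal for Lemma \ref{lem:upper-and-lower-density}.}

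The plan is to reduce everything to a single known result on the dimension of sets of prescribed digit frequency, and then to bootstrap the individual claims from it by monotonicity and countable-union arguments. The basic building block is the classical fact (a Besicovitch--Eggleston type theorem for ternary expansions, see e.g. \cite{Falconer}) that for a fixed $p\in[0,1]$ the set $F_p:=\{x\in[0,1]:\lim_{n\to\infty} i(n;x)/n=p\}$ has Hausdorff dimension exactly $h(p)$, where $h$ is the normalized entropy function defined above (here the ``$1$'s'' carry weight $-p\log p$ and the digits $0,2$ together carry weight $(1-p)\log 2-(1-p)\log(1-p)$ after distributing the remaining mass $1-p$ equally; this is precisely the formula for $h$). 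I would either cite this directly or, for self-containedness, sketch the standard proof: the upper bound by a natural covering of $F_p$ by cylinders together with a Stirling estimate on multinomial coefficients, and the lower bound via the mass distribution principle applied to the $(p,(1-p)/2,(1-p)/2)$-Bernoulli measure on ternary expansions, using the law of large numbers to see this measure is supported on $F_p$.

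Next I would establish the four ``one-sided'' dimension formulas in \eqref{eq:small-p-dimension} and \eqref{eq:large-p-dimension}. The upper bounds come from covering arguments: for instance $\bar R^p=\{u_1(x)\le p\}$ is contained, for each $\eps>0$, in $\bigcup_{N}\{x: i(n;x)\le (p+\eps)n \ \forall n\ge N\}$, and each such set is covered by cylinders of level $n$ with at most $(p+\eps)n$ ones, whose total $s$-dimensional mass is controlled by $\binom{n}{\le(p+\eps)n}2^{n-(p+\eps)n}3^{-sn}$; taking $n\to\infty$, $\eps\to0$ and optimizing in $s$ (using that $h$ is increasing on $[0,1/3]$ and that for $p\ge1/3$ the trivial bound $1$ is attained) gives the stated values. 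For $p\ge 1/3$ the bound $h(p)\le 1$ is not the sharp one for $\bar R^p$; rather $\bar R^p\supseteq F_{1/3}$ (take $p=1/3<p$... careful: need $u_1=1/3\le p$, true), so $\dim_H\bar R^p\ge h(1/3)=1$ and hence equals $1$. For $p\le 1/3$ the lower bound $\dim_H R^p\ge h(p)$ follows because $R^p\supseteq F_{p'}$ for every $p'<p$ (if the limit equals $p'$ then $u_1=p'<p$), so $\dim_H R^p\ge\sup_{p'<p}h(p')=h(p)$ by continuity of $h$; since $R^p\subseteq\bar R^p$ the two agree. The statements for $S_p$ and $\bar S_p$ are then handled identically after noting $l_1(x)=u_1$ of the ``complementary-frequency'' picture is \emph{not} a symmetry, but $S_p=\{l_1<p\}\supseteq F_{p'}$ for $p'<p$ and $S_p\subseteq\{x:i(n;x)<(p+\eps)n$ for infinitely many $n\}$, which is a countable union over $N$ of sets coverable by the same binomial estimate along a subsequence; the ``infinitely often'' feature costs nothing in the covering because one still covers by level-$n$ cylinders for those $n$. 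The large-$p$ formulas \eqref{eq:large-p-dimension} follow by the mirror argument (e.g. $\bar R_p=\{l_1\ge p\}\supseteq F_{p'}$ for $p'>p$, and for $p\le1/3$ one gets dimension $1$ from $F_{1/3}\subseteq\bar R_p$).

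Finally, for \eqref{eq:intersection-dimension}: the set $\bar S_p\cap\bar S^p=\{l_1\le p\le u_1\}$ clearly contains $F_p$, giving $\dim_H\ge h(p)$; and it is contained in $\bar S_p=\{l_1\le p\}$ — wait, that only gives $\le h(p)$ when $p\le1/3$. For $p\ge1/3$ I would instead use $\bar S_p\cap\bar S^p\subseteq\{u_1\ge p\}=\bar S^p$, whose dimension is $h(p)$ by \eqref{eq:large-p-dimension}. In either regime one of the two containing sets already has dimension $h(p)$, so the intersection does too; the open version $S_p\cap S^p$ is squeezed between $F_p\subseteq$ (since $l_1=u_1=p$ forces... no: need $l_1<p<u_1$ strictly, which $F_p$ fails) — here instead I use $\bigcup_{p'<p''}(F_{?})$... more carefully, $S_p\cap S^p\supseteq\{x:l_1(x)<p<u_1(x)\}\supseteq$ the set where the frequency oscillates, e.g. a set built from a Bernoulli-like measure alternating long blocks of parameter $p-\eps$ and $p+\eps$, which has dimension $\to h(p)$ as $\eps\to0$ by a standard inhomogeneous mass-distribution computation, while the upper bound again comes from $S_p\cap S^p\subseteq S^p$ (dimension $h(p)$ for $p\ge1/3$) or $\subseteq S_p$ (dimension $h(p)$ for $p\le1/3$).

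\textbf{Expected main obstacle.} The routine part is the entropy/Stirling bookkeeping; the genuinely delicate point is the lower bound for $S_p\cap S^p$ (and, to a lesser extent, $S_p$, $S^p$) at the value $p$, where one needs a point set on which the digit frequency genuinely oscillates between values below $p$ and above $p$ yet which still carries dimension arbitrarily close to $h(p)$. The clean way to do this is to construct an explicit Cantor-type subset using concatenated blocks whose $1$-density alternates between $p-\eps_k$ and $p+\eps_k$ with $\eps_k\downarrow0$ and block lengths growing fast enough that the frequency oscillation persists in the limit, then apply a mass distribution principle with the associated inhomogeneous Bernoulli measure; verifying the local mass bound $\mu(C_n(x))\le 3^{-(h(p)-\eps)n}$ uniformly along \emph{all} levels $n$ (not just block endpoints) is where the care is needed, and I would handle the ``mid-block'' levels by a monotonicity/interpolation estimate on the partial entropy.
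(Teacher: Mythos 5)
Your proposal is correct, and its skeleton is the same as the paper's: lower bounds come from the Besicovitch--Eggleston formula for sets of prescribed digit frequencies (the paper uses $\mathcal{F}(p_0,p_1,p_2)$ with $p_0=p_2=(1-p)/2$, cited from \cite[Proposition 10.1]{Falconer}, rather than your $F_p$, but these have the same dimension $h(p)$), combined with monotonicity in $p$ and continuity of $h$; the strict-inequality sets are handled by approximating $p$ from the appropriate side exactly as you do. The two places where you diverge are worth noting. First, for the upper bound $\dim_H\bar S_p\le h(p)$ when $p\le 1/3$ (which by the inclusions $R^p\subset\bar R^p\subset\bar S_p$ and $S_p\subset\bar S_p$ settles all four sets in \eqref{eq:small-p-dimension}), you count level-$n$ cylinders with at most $(p+\eps)n$ ones and sum the $s$-content over $n\ge N$; the paper instead runs the mass distribution principle in the ``reverse'' direction with the $\bigl(\tfrac{1-p}{2},p,\tfrac{1-p}{2}\bigr)$-Bernoulli measure, showing $\limsup_k \mu(I_k(x))/|I_k(x)|^s=\infty$ on $\bar S_p$ for every $s>h(p)$. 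Both are standard and equally sharp; your counting route just needs the observation that $\binom{n}{j}2^{n-j}$ is increasing in $j$ for $j\lesssim n/3$, so that the cumulative count is still of order $n\,3^{h(p+\eps)n}$ and the resulting series converges. Second, and more substantively, for the lower bound in \eqref{eq:intersection-dimension} applied to the open set $S_p\cap S^p$ (where $F_p$ does not help), the paper simply cites Carbone, Cardone and Corbo Esposito for $\dim_H\{x: l_1(x)=p,\ u_1(x)=q\}=\min\{h(p),h(q)\}$ and takes the parameters to be $p\mp\eps$; you propose to reprove this special case by building the oscillating-block Moran set and its inhomogeneous Bernoulli measure by hand. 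That is precisely the content of the cited theorem, so your route is self-contained at the cost of the block-length bookkeeping you correctly flag as the only delicate step (verifying the local mass bound at mid-block levels). Everything else in your write-up matches the paper's argument.
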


\begin{proof} 
We first prove \eqref{eq:small-p-dimension}.
Let $N_d^{(n)}(x):=\#\{j: 1\leq j\leq n,\ \xi_j=d\}$, $d=0,1,2$. (So $N_1^{(n)}(x)=i(n;x)$.) Define the sets
\begin{equation*}
\mathcal{F}(p_0,p_1,p_2):=\left\{x\in[0,1]: \lim_{n\to\infty} n^{-1}N_d^{(n)}(x)=p_d,\ d=0,1,2\right\},
\end{equation*}
for $p_0,p_1,p_2\in[0,1]$ such that $p_0+p_1+p_2=1$. It is well known (e.g. \cite[Proposition 10.1]{Falconer}) that
\begin{equation}
\dim_H \mathcal{F}(p_0,p_1,p_2)=-\frac{1}{\log 3}\sum_{i=0}^2 p_i\log p_i.
\label{eq:entropy-formula}
\end{equation}
If $p>1/3$, then all four sets in \eqref{eq:small-p-dimension} contain the set $\mathcal{F}(1/3,1/3,1/3)$, so their Lebesgue measure is $1$ by Borel's normal number theorem. Assume now that $0<p\leq 1/3$. Since $\bar{R}^p$ contains the set
\begin{equation*}
\mathcal{F}\left(\frac{1-p}{2},p,\frac{1-p}{2}\right),
\end{equation*}
\eqref{eq:entropy-formula} gives $\dim_H \bar{R}^p\geq h(p)$, and then of course also $\dim_H \bar{S}_p\geq h(p)$. But $R^p\supset \bar{R}^{p-\eps}$ and $S_p\supset \bar{S}_{p-\eps}$ for all $\eps>0$, so by the continuity of $h$, $\dim_H R^p\geq h(p)$ and $\dim_H S_p\geq h(p)$. 

For the reverse inequality, by monotonicity of the Hausdorff dimension it is enough to show that $\dim_H \bar{S}_p\leq h(p)$. This follows from a slight modification of the proof of Proposition 10.1 in \cite{Falconer}. For a $k$-tuple $(i_1,\dots,i_k)\in\{0,1,2\}^k$, let $I_{i_1,\dots,i_k}=\{x\in[0,1]: \xi_1(x)=i_1,\dots, \xi_k(x)=i_k\}$, so $I_{i_1,\dots,i_k}$ is a triadic interval of length $3^{-k}$. For $x\in[0,1]$ and $k\in\NN$, let $I_k(x)$ be the unique interval $I_{i_1,\dots,i_k}$ which contains $x$. Define a probability measure $\mu$ on $[0,1]$ by
\begin{equation*}
\mu(I_{i_1,\dots,i_k})=p^{n_1(i_1,\dots,i_k)}\left(\frac{1-p}{2}\right)^{k-n_1(i_1,\dots,i_k)},
\end{equation*}
for each $k\in\NN$ and $(i_1,\dots,i_k)\in\{0,1,2\}^k$, where $n_1(i_1,\dots,i_k):=\#\{j: 1\leq j\leq k,\ i_j=1\}$.
Let $x\in\bar{S}_p$, and $s>h(p)$. Then
\begin{equation*}
\frac{1}{k}\log\frac{\mu(I_k(x))}{|I_k(x)|^s}=\left\{\log p-\log\left(\frac{1-p}{2}\right)\right\}\frac{i(k)}{k}+\log\left(\frac{1-p}{2}\right)+s\log 3,
\end{equation*}
where $|I_k(x)|=3^{-k}$ denotes the length of $I_k(x)$. Since $p\leq 1/3$ and $\liminf i(k)/k\leq p$, it follows that
\begin{align*}
\limsup_{k\to\infty} \frac{1}{k}\log\frac{\mu(I_k(x))}{|I_k(x)|^s}&\geq p\left\{\log p-\log\left(\frac{1-p}{2}\right)\right\}+\log\left(\frac{1-p}{2}\right)+s\log 3\\
&=\big(s-h(p)\big)\log 3>0,
\end{align*}
and hence,
\begin{equation*}
\limsup_{k\to\infty} \frac{\mu(I_k(x))}{|I_k(x)|^s}=\infty.
\end{equation*}
Thus, by the mass distribution principle (see for instance \cite[Proposition 4.9]{Falconer}, and note that balls there may be replaced by triadic intervals), $\dim_H \bar{S}_p\leq h(p)$. This concludes the proof of \eqref{eq:small-p-dimension} for $0<p\leq 1$. The case $p=0$ follows by monotonicity in $p$ of the sets involved and the continuity of $h$. The proof of \eqref{eq:large-p-dimension} is analogous.

As for \eqref{eq:intersection-dimension}, note first that \eqref{eq:small-p-dimension} and \eqref{eq:large-p-dimension} immediately give the upper bound
\begin{equation*}
\dim_H (\bar{S}_p\cap \bar{S}^p)\leq\min\{\dim_H \bar{S}_p,\dim_H \bar{S}^p\}=h(p).
\end{equation*}
To establish the lower bound, define the sets
\begin{equation*}
\mathcal{E}_p^q:=\{x\in[0,1]: l_1(x)=p,\ u_1(x)=q\}, \qquad 0<p\leq q<1.
\end{equation*}
A modification of the proof of Theorem 6 of Carbone et al. \cite{Carbone} yields
\begin{equation}
\dim_H \mathcal{E}_p^q=\min\{h(p),h(q)\}.
\label{eq:Carbone-formula}
\end{equation}
Since $S_p\cap S^p\supset \mathcal{E}_{p-\eps}^{p+\eps}$ for each $\eps>0$, this implies, by the continuity of $h$, that
\begin{equation*}
\dim_H (S_p\cap S^p)\geq h(p).
\end{equation*}
This completes the proof, because $S_p\cap S^p\subset \bar{S}_p\cap \bar{S}^p$.
\end{proof}

\begin{proof}[Proof of Theorem \ref{thm:size-and-dimension}]
(i) Proposition \ref{prop:zero-derivative}(i) implies that
\begin{gather}
R^{\phi(a)}\subset \DD_0(a)\subset \bar{R}^{\phi(a)}, \qquad 0<a<1/3, \label{eq:D0-lower}\\
R_{\phi(a)}\subset \DD_0(a)\subset \bar{R}_{\phi(a)}, \qquad 1/3<a<2/3, \quad a\neq 1/2. \label{eq:D0-upper}
\end{gather}
(Note that Okamoto \cite[Remark 1]{Okamoto} incorrectly states (in our notation) that $S_{\phi(a)}\subset \DD_0(a)$ for $0<a<1/3$.) Since the sets $R^p$ and $\bar{R}^p$ are ascending in $p$ and $\phi$ is strictly decreasing on $[0,1/2]$, it follows from \eqref{eq:D0-lower} that $\DD_0(a)$ is descending in $a$ on $(0,1/3)$. Likewise, since $R_p$ and $\bar{R}_p$ are descending in $p$, \eqref{eq:D0-upper} yields that $\DD_0(a)$ is ascending on $(1/3,1/2)$. But $\phi$ is strictly increasing on $[1/2,2/3]$, so $\DD_0(a)$ is descending on $(1/2,2/3]$. Finally, we may include the left endpoint $1/2$ in this last interval since
\begin{equation}
\DD_0(a)\subset [0,1]\backslash\CC=\DD_0(1/2), \qquad a>1/2,
\end{equation}
a consequence of the fact that $f_n^+(x)=(3a)^n\to\infty$ for $x\in\CC$ and $a>1/2$. The only dimension statement in (i) that requires an argument is that $\dim_H \DD_0(a)=d(a)$ for $a_0\leq a\leq 2/3$; this follows from \eqref{eq:D0-upper} and Lemma \ref{lem:upper-and-lower-density} in view of the monotonicity of Hausdorff dimension.

(ii) From Proposition \ref{prop:zero-derivative}(ii) we obtain that
\begin{gather}
R_{\phi(a)}\subset \DD_\infty(a)\subset \bar{R}_{\phi(a)}, \qquad 0<a<1/3, \label{eq:Dinf-lower}\\
R^{\phi(a)}\subset \DD_\infty(a)\subset \bar{R}^{\phi(a)}, \qquad 1/3<a<1/2. \label{eq:Dinf-upper}
\end{gather}
These inclusions imply, via an argument similar to the one in the proof of part (i) above, that $\DD_\infty(a)$ is ascending in $a$ on $(0,1/3)$ and descending on $(1/3,1/2)$. Since by Proposition \ref{thm:triadic}(ii),
\begin{equation}
\DD_\infty(1/2)\subset \CC\backslash\mathcal{T} \subset \DD_\infty(a), \qquad 1/3<a<1/2,
\end{equation}
it follows that $\DD_\infty(a)$ is in fact descending on $(1/3,1/2]$. That $\DD_\infty(a)$ is descending on $(1/2,\rho]$ follows from Remark \ref{rem:simpler-condition}. But $\DD_\infty(a)$ is not descending on the entire interval $(1/3,\rho]$, since $\DD_\infty(1/2)$ does not contain any points with at least one `$1$' in their ternary expansion, whereas $\DD_\infty(a)$ contains infinitely many such points when $1/2<a<\rho$.

That $\dim_H D_\infty(a)=d(a)$ for $a\in(0,1/2)\backslash\{1/3\}$ follows from \eqref{eq:Dinf-lower}, \eqref{eq:Dinf-upper} and Lemma \ref{lem:upper-and-lower-density}. Finally, that $\dim_H \DD_\infty(1/2)=d(1/2)=\log_3 2$ follows since Theorem \ref{thm:main}(ii) and the Borel-Cantelli lemma imply that $\mu(\DD_\infty(1/2))=1$, where $\mu$ is the Cantor measure, determined by $\mu([0,x])=F_{1/2}(x)$ for $x\in[0,1]$.

(iii) Taking complements in \eqref{eq:D0-upper} and using Theorem \ref{thm:main}(i), we have
\begin{equation}
S_{\phi(a)}\cap S^0\subset \mathcal{N}(a)\subset \bar{S}_{\phi(a)}, \qquad 1/2<a<2/3,
\label{eq:N-upper}
\end{equation}
which shows that $\mathcal{N}(a)$ is ascending in $a$ on $(1/2,2/3)$. For $a\geq 2/3$, $\mathcal{N}(a)=(0,1)$, so $\mathcal{N}(a)$ is actually ascending on $[1/2,1)$. The dimension of $\mathcal{N}(1/2)$ was computed by Darst \cite{Darst}. That $\dim_H \mathcal{N}(a)=d(a)$ for $a\in(1/2,a_0)$ follows from \eqref{eq:N-upper} and Lemma \ref{lem:upper-and-lower-density}, noting that $\phi(a_0)=1/3$. (For the lower estimate, observe that $S_p\cap S^0\supset\{x\in[0,1]: l_1(x)=u_1(x)=p-\eps\}$ for $0<\eps<p<1$, and use \eqref{eq:Carbone-formula} and the continuity of $h$.) For $a\in(0,1/2)\backslash\{1/3\}$, the same expression follows from \eqref{eq:intersection-dimension} and the inclusions
\begin{equation}
S_{\phi(a)}\cap S^{\phi(a)}\subset \mathcal{N}(a)\subset \bar{S}_{\phi(a)}\cap\bar{S}^{\phi(a)}, \qquad 0<a<1/2,\quad a\neq 1/3,
\end{equation}
obtained by taking complements in \eqref{eq:D0-lower}, \eqref{eq:D0-upper}, \eqref{eq:Dinf-lower} and \eqref{eq:Dinf-upper}.
\end{proof}

\begin{remark}
{\rm
For $a<1/2$, $F_a$ belongs to the class of functions considered by Jordan et al. \cite{JKPS}. Their Theorem 1.1 implies immediately that $\dim_H \DD_\infty(a)=\dim_H \mathcal{N}(a)$, and gives an implicit formula for the value of this dimension in terms of pressure functions. However, it seems difficult to obtain the dimension explicitly from their formula as this involves solving a transcendental equation. For this specific case, considering the simple self-affine structure of $F_a$, our approach above is easier and quite natural.
}
\end{remark}

\section{Beta-expansions and the size of $\DD_\infty(a)$} \label{sec:beta-expansions}

The purpose of this section is to prove Theorem \ref{cor:golden-ratio}, and to examine the set $\DD_\infty(a)$ in more detail when $1/2<a<\rho$. 
We will mostly work on the symbol space $\Omega:=\{0,1\}^\NN$. Denote a generic element of $\Omega$ by $\omega=(\omega_1,\omega_2,\dots)$. We equip $\Omega$ with the family of metrics $\{\varrho_\lambda\}_{0<\lambda<1}$ defined by $\varrho_\lambda(\omega,\eta)=\lambda^{\inf\{n:\omega_n\neq\eta_n\}}$. Since the Hausdorff dimension of a subset of $\Omega$ depends on the metric used, we will let $\dim_H^{(\lambda)}E$ denote the Hausdorff dimension of $E\subset\Omega$ induced by the metric $\varrho_\lambda$. It is straightforward to verify that, for $0<\lambda_1,\lambda_2<1$, 
\begin{equation}
\dim_H^{(\lambda_1)}E=\frac{\log{\lambda_2}}{\log{\lambda_1}}\dim_H^{(\lambda_2)}E, \qquad E\subset\Omega.
\label{eq:Hausdorff-dimension-conversion}
\end{equation}

Let $\sigma$ denote the (left) shift map on $\Omega$: $\sigma(\omega)=(\omega_2,\omega_3,\dots)$. For $0<\lambda<1$ and $\omega\in\Omega$, let
\begin{equation*}
\Pi_\lambda(\omega):=\sum_{n=1}^\infty \omega_n \lambda^n.
\end{equation*}
Let a bar denote reflection: $\bar{0}=1$, $\bar{1}=0$, and for $\omega=(\omega_1,\omega_2,\dots)\in\Omega$, $\bar{\omega}=(\bar{\omega}_1,\bar{\omega}_2,\dots)$. Define the sets
\begin{equation*}
\UU_\lambda:=\{\omega\in\Omega: \Pi_\lambda(\sigma^k(\omega))<1\ \mbox{and}\ \Pi_\lambda(\sigma^k(\bar{\omega}))<1\ \mbox{for all $k\in\ZZ_+$}\},
\end{equation*}
and
\begin{equation*}
\widetilde{\UU}_\lambda:=\bigcup_{\delta>0}\widetilde{\UU}_{\lambda,\delta},
\end{equation*}
where
\begin{equation*}
\widetilde{\UU}_{\lambda,\delta}:=\{\omega\in\Omega: \Pi_\lambda(\sigma^k(\omega))<1-\delta\ \mbox{and}\ \Pi_\lambda(\sigma^k(\bar{\omega}))<1-\delta\ \mbox{for all $k\in\ZZ_+$}\}.
\end{equation*}
Let $\Phi:\Omega\to\CC$ be given by
\begin{equation*}
\Phi(\omega):=2\Pi_{1/3}(\omega), \qquad\omega\in\Omega.
\end{equation*}
Finally, introduce the family of affine maps
\begin{equation*}
\psi_{n,k}(x):=3^{-n}(x+k), \qquad n\in\NN, \quad k=0,1,\dots,3^n-1.
\end{equation*}
It follows from Theorem \ref{thm:main}(i) that
\begin{equation}
\bigcup_{n,k}\psi_{n,k}\big(\Phi\big(\widetilde{\UU}_a\big)\big)\subset \DD_\infty(a)\subset \bigcup_{n,k}\psi_{n,k}(\Phi(\UU_a)),
\label{eq:key-sandwich}
\end{equation}
where the union is over $n\in\NN$ and $k=0,1,\dots,3^n-1$. Since Hausdorff dimension is countably stable and unaffected by affine transformations, it is therefore enough to investigate the cardinality and Hausdorff dimension of the sets $\UU_a$ and $\widetilde{\UU}_a$. For this we can use the theory of $\beta$-expansions (e.g. \cite{GlenSid,JSS,Parry}). For $1<\beta<2$ and a real number $0<x<1$, a {\em $\beta$-expansion} of $x$ is a representation of the form 
\begin{equation}
x=\sum_{n=1}^\infty \omega_n\beta^{-n}=\Pi_{1/\beta}(\omega), 
\label{eq:beta-expansion}
\end{equation}
where $\omega=(\omega_1,\omega_2,\dots)\in\Omega$. In general, $\beta$-expansions are not unique. The {\em greedy} $\beta$-expansion of $x$ is the lexicographically largest $\omega$ satisfying \eqref{eq:beta-expansion}, which chooses a $1$ whenever possible; and the {\em lazy} expansion is the lexicographically smallest such $\omega$, which chooses a $0$ whenever possible. A number $x$ has a unique $\beta$-expansion if its greedy and lazy $\beta$-expansions are the same.

Let $1/2<\lambda<1$ and $\beta=1/\lambda$. Let $\VV_\lambda$ be the set of $\omega\in\Omega$ such that 
\begin{equation*}
\frac{2\lambda-1}{1-\lambda}<\Pi_\lambda(\omega)<1
\end{equation*}
and $\Pi_\lambda(\omega)$ has a unique $\beta$-expansion. Note that for such $\omega$, $\Pi_\lambda(\bar{\omega})$ also lies in $((2\lambda-1)/(1-\lambda),1)$, since $\Pi_\lambda(\omega)+\Pi_\lambda(\bar{\omega})=\lambda/(1-\lambda)$. Let $1=\sum_{n=1}^\infty d_n\beta^{-n}$ be the greedy $\beta$-expansion of $1$; but if there is an $n$ such that $d_n=1$ and $d_j=0$ for all $j>n$, we replace $(d_j)$ by the sequence $(d_j'):=(d_1\dots d_{n-1}0)^\infty$ and rename this new sequence again as $(d_j)$. Put $d=(d_1,d_2,\dots)$. It is well known (e.g. \cite[Lemma 4]{GlenSid}) that
\begin{equation*}
\VV_\lambda=\{\omega\in\Omega: \sigma^k(\omega)\prec d\ \mbox{and}\ \sigma^k(\bar{\omega})\prec d\ \mbox{for all}\ k\in\ZZ_+\},
\end{equation*}
where $\prec$ denotes the (strict) lexicographic order on $\Omega$.

\begin{lemma} \label{lem:equivalence}
Let $1/2<\lambda<1$. Then $\UU_\lambda=\VV_\lambda$.
\end{lemma}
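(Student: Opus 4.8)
\textbf{Proof plan for Lemma \ref{lem:equivalence}.}

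The plan is to show the two set equalities $\UU_\lambda\subseteq\VV_\lambda$ and $\VV_\lambda\subseteq\UU_\lambda$ by translating the analytic conditions defining $\UU_\lambda$ into the lexicographic conditions defining $\VV_\lambda$ via the description $\VV_\lambda=\{\omega: \sigma^k(\omega)\prec d\ \text{and}\ \sigma^k(\bar\omega)\prec d\ \text{for all }k\in\ZZ_+\}$ recalled just above. The key link is the elementary observation that, for $0<\lambda<1$, the map $\Pi_\lambda$ is nondecreasing with respect to the lexicographic order on $\Omega$, and strictly increasing except on pairs of sequences representing the same real number; more precisely, for $\omega,\eta\in\Omega$ one has $\Pi_\lambda(\omega)<1$ essentially iff $\omega$ does not ``exceed'' the threshold sequence, where the threshold sequence is exactly the quasi-greedy expansion $d$ of $1$ in base $\beta=1/\lambda$ (this is the content of \cite[Lemma 4]{GlenSid} and the standard theory of $\beta$-expansions, which I would cite). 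So the two conditions ``$\Pi_\lambda(\sigma^k(\omega))<1$ for all $k$'' and ``$\sigma^k(\omega)\prec d$ for all $k$'' should match up once the boundary/tie cases are handled carefully.

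First I would record the key order-monotonicity facts: (a) if $\omega\prec\eta$ then $\Pi_\lambda(\omega)\le\Pi_\lambda(\eta)$, with strict inequality unless one of them is an expansion ending in $0^\infty$ and the other the corresponding expansion ending in $d$-tail; (b) the supremum of $\Pi_\lambda(\eta)$ over all $\eta\prec d$ with $\eta\neq d$ is exactly $1$, not attained, and $\Pi_\lambda(d)=1$ by the choice of $d$ (the modification replacing a terminating greedy expansion by its quasi-greedy version ensures $\Pi_\lambda(d)=1$ and that $d$ has no tail of $0$'s). Next, for the inclusion $\VV_\lambda\subseteq\UU_\lambda$: if $\sigma^k(\omega)\prec d$ for all $k$, then $\Pi_\lambda(\sigma^k(\omega))\le 1$; I must upgrade $\le$ to $<$. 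The point is that $\Pi_\lambda(\sigma^k(\omega))=1$ would force $\sigma^k(\omega)$ to be an expansion of $1$, hence lexicographically $\succeq$ the lazy expansion of $1$; but since $d$ is (quasi-)greedy and $\sigma^k(\omega)\prec d$, a short argument with the reflection identity $\Pi_\lambda(\bar\eta)=\lambda/(1-\lambda)-\Pi_\lambda(\eta)$ and the hypothesis $\sigma^k(\bar\omega)\prec d$ rules this out — essentially because both $\omega$ and $\bar\omega$ being bounded by $d$ leaves no room for either to represent $1$ exactly. The same reasoning applied to $\bar\omega$ gives $\Pi_\lambda(\sigma^k(\bar\omega))<1$, so $\omega\in\UU_\lambda$.

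For the reverse inclusion $\UU_\lambda\subseteq\VV_\lambda$: suppose $\omega\in\UU_\lambda$, so $\Pi_\lambda(\sigma^k(\omega))<1$ and $\Pi_\lambda(\sigma^k(\bar\omega))<1$ for all $k$. I need $\sigma^k(\omega)\prec d$ for all $k$. If not, then $\sigma^k(\omega)\succeq d$ for some $k$; since $\Pi_\lambda(d)=1$ and $\Pi_\lambda$ is order-preserving, this gives $\Pi_\lambda(\sigma^k(\omega))\ge 1$ unless $\sigma^k(\omega)$ and $d$ are a tie-pair — but $d$ has no $0$-tail, so the only way $\sigma^k(\omega)\succeq d$ with $\Pi_\lambda(\sigma^k(\omega))<1$ is impossible, a contradiction. (Here I should be slightly careful: $\sigma^k(\omega)\succ d$ strictly already forces $\Pi_\lambda>1$; equality $\sigma^k(\omega)=d$ forces $\Pi_\lambda=1$; either contradicts $<1$.) Hence $\sigma^k(\omega)\prec d$ for all $k$, and symmetrically $\sigma^k(\bar\omega)\prec d$ for all $k$, so $\omega\in\VV_\lambda$ by the quoted characterization. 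I anticipate the main obstacle to be the careful bookkeeping of boundary cases — precisely which tie-pairs $\Pi_\lambda$ identifies, and verifying that the normalization of $d$ (quasi-greedy, no $0$-tail) eliminates exactly the borderline sequences so that the strict inequalities in the definition of $\UU_\lambda$ correspond cleanly to the strict lexicographic inequalities in $\VV_\lambda$; the reflection identity $\Pi_\lambda(\omega)+\Pi_\lambda(\bar\omega)=\lambda/(1-\lambda)$ is what keeps the $\omega$ and $\bar\omega$ conditions symmetric throughout.
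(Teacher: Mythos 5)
Your argument rests on the claim (a) that $\Pi_\lambda$ is nondecreasing with respect to the lexicographic order on $\Omega$. This is false for every $\lambda>1/2$, which is exactly the range of the lemma: for example $01^\infty\prec 10^\infty$ but $\Pi_\lambda(01^\infty)=\lambda^2/(1-\lambda)>\lambda=\Pi_\lambda(10^\infty)$ whenever $\lambda>1/2$. This failure of monotonicity is the very source of non-uniqueness of $\beta$-expansions, so it cannot be waved away as a "boundary/tie" issue. Concretely, your claim (b) also fails: for $\lambda>(\sqrt5-1)/2$ the sequence $\eta=01^\infty$ satisfies $\eta\prec d$ yet $\Pi_\lambda(\eta)>1$, so a single lexicographic inequality $\sigma^k(\omega)\prec d$ does \emph{not} give $\Pi_\lambda(\sigma^k(\omega))\le 1$. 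Both of your inclusions lean on this false monotonicity, so as written neither direction is established; the subsequent discussion of tie-pairs and the reflection identity does not repair this.

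The two directions in fact require different, and genuinely nontrivial, inputs. For $\UU_\lambda\subseteq\VV_\lambda$ one can argue pointwise, but the correct mechanism is the greedy property of $d$ rather than monotonicity of $\Pi_\lambda$: if $\Pi_\lambda(\omega)<1$ and $\omega\succ d$, let $n$ be the first index where they differ (so $\omega_n=1$, $d_n=0$); then $d_1\dots d_{n-1}1$ has $\Pi_\lambda$-value at most $\Pi_\lambda(\omega)<1$ and hence extends to a $\beta$-expansion of $1$ lexicographically larger than $d$, contradicting that $d$ is the greedy expansion. (This is the route the paper takes.) For the converse $\VV_\lambda\subseteq\UU_\lambda$, the implication from ``$\sigma^k(\omega)\prec d$ for \emph{all} $k$'' to ``$\Pi_\lambda(\sigma^k(\omega))<1$ for all $k$'' is precisely Parry's lemma; its proof uses the full strength of the hypothesis at every shift simultaneously (a telescoping/induction argument), not a shift-by-shift order comparison. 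You need either to cite Parry or to reproduce that argument; the single-shift reasoning you propose cannot work.
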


\begin{proof}
Let $\lambda$, $\beta$ and $d$ have the relationships outlined above.
The lemma will follow once we establish the equivalence
\begin{equation}
\Pi_\lambda(\sigma^k(\omega))<1\quad\forall\, k\in\ZZ_+ \qquad \Longleftrightarrow \qquad \sigma^k(\omega)\prec d \quad\forall\, k\in\ZZ_+.
\label{eq:shift-equivalenvce}
\end{equation}
Assume that $\Pi_\lambda(\omega)<1$, and suppose that $\omega\succeq d$. Since $\Pi_\lambda(d)=1$ by definition, $\omega\neq d$ and hence there is $n\in\NN$ such that $\omega_1\dots\omega_{n-1}=d_1\dots d_{n-1}$ and $\omega_{n}=1$, $d_n=0$. Define now the finite sequence $(\tilde{d}_j)_{j=1}^n$ by $\tilde{d}_j=d_j$ for $j=1,\dots,n-1$, and $\tilde{d}_n=1$. Then $(\tilde{d}_j)$ can be extended to a (nonterminating) $\beta$-expansion of $1$ which is greater than $d$ in the lexicographic order. This contradicts $d$ being the greedy expansion of $1$. Thus, $\omega\prec d$. Since this argument holds for arbitrary $\omega\in\Omega$, the forward direction of \eqref{eq:shift-equivalenvce} follows. The converse is proved in \cite[Lemma 1]{Parry}.
\end{proof}

The next lemma is the key to the proof of Theorem \ref{cor:golden-ratio}.

\begin{lemma}[Glendinning and Sidorov \cite{GlenSid}] \label{lem:Glenn-Sid}
The set $\VV_\lambda$ is countable for $\lambda>\hat{a}$, but has positive Hausdorff dimension for $1/2<\lambda<\hat{a}$.
\end{lemma}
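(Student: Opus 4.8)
The statement to prove is Lemma \ref{lem:Glenn-Sid}, attributed to Glendinning and Sidorov: $\VV_\lambda$ is countable for $\lambda > \hat a$ and has positive Hausdorff dimension for $1/2 < \lambda < \hat a$.

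My plan is to reduce everything to the univoque set for base $\beta = 1/\lambda$ and then quote the Glendinning--Sidorov dichotomy directly. First I would recall the standard identification: by the characterization $\VV_\lambda = \{\omega \in \Omega : \sigma^k(\omega) \prec d \text{ and } \sigma^k(\bar\omega) \prec d \text{ for all } k \in \ZZ_+\}$ established just before the lemma (and by Lemma \ref{lem:equivalence}), membership in $\VV_\lambda$ says exactly that $x := \Pi_\lambda(\omega)$ has a unique $\beta$-expansion and lies strictly between $(2\lambda-1)/(1-\lambda)$ and $1$. The set of $x \in (0,1)$ with a unique $\beta$-expansion, call it $\UU(\beta)$, is exactly the object whose metamorphosis as $\beta$ varies was determined by Glendinning and Sidorov. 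Their theorem states: $\UU(\beta)$ is trivial (just the two endpoint-type points, hence $\VV_\lambda$ is essentially empty or a single orbit) for $1 < \beta \le \rho^{-1} = (1+\sqrt5)/2$; it is countably infinite for $\rho^{-1} < \beta < \beta_{KL}$ where $\beta_{KL} = 1/\hat a$ is the Komornik--Loreti constant; and it has positive Hausdorff dimension (indeed dimension $>0$) for $\beta > \beta_{KL}$, equivalently $\lambda < \hat a$.

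So the proof is essentially two steps. Step one: translate the combinatorial description of $\VV_\lambda$ into ``$\Pi_\lambda$ maps $\VV_\lambda$ bijectively (in fact bi-Lipschitz for the relevant metrics) onto $\UU(\beta) \cap \big((2\lambda-1)/(1-\lambda),\, 1\big)$.'' The map $\omega \mapsto \Pi_\lambda(\omega)$ restricted to sequences with unique expansion is injective and its inverse is Hölder/Lipschitz-continuous with respect to the metric $\varrho_\lambda$, so Hausdorff dimension and cardinality are both preserved up to the harmless conversion factor in \eqref{eq:Hausdorff-dimension-conversion}; removing the countable set of points whose unique expansion is eventually periodic at the two ends, and intersecting with an open interval, changes neither countability nor positivity of dimension (the univoque set is invariant under $\sigma$ and self-similar enough that intersecting with a subinterval around the ``middle'' point $\lambda$ loses only countably many points, or one invokes that $\UU(\beta)$ has no isolated structure affecting dimension). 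Step two: apply the Glendinning--Sidorov theorem \cite{GlenSid} for the two regimes $\lambda > \hat a$ and $1/2 < \lambda < \hat a$. This gives countability in the first regime and $\dim_H^{(\lambda)} \VV_\lambda > 0$ in the second, as claimed.

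The main obstacle, and the only place requiring genuine care rather than citation, is verifying that passing between $\VV_\lambda$ (a symbolic set) and $\UU(\beta)$ (a subset of $\RR$) does not distort the dichotomy: one must check that $\Pi_\lambda$ is injective on $\VV_\lambda$ (which is exactly the content of ``unique $\beta$-expansion''), that its inverse is bi-Lipschitz for $\varrho_\lambda$ versus Euclidean distance on the image (straightforward since $\lambda^{-n} \le |\Pi_\lambda(\omega)-\Pi_\lambda(\eta)|$-type bounds hold once the first disagreement is at position $n$ and uniqueness pins down the tails), and that excising the eventually-periodic boundary points and intersecting with an open interval is a countable, dimension-neutral operation. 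Once these routine verifications are in place, the lemma is immediate from \cite{GlenSid}. I would present this compactly, since the paper explicitly credits the result to Glendinning and Sidorov and only needs it as an input.
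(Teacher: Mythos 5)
Your proposal is correct and takes essentially the same route as the paper: the lemma is quoted directly from Glendinning and Sidorov, and since $\VV_\lambda$ is by definition the symbolic set of unique $\beta$-expansions (for $\beta=1/\lambda$) restricted to the interval $\bigl((2\lambda-1)/(1-\lambda),1\bigr)$, the dichotomy at the Komornik--Loreti constant transfers immediately; the paper offers no proof beyond the citation. Your extra care about the bi-Lipschitz transfer of dimension is consistent with how the paper later handles this (via Lemma \ref{lem:bi-Lipschitz} and \eqref{eq:Hausdorff-dimension-conversion}) but is not needed for the lemma as stated.
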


The next two lemmas collect some more useful results from the literature. They are due to Jordan et al. \cite{JSS}, whose primary objective was to analyze the multifractal spectrum of Bernoulli convolutions.

\begin{lemma}[\cite{JSS}, Lemma 2.3] \label{lem:JSS-sandwich}
If $\lambda_1<\lambda_2$, then $\UU_{\lambda_1}\supset \widetilde{\UU}_{\lambda_1}\supset \UU_{\lambda_2}$.
\end{lemma}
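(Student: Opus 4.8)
\textbf{Proof proposal for Lemma \ref{lem:JSS-sandwich}.}

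The plan is to prove the two inclusions separately, both directly from the definitions of $\UU_\lambda$ and $\widetilde{\UU}_\lambda$. The first inclusion, $\UU_{\lambda_1}\supset\widetilde{\UU}_{\lambda_1}$, is immediate: if $\omega\in\widetilde{\UU}_{\lambda_1}$ then $\omega\in\widetilde{\UU}_{\lambda_1,\delta}$ for some $\delta>0$, so $\Pi_{\lambda_1}(\sigma^k(\omega))<1-\delta<1$ and likewise for $\bar{\omega}$, whence $\omega\in\UU_{\lambda_1}$. The substance of the lemma is the second inclusion $\widetilde{\UU}_{\lambda_1}\supset\UU_{\lambda_2}$, and the key point is monotonicity of $\Pi_\lambda(\omega)$ in $\lambda$ together with a uniform gap.

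For the second inclusion, fix $\omega\in\UU_{\lambda_2}$. First I would observe that the shift orbit of $\omega$ (and of $\bar\omega$) generates only finitely many ``distinct tails'' in a suitable lexicographic sense — more precisely, I would invoke Lemma \ref{lem:equivalence} and the characterization of $\VV_{\lambda_2}$ to note that $\sigma^k(\omega)\prec d^{(\lambda_2)}$ and $\sigma^k(\bar\omega)\prec d^{(\lambda_2)}$ for all $k$, where $d^{(\lambda_2)}$ is the (modified) greedy expansion of $1$ in base $1/\lambda_2$. Alternatively, and more self-containedly, the cleanest route is this: since $\Pi_\lambda(\eta)=\sum_n\eta_n\lambda^n$ is, for each fixed $\eta\in\Omega$, a continuous and \emph{non-decreasing} function of $\lambda$ on $(0,1)$, we have $\Pi_{\lambda_1}(\sigma^k(\omega))\le\Pi_{\lambda_2}(\sigma^k(\omega))<1$ for every $k$, and similarly for $\bar\omega$; so $\omega\in\UU_{\lambda_1}$ is already clear. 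To upgrade this to $\omega\in\widetilde{\UU}_{\lambda_1}$ one needs a single $\delta>0$ that works for all $k$ simultaneously, and for this I would bound $\Pi_{\lambda_1}(\sigma^k(\omega))$ away from $\Pi_{\lambda_2}(\sigma^k(\omega))$. Writing $\lambda_1<\lambda_2$ and estimating term by term,
\begin{equation*}
\Pi_{\lambda_2}(\eta)-\Pi_{\lambda_1}(\eta)=\sum_{n=1}^\infty\eta_n(\lambda_2^n-\lambda_1^n)\ge 0,
\end{equation*}
so this difference alone is not bounded below uniformly in $\eta$. The fix is to use that each $\sigma^k(\omega)$ lies in $\UU_{\lambda_2}$, hence $\Pi_{\lambda_2}(\sigma^k(\omega))<1$, but moreover these values stay bounded away from $1$: the set $\{\Pi_{\lambda_2}(\eta):\eta\in\UU_{\lambda_2}\}$ is easily seen to be a closed subset of $[0,1)$ (it is the continuous image under $\Pi_{\lambda_2}$ of the closed, hence compact, set $\UU_{\lambda_2}\subset\Omega$, using that $\UU_{\lambda_2}$ is closed in the product topology because the defining conditions $\Pi_{\lambda_2}(\sigma^k(\cdot))\le 1$ are closed — and one checks no element attains the value $1$). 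Therefore $\sup_{\eta\in\UU_{\lambda_2}}\Pi_{\lambda_2}(\eta)=:1-\delta_0<1$ for some $\delta_0>0$. Since $\sigma^k(\omega),\sigma^k(\bar\omega)\in\UU_{\lambda_2}$ for all $k$ (the defining conditions are shift-invariant and reflection-symmetric), we get $\Pi_{\lambda_1}(\sigma^k(\omega))\le\Pi_{\lambda_2}(\sigma^k(\omega))\le 1-\delta_0$ and likewise for $\bar\omega$, for all $k\in\ZZ_+$. Hence $\omega\in\widetilde{\UU}_{\lambda_1,\delta_0}\subset\widetilde{\UU}_{\lambda_1}$, completing the inclusion.

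The main obstacle is exactly the passage from the pointwise strict inequality $\Pi_{\lambda_2}(\sigma^k(\omega))<1$ to a uniform gap independent of $k$; naively the per-term comparison between bases $\lambda_1$ and $\lambda_2$ degrades as $k$ grows (shifting kills the early, most significant digits), so one cannot extract $\delta$ from the base change alone. The resolution — compactness of $\UU_{\lambda_2}$ in $\Omega$ and closedness of its image under $\Pi_{\lambda_2}$ in $[0,1)$ — is the one nontrivial ingredient, and I would want to double-check that $\UU_{\lambda_2}$ is genuinely closed: the conditions defining it are $\Pi_{\lambda_2}(\sigma^k(\omega))<1$, which are \emph{open}, so $\UU_{\lambda_2}$ is a countable intersection of open sets and need not be closed. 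The honest fix is then to replace ``$\sup$ over $\UU_{\lambda_2}$'' by working inside a single $\widetilde\UU_{\lambda_2,\delta'}$-type set, or — following Jordan, Shmerkin and Solomyak \cite{JSS} — to argue via the lexicographic characterization (Lemma \ref{lem:equivalence}): membership $\omega\in\UU_{\lambda_2}$ forces all tails $\sigma^k(\omega),\sigma^k(\bar\omega)$ to be lexicographically strictly below the quasi-greedy expansion $d^{(\lambda_2)}$ of $1$, and because $d^{(\lambda_2)}$ is infinite, a finite-prefix comparison yields a uniform $\varepsilon>0$ with $\Pi_{\lambda_2}(\sigma^k(\omega))\le 1-\varepsilon$ for all $k$ (comparing, say, the first $M$ digits suffices once $\lambda_2^M$ is small). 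This is where I expect the real work to sit, and I would present it using the lexicographic route to sidestep the closedness subtlety.
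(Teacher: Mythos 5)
Your first inclusion and your reduction of the problem to finding a uniform gap are fine, but the step you yourself flag as ``where the real work sits'' is exactly where the argument breaks, and neither of your proposed fixes closes it. The compactness route fails for the reason you note ($\UU_{\lambda_2}$ is cut out by strict inequalities, hence is a countable intersection of open sets and not closed), and in any case there is no reason for $\sup_{\eta\in\UU_{\lambda_2}}\Pi_{\lambda_2}(\eta)$ to be bounded away from $1$. The lexicographic route then asserts that $\sigma^k(\omega)\prec d$ for all $k$ yields a \emph{uniform} $\varepsilon>0$ with $\Pi_{\lambda_2}(\sigma^k(\omega))\le 1-\varepsilon$ ``by a finite-prefix comparison.'' This does not follow: the position $n(k)$ of the first digit at which $\sigma^k(\omega)$ drops below $d$ is in general unbounded in $k$ (tails of $\omega$ may agree with $d$ on arbitrarily long prefixes while still being strictly smaller), and the resulting estimate $1-\Pi_{\lambda_2}(\sigma^k(\omega))\ge\lambda_2^{n(k)}-\sum_{j>n(k)}(1-d_j)\lambda_2^{j}$ is not even positive when $\lambda_2>1/2$. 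Note also that if your claim were true it would prove $\UU_{\lambda}\subset\widetilde{\UU}_{\lambda}$ for every $\lambda$, i.e.\ that the two sets coincide --- a far stronger statement than the lemma, and one that would make the definition of $\widetilde{\UU}_\lambda$ pointless. The uniform gap genuinely comes from passing to the strictly smaller base $\lambda_1$, not from the structure of $\UU_{\lambda_2}$ alone.

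The missing idea is a multiplicative, rather than additive, comparison of the two bases: for every $n\ge 1$ one has $\lambda_1^{\,n}=(\lambda_1/\lambda_2)^n\lambda_2^{\,n}\le(\lambda_1/\lambda_2)\,\lambda_2^{\,n}$, whence for any $\eta\in\Omega$ with $\Pi_{\lambda_2}(\eta)<1$,
\begin{equation*}
\Pi_{\lambda_1}(\eta)=\sum_{n=1}^\infty\eta_n\lambda_1^{\,n}\le\frac{\lambda_1}{\lambda_2}\sum_{n=1}^\infty\eta_n\lambda_2^{\,n}<\frac{\lambda_1}{\lambda_2}=1-\delta,\qquad \delta:=1-\frac{\lambda_1}{\lambda_2}>0 .
\end{equation*}
Applying this with $\eta=\sigma^k(\omega)$ and $\eta=\sigma^k(\bar{\omega})$ for all $k\in\ZZ_+$ gives $\UU_{\lambda_2}\subset\widetilde{\UU}_{\lambda_1,\delta}\subset\widetilde{\UU}_{\lambda_1}$, with $\delta$ depending only on $\lambda_1$ and $\lambda_2$. (The paper does not reprove this statement --- it is quoted from \cite{JSS} --- and the display above is essentially the argument given there.)
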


\begin{lemma}[\cite{JSS}, Lemma 2.7]  \label{lem:bi-Lipschitz}
For $\lambda>1/2$, the restriction of $\Pi_\lambda$ to $\UU_\lambda$ is bi-Lipschitz with respect to the metric $\varrho_\lambda$.
\end{lemma}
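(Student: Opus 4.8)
The plan is to exhibit explicit constants $0<c\le C$ with $c\,\varrho_\lambda(\omega,\eta)\le|\Pi_\lambda(\omega)-\Pi_\lambda(\eta)|\le C\,\varrho_\lambda(\omega,\eta)$ for all $\omega,\eta\in\UU_\lambda$. The Lipschitz (upper) bound is the trivial half and in fact holds on all of $\Omega$: if $\omega\neq\eta$ first disagree at position $m:=\inf\{n:\omega_n\neq\eta_n\}$, then
\[
|\Pi_\lambda(\omega)-\Pi_\lambda(\eta)|=\Bigl|\sum_{n=m}^\infty(\omega_n-\eta_n)\lambda^n\Bigr|\le\sum_{n=m}^\infty\lambda^n=\frac{\lambda^m}{1-\lambda}=\frac{1}{1-\lambda}\,\varrho_\lambda(\omega,\eta),
\]
so $C:=1/(1-\lambda)$ works.

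For the lower bound I would first translate membership in $\UU_\lambda$ into two-sided control on tails. Since reflection commutes with $\sigma$ and $\Pi_\lambda(\zeta)+\Pi_\lambda(\bar\zeta)=\lambda/(1-\lambda)$ for every $\zeta\in\Omega$, the requirement $\Pi_\lambda(\sigma^k(\bar\omega))<1$ is equivalent to $\Pi_\lambda(\sigma^k(\omega))>\lambda/(1-\lambda)-1=(2\lambda-1)/(1-\lambda)$. Hence for each $\omega\in\UU_\lambda$ and each $k\in\ZZ_+$,
\[
\frac{2\lambda-1}{1-\lambda}<\Pi_\lambda(\sigma^k(\omega))<1,
\]
and the lower endpoint is \emph{strictly positive} exactly because $\lambda>1/2$. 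Now let $\omega,\eta\in\UU_\lambda$ with $\omega\neq\eta$, let $m$ be their first position of disagreement, and assume without loss of generality that $\omega_m=1$ and $\eta_m=0$. Pulling out the factor $\lambda^m$ gives
\[
\Pi_\lambda(\omega)-\Pi_\lambda(\eta)=\lambda^m\Bigl(1+\Pi_\lambda(\sigma^m(\omega))-\Pi_\lambda(\sigma^m(\eta))\Bigr)>\lambda^m\Bigl(1+\frac{2\lambda-1}{1-\lambda}-1\Bigr)=\frac{2\lambda-1}{1-\lambda}\,\varrho_\lambda(\omega,\eta),
\]
where I used the tail lower bound for $\omega$ and the tail upper bound for $\eta$. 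Thus $c:=(2\lambda-1)/(1-\lambda)>0$ works; in particular $\Pi_\lambda$ is injective on $\UU_\lambda$, so its restriction is a bi-Lipschitz homeomorphism onto $\Pi_\lambda(\UU_\lambda)\subset\RR$.

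There is no genuinely hard step here; the one idea carrying the argument is that the \emph{two-sided} defining requirement of $\UU_\lambda$ — bounding the tails of $\omega$ \emph{and} of its reflection $\bar\omega$ by $1$ — confines every shifted tail $\Pi_\lambda(\sigma^k(\omega))$ to the interval $\bigl((2\lambda-1)/(1-\lambda),\,1\bigr)$, whose positive left endpoint keeps the leading contribution $\lambda^m$ from being cancelled by the competing tails. This is the only place the hypothesis $\lambda>1/2$ enters, and it also explains why the statement fails for $\lambda\le1/2$: there the left endpoint is nonpositive, the forbidden cancellation can occur, and $|\Pi_\lambda(\omega)-\Pi_\lambda(\eta)|$ can be made arbitrarily small relative to $\varrho_\lambda(\omega,\eta)$.
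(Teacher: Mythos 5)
Your proof is correct: the identity $\Pi_\lambda(\zeta)+\Pi_\lambda(\bar\zeta)=\lambda/(1-\lambda)$ converts the two-sided defining condition of $\UU_\lambda$ into the tail confinement $\Pi_\lambda(\sigma^k(\omega))\in\bigl((2\lambda-1)/(1-\lambda),1\bigr)$, and the factorization at the first disagreement then yields the lower Lipschitz constant $(2\lambda-1)/(1-\lambda)>0$, with $1/(1-\lambda)$ as the trivial upper constant. The paper itself gives no proof (it imports the lemma from \cite{JSS}), and your argument is essentially the standard one used there, so there is nothing to add.
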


Let $\mathcal{A}_\lambda:=\Pi_\lambda(\UU_\lambda)$. The following crucial result, which was already stated in \cite{GlenSid} without proof, was proved only very recently by Komornik et al. \cite{KKL}.

\begin{lemma}[\cite{KKL}, Theorems 1.3, 1.4] \label{lem:dimension-continuity}
\begin{enumerate}[(i)]
\item Let $N_n(\lambda)$ denote the number of words in $\{0,1\}^n$ that can be extended to a sequence in $\UU_\lambda$. The limit
\begin{equation}
h(\UU_\lambda):=\lim_{n\to\infty}\frac{\log N_n(\lambda)}{n}
\label{eq:entropy-definition}
\end{equation}
exists, and
\begin{equation*}
\dim_H \mathcal{A}_\lambda=-\frac{h(\UU_\lambda)}{\log\lambda}.
%\label{eq:Hausdorff-dimension-entropy-formula}
\end{equation*}
\item The function $\lambda \mapsto \dim_H \mathcal{A}_\lambda$ is continuous in $\lambda$ on $1/2<\lambda<\hat{a}$.
\end{enumerate}
\end{lemma}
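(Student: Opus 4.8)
This is Theorems 1.3 and 1.4 of \cite{KKL}, so the honest plan is simply to cite it and use it as a black box; but let me indicate the shape an argument would take.

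\textbf{Part (i).} For the existence of the limit in \eqref{eq:entropy-definition} the plan is to verify subadditivity of $n\mapsto\log N_n(\lambda)$ and apply Fekete's lemma. The relevant point is that $\UU_\lambda$ is $\sigma$-invariant: since $\overline{\sigma^k\omega}=\sigma^k\bar\omega$, every shift $\sigma^m\omega$ of $\omega\in\UU_\lambda$ again satisfies all the defining inequalities, so $\sigma^m\omega\in\UU_\lambda$. Hence if a word $w\in\{0,1\}^{m+n}$ extends to some $\omega\in\UU_\lambda$, its length-$m$ prefix extends to $\omega$ and its length-$n$ suffix extends to $\sigma^m(\omega)\in\UU_\lambda$; as $w$ is recovered from this pair, $N_{m+n}(\lambda)\leq N_m(\lambda)N_n(\lambda)$, so $h(\UU_\lambda)=\inf_n n^{-1}\log N_n(\lambda)=\lim_n n^{-1}\log N_n(\lambda)$ exists. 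For the dimension identity I would first use Lemma \ref{lem:bi-Lipschitz} to replace $\dim_H\mathcal{A}_\lambda$ by $\dim_H^{(\lambda)}\UU_\lambda$. Covering $\UU_\lambda$ by its $N_n(\lambda)$ length-$n$ cylinders, each of $\varrho_\lambda$-diameter $\lambda^n$, gives the upper bound $\dim_H^{(\lambda)}\UU_\lambda\leq -h(\UU_\lambda)/\log\lambda$. The matching lower bound is where genuine work enters: for each $\eps>0$ one must build a ``freely concatenable'' Cantor subset of $\UU_\lambda$ --- for instance from a subshift of finite type lying inside $\UU_\lambda$, or from the near-periodic block structure of admissible words --- of $\varrho_\lambda$-dimension at least $-h(\UU_\lambda)/\log\lambda-\eps$, and apply the mass distribution principle. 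This hinges on the topological entropy of $\UU_\lambda$ being approximated from within by such structures, itself part of univoque-set theory.

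\textbf{Part (ii).} The easy half of the plan is monotonicity: by Lemma \ref{lem:JSS-sandwich}, $\UU_{\lambda_1}\supset\UU_{\lambda_2}$ whenever $\lambda_1<\lambda_2$, so each $N_n(\lambda)$ is non-increasing in $\lambda$; hence $h(\UU_\lambda)$ is non-increasing, and $\lambda\mapsto\dim_H\mathcal{A}_\lambda=-h(\UU_\lambda)/\log\lambda$ is monotone and therefore continuous except at countably many points. Upgrading this to full continuity on $(1/2,\hat a)$ is the crux, and the approach is combinatorial. By Lemma \ref{lem:equivalence}, $\UU_\lambda$ is completely governed by the quasi-greedy expansion $d$ of $1$ in base $1/\lambda$ via the admissibility rule $\sigma^k(\omega)\prec d$, $\sigma^k(\bar\omega)\prec d$, and $d$ varies monotonically with $\lambda$. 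Continuity is automatic on the (dense open) set of parameters where $\dim_H\mathcal{A}_\lambda$ is locally constant; on the complementary ``bifurcation set'', where $d$ has an aperiodic, nearly self-replicating structure, one shows that a small change in $\lambda$ alters $d$ only far out along the sequence, hence changes $N_n(\lambda)$ only for large $n$, so $h(\UU_\lambda)$ is unchanged in the limit. Continuity from the left and from the right are handled separately, one of them using the monotone squeeze $\UU_{\lambda'}\supset\widetilde{\UU}_\lambda\supset\UU_{\lambda''}$ of Lemma \ref{lem:JSS-sandwich} to transfer information between $\UU_\lambda$ and $\widetilde{\UU}_\lambda$. The devil's-staircase conclusion --- constancy of $\dim_H\mathcal{A}_\lambda$ on an open set of full measure --- then follows from the corresponding structure of the parameter space: the set of $\lambda$ at which $h(\UU_\lambda)$ is not locally constant is Lebesgue-null.

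\textbf{Main obstacle.} The hard parts are the lower bound in (i) and, above all, excluding jumps in (ii): showing that $h(\UU_\lambda)$ cannot drop discontinuously as $\lambda$ crosses a bifurcation value. Both require the fine structure theory of univoque sets and of the set of univoque bases from \cite{KKL} (building on \cite{GlenSid,JSS}); a self-contained proof would essentially reproduce that work, so in the paper the result is simply invoked as cited.
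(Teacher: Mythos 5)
The paper gives no proof of this lemma; it is imported verbatim from \cite{KKL}, Theorems 1.3 and 1.4, exactly as you propose, so your treatment matches the paper's. Your supplementary sketch is a reasonable outline of how the cited work proceeds, with one small caveat worth noting: monotonicity of $h(\UU_\lambda)$ in $\lambda$ does not by itself make $-h(\UU_\lambda)/\log\lambda$ monotone, since the factor $-1/\log\lambda$ is increasing in $\lambda$ (the monotone quantity the paper actually exploits later is $h(\UU_a)/\log 3$, via Theorem \ref{thm:size-and-dimension}(ii)), but none of this is needed since the result is invoked as a black box.
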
 

\begin{proof}[Proof of Theorem \ref{cor:golden-ratio}]
Let $\hat{a}<a<\rho$. Then by Lemma \ref{lem:equivalence}, Lemma \ref{lem:Glenn-Sid} and \eqref{eq:key-sandwich}, $\DD_\infty(a)$ is countable. In fact, we can give a very explicit description of $\DD_\infty(a)$ in this case. For $n\in\NN$, let $\hat{a}_n$ be the root in $(1/2,1)$ of $\sum_{j=1}^{2^n}t_j a^j=1$, where $(t_j)$ is the Thue-Morse sequence; see \eqref{eq:Thue-Morse}. Then $\hat{a}_1=\rho$ and $\hat{a}_n\searrow \hat{a}$ as $n\to\infty$, so for given $a\in(\hat{a},\rho)$, there is $n\in\NN$ such that $a\in[\hat{a}_{n+1},\hat{a}_n)$. As shown in \cite[Proposition 13]{GlenSid}, $\UU_a$ then contains only sequences ending in $(v_m\bar{v}_m)^\infty$ for some $m<n$, where $v_m=t_1\dots t_{2^m}$. Since such sequences lie in $\widetilde{\UU}_a$ if they lie in $\UU_a$, it follows that in fact $\widetilde{\UU}_a=\UU_a$. We now see from \eqref{eq:key-sandwich} that $\DD_\infty(a)$ consists exactly of those points whose ternary expansions are obtained by taking an arbitrary sequence from $\Omega$ ending in $(v_m\bar{v}_m)^\infty$ for some $m<n$, replacing all $1$'s by $2$'s, and appending the resulting sequence to an arbitrary finite prefix of digits in $\{0,1,2\}$. In particular, $\DD_\infty(a)$ is countably infinite and contains only rational points.

%Since we had already proved in Section \ref{sec:prelim} that $\DD_\infty(a)$ is nonempty in this case, it is clear from the self-affine structure of $F_a$ that $\DD_\infty(a)$ is countably infinite. 
% That it contains only rational points is explained in Remark \ref{rem:only-rational} below.

Next, let $1/2<a<\hat{a}$. Combining Lemmas \ref{lem:JSS-sandwich}, \ref{lem:bi-Lipschitz} and \ref{lem:dimension-continuity} it follows that
\begin{equation*}
\dim_H^{(\lambda)}\widetilde{\UU}_\lambda=\dim_H^{(\lambda)}\UU_\lambda=-\frac{h(\UU_\lambda)}{\log\lambda},
\end{equation*}
and this dimension is continuous in $\lambda$. Moreover, by Lemmas \ref{lem:equivalence} and \ref{lem:Glenn-Sid}, it is strictly positive for $1/2<\lambda<\hat{a}$. By \eqref{eq:Hausdorff-dimension-conversion} we then have 
\begin{equation}
\dim_H^{(1/3)}\widetilde{\UU}_\lambda=\dim_H^{(1/3)}\UU_\lambda=\frac{h(\UU_\lambda)}{\log 3}.
\label{eq:H-dim-formula-one-third}
\end{equation}
Now $\Pi_{1/3}$ is bi-Lipschitz even on the full domain $\Omega$ with respect to the metric $\varrho_{1/3}$ (the proof of this is essentially the same as that of \cite[Lemma 2.7]{JSS}), so \eqref{eq:H-dim-formula-one-third} and \eqref{eq:key-sandwich} give
\begin{equation}
\dim_H \DD_\infty(a)=\frac{h(\UU_a)}{\log 3}.
\label{eq:final-dimension-formula}
\end{equation}
This shows that $\dim_H \DD_\infty(a)$ is strictly positive and continuous in $a$ on $1/2<a<\hat{a}$. That it is nonincreasing in $a$ is immediate from Theorem \ref{thm:size-and-dimension}(ii). The final statement of Theorem \ref{cor:golden-ratio}, that $\dim_H \DD_\infty(a)$ decreases in the manner of a devil's staircase, follows from \eqref{eq:final-dimension-formula} in conjunction with Theorems 2.5 and 2.6 of Kong and Li \cite{KongLi}, which imply the existence of a countable collection $\{I_j\}_{j\in\NN}$ of disjoint subintervals of $(1/2,\hat{a})$ such that $\bigcup_j I_j$ has full Lebesgue measure in $(1/2,\hat{a})$ and $h(\UU_\lambda)$ is constant on $I_j$ for each $j$. (It is the topological entropy of a certain subshift of finite type.) 
\end{proof}

\begin{remark} \label{rem:critical-value}
{\rm
It is shown in \cite{GlenSid} that $\UU_{\hat{a}}$ is uncountable with zero Hausdorff dimension. This implies that $\dim_H\DD_\infty(\hat{a})=0$, but it remains unclear whether $\DD_\infty(\hat{a})$ is countable or uncountable.
}
\end{remark}

\begin{remark} \label{eq:dimension-bounds-for-D-infty}
{\rm
In principle, using \eqref{eq:final-dimension-formula} and \eqref{eq:entropy-definition} the Hausdorff dimension of $\DD_\infty(a)$ can be estimated to any desired accuracy for any given $a\in(1/2,\hat{a})$. But a closed-form expression in terms of $a$ appears to be out of reach. However, we can obtain fairly tight and simple bounds for $\dim_H \DD_\infty(a)$ as follows. For $k\in\NN$, let $a_k$ be the root in $(1/2,1]$ of $\sum_{j=1}^k a^j=1$ (so $a_1=1$, $a_2=\rho$, and more generally, $a_k$ is the $k$th multinacci number). Note that $a_k\searrow 1/2$, so for $a\in(1/2,\hat{a})$ there is $k$ such that $a\in[a_{k+1},a_k)$. Let $\mathcal{Q}_k$ be the set of sequences in $\Omega$ that do not contain $1^k$ or $0^k$ as a sub-word. It is not difficult to see that
\begin{equation}
a\in[a_{k+1},a_k) \qquad\Longrightarrow \qquad \mathcal{Q}_k\subset \widetilde{\UU}_a\subset\UU_a\subset \mathcal{Q}_{k+1}.
\label{eq:double-sandwich}
\end{equation}
(To see the first inclusion, note that the sequence in $\mathcal{Q}_k$ with the largest value under $\Pi_a$ is $\omega:=(1^{k-1}0)^\infty$, and $\Pi_a(\omega)=(a+a^2+\dots+a^{k-1})/(1-a^k)<1$.) The Hausdorff dimension of $\mathcal{Q}_k$ can be calculated exactly: it is
\begin{equation*}
\dim_H^{(1/3)} \mathcal{Q}_k=\frac{-\log(a_{k-1})}{\log 3}, \qquad k\geq 2.
\end{equation*}
(This can be seen, for instance, by using the graph directed construction of Mauldin and Williams \cite{MW}; alternatively, see \cite[Example 17]{GlenSid} for a sketch of a proof.) It therefore follows from \eqref{eq:key-sandwich}, \eqref{eq:double-sandwich} and the bi-Lipschitz property of $\Phi|_{\UU_a}$ under the metric $\varrho_{1/3}$, that
\begin{equation*}
a\in[a_{k+1},a_k) \qquad\Longrightarrow \qquad \frac{-\log(a_{k-1})}{\log 3}\leq \dim_H \DD_\infty(a)\leq \frac{-\log(a_{k})}{\log 3}.
\end{equation*}
Since $a_k$ converges to $1/2$ very rapidly, these bounds are quite tight even for moderate values of $k$. Moreover, they show that $\dim_H \DD_\infty(a)$ is continuous at $a=1/2$ (see Theorem \ref{thm:size-and-dimension}(ii)), and also that $\dim_H \DD_\infty(a)<\log_3 2=\dim_H\{x:f_n'(x)\to\pm\infty\}$ when $a>1/2$.

%As pointed out near the end of the proof of \cite[Lemma 2.3]{JSS}, $\UU_a$ then contains all sequences of the form $10v_1 10v_2\dots$, where $v_1,v_2,\dots\in\{0,1\}^{k-2}$ are arbitrary. Such sequences lie also in $\widetilde{\UU}_a$, and hence, by \eqref{eq:key-sandwich}, the bi-Lipschitz property of $\Phi|_{\UU_a}$ and a standard calculation,
%\begin{equation}
%\dim_H \DD_\infty(a)\geq \dim_H \widetilde{\UU}_a\geq \frac{\log(2^{k-2})}{\log 3^k}=\frac{k-2}{k}\cdot\frac{\log 2}{\log 3}.
%\label{eq:lower-dim-estimate}
%\end{equation}
%(Note that our lower estimate differs by a multiplicative constant from that in \cite{JSS}, since our metric $\varrho$ on $\Omega$ uses the base $1/3$ instead of $\lambda$.) This estimate is reasonably sharp in the sense that as $k\to\infty$ (and thereby $a\searrow 1/2$), the expression on the far right of \eqref{eq:lower-dim-estimate} tends to $\log 2/\log 3$, which is the dimension of $\DD_\infty(1/2)$ by Theorem \ref{thm:size-and-dimension}(ii). But of course, \eqref{eq:lower-dim-estimate} is useless for $a_3\leq a<\hat{a}$.
}
\end{remark}

\section{The case of rational $x$} \label{sec:rational}

In this final section we examine what the condition in Theorem \ref{thm:main}(i) means for (nontriadic) rational $x$. To keep the presentation simple, we consider only points in $\CC$, which have a ternary expansion with $\xi_n\in\{0,2\}$ for all $n$. The straightforward generalization to arbitrary rational points is left to the reader. For $x\in\QQ\cap(0,1)$, there exists $m\in\NN$ such that the ternary expansion $\{\xi_n\}$ of $x$ satisfies $\xi_{k+m}=\xi_k$ for all sufficiently large $k$; call the smallest such $m$ the {\em period} of $\{\xi_n\}$. %Let $\CC'$ be the subset of $\CC$ formed by removing all triadic rational points (i.e. those of the form $j/3^n$). It is clear that such points do not satisfy \eqref{eq:main-condition}.

\begin{theorem} \label{thm:rational-case}
Let $x\in \QQ\cap \CC$ have ternary expansion $\{\xi_n\}$ with period $m\geq 2$. Write $x$ as $x=0.\xi_1\dots\xi_{k_0}(\zeta_1\dots\zeta_m)^\infty$, where $k_0$ is chosen so that $\zeta:=\zeta_1\dots\zeta_m$ is lexicographically largest among all its cyclical permutations. For $j=1,\dots,m$, set $\eta_j:=\zeta_j/2$. Then $\eta_m=0$, and $F_a^+(x)=\infty$ if and only if 
\begin{equation}
\sum_{j=1}^{m-1} \eta_j a^j+a^m<1.
\label{eq:polynomial-inequality}
\end{equation}
\end{theorem}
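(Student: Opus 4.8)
The plan is to combine Lemma~\ref{lem:infinite-right-derivative} with the $\beta$-expansion machinery of Section~\ref{sec:beta-expansions}. Since $x\in\CC$, every ternary digit of $x$ lies in $\{0,2\}$, so Lemma~\ref{lem:infinite-right-derivative} applies directly. Let $\omega\in\{0,1\}^{\NN}$ be the sequence $\omega_n:=\delta_2(\xi_n)=\xi_n/2$; it is eventually periodic with period block $\eta=\eta_1\cdots\eta_m$, and $\Pi_a(\sigma^n\omega)=\sum_{k\ge1}a^k\delta_2(\xi_{n+k})$. By Lemma~\ref{lem:infinite-right-derivative}, $F_a^+(x)=\infty$ iff $(3a)^n\bigl(1-\Pi_a(\sigma^n\omega)\bigr)\to\infty$. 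For $n\ge k_0$ the tail $\xi_{n+1}\xi_{n+2}\cdots$ is $(\text{cyclic rotation of }\zeta)^\infty$, so as $n$ varies over $\{k_0,k_0+1,\dots\}$ the number $\Pi_a(\sigma^n\omega)$ cycles through the $m$ values $\Pi_a(\tau^\infty)$, $\tau$ a cyclic rotation of $\eta$, each attained infinitely often. Since $3a>1$, it follows that $(3a)^n(1-\Pi_a(\sigma^n\omega))\to\infty$ precisely when $\limsup_n\Pi_a(\sigma^n\omega)<1$; because this limsup is \emph{attained}, that is equivalent to
\[
\max_{\tau}\ \Pi_a(\tau^\infty)<1,
\]
the maximum over the $m$ rotations $\tau$ of $\eta$. (If the maximum equals or exceeds $1$, then $1-\Pi_a(\sigma^n\omega)\le0$ for infinitely many $n$.)

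Next I would record that $\eta_m=0$. Since $\zeta$ is not constant (a constant period block would make $x$ triadic, or have period $1$), its lexicographically largest cyclic permutation begins with $2$; if it also ended with $2$, then the rotation beginning at the start of the maximal run of $2$'s that straddles the wrap-around would open with a strictly longer run of $2$'s than $\zeta$ does, contradicting maximality. Hence $\zeta_m=0$, so $\eta_m=0$, and therefore
\[
\Pi_a(\eta^\infty)=\frac{\sum_{j=1}^{m}\eta_j a^j}{1-a^{m}}=\frac{\sum_{j=1}^{m-1}\eta_j a^j}{1-a^{m}},
\]
so that $\Pi_a(\eta^\infty)<1$ is exactly the inequality \eqref{eq:polynomial-inequality}. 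Thus the theorem reduces to the equivalence $\max_\tau\Pi_a(\tau^\infty)<1\iff\Pi_a(\eta^\infty)<1$.

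One implication is trivial, because $\Pi_a(\eta^\infty)$ is one of the numbers $\Pi_a(\tau^\infty)$. For the converse I would exploit that, by the choice of $\zeta$, the periodic sequence $\eta^\infty$ is self-maximal, i.e.\ $\sigma^k(\eta^\infty)\preceq\eta^\infty$ for all $k\in\ZZ_+$. Let $d$ be the quasi-greedy expansion of $1$ in base $\beta=1/a$, as in Section~\ref{sec:beta-expansions}; then $\Pi_a(d)=1$ and $\Pi_a(\sigma^t d)\le1$ for every $t$ (a standard property of $d$). Assume $\Pi_a(\eta^\infty)<1$; then $\eta^\infty\ne d$, and I claim $\eta^\infty\prec d$. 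If not, $\eta^\infty\succ d$, so $\eta^\infty$ and $d$ agree on positions $1,\dots,t-1$ while $\eta^\infty$ has a $1$ in position $t$ where $d$ has a $0$; consequently
\[
\Pi_a(\eta^\infty)\ \ge\ \sum_{j<t}d_j a^j+a^t\ \ge\ \sum_{j<t}d_j a^j+a^t\,\Pi_a(\sigma^t d)\ =\ \Pi_a(d)\ =\ 1,
\]
a contradiction. Hence $\eta^\infty\prec d$, and by self-maximality $\sigma^k(\eta^\infty)\preceq\eta^\infty\prec d$ for every $k$. By the equivalence \eqref{eq:shift-equivalenvce} (the direction needed here is Parry's \cite[Lemma~1]{Parry}), this yields $\Pi_a(\sigma^k(\eta^\infty))<1$ for all $k$, i.e.\ $\Pi_a(\tau^\infty)<1$ for every rotation $\tau$ of $\eta$, so $\max_\tau\Pi_a(\tau^\infty)<1$. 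This closes the circle.

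The main obstacle is conceptual rather than computational: $\Pi_a$ is \emph{not} monotone for the lexicographic order, so the lexicographically largest rotation $\eta$ need not itself maximize $\Pi_a(\tau^\infty)$ over rotations $\tau$, and one cannot simply argue that the maximum is $\Pi_a(\eta^\infty)$. The step above circumvents this by routing through the quasi-greedy expansion $d$ and invoking the sharp bound $\Pi_a(\sigma^t d)\le1$ — which is exactly what is needed to replace the too-weak geometric estimate $\Pi_a(\sigma^t d)\le a/(1-a)$ (that estimate is $\ge1$ when $a>1/2$ and would not force the contradiction). A secondary point to handle carefully is the reduction in the first paragraph: because $x$ is rational the relevant limsup is attained, so that $\max_\tau\Pi_a(\tau^\infty)=1$ already precludes $F_a^+(x)=\infty$ (this is why the inequality in \eqref{eq:polynomial-inequality} must be \emph{strict}).
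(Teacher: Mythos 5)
Your proof is correct and follows essentially the same route as the paper's: reduce via Lemma~\ref{lem:infinite-right-derivative} to the finitely many tail values $\Pi_a(\tau^\infty)$ over the cyclic rotations $\tau$ of $\eta$, then control all rotations at once by comparing the self-maximal sequence $\eta^\infty$ with the quasi-greedy expansion $d$ of $1$ and invoking Parry's lemma (the converse direction of \eqref{eq:shift-equivalenvce}). The only immaterial difference is that you derive $\eta^\infty\prec d$ from the tail bound $\Pi_a(\sigma^t d)\le 1$, whereas the paper gets the same conclusion by comparing the finite words $\eta_1\dots\eta_{m-1}1$ and $d_1\dots d_m$.
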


\begin{proof}
That $\eta_m=0$ is an immediate consequence of $\zeta_1\dots\zeta_m$ being the lexicographically largest cyclical permutation of the period of $\{\xi_n\}$. Condition \eqref{eq:polynomial-inequality} is necessary because there exist infinitely many $n\in\NN$ such that
\begin{equation*}
\sum_{k=1}^\infty \delta_2(\xi_{n+k})a^k=\sum_{j=1}^m \eta_j a^j(1+a^m+a^{2m}+\dots)=\frac{1}{1-a^m}\sum_{j=1}^{m-1} \eta_j a^j.
\end{equation*}
Sufficiency follows from the ideas of the previous section, in particular the equivalence \eqref{eq:shift-equivalenvce}. If we have \eqref{eq:polynomial-inequality}, then $\eta_1\dots\eta_{m-1}1\prec d_1\dots d_m$, where $1=\sum_{n=1}^\infty d_n a^n$ is the greedy expansion of $1$ in base $\beta:=1/a$. But then $p:=(\eta_1\dots\eta_m)^\infty\prec d$, and since $\eta$ is lexicographically largest among its cyclical shifts, it follows that $\sigma^k(p)\prec d$ for all $k\in\ZZ_+$. Thus, $\Pi_a(\sigma^k(p))<1$ for all $k\in\ZZ_+$. This implies that 
\begin{equation*}
\limsup_{n\to\infty} \sum_{k=1}^\infty a^{k}\delta_2(\xi_{n+k})<1,
\end{equation*}
and hence (see Remark \ref{rem:simpler-condition}), that $F_a^+(x)=\infty$.
\end{proof}

%Observe that the left-hand side of \eqref{eq:polynomial-inequality} is always a polynomial of degree $m$ with coefficients in $\{0,1\}$. 
Recall that $F_a^-(x)=\infty$ if and only if $F_a^+(1-x)=\infty$, so whether $F_a'(x)=\infty$ can be determined by applying Theorem \ref{thm:rational-case} first to $x$ and then to $1-x$.

\begin{example} \label{ex:rational}
{\rm
Let $x=0.0220(2000202)^\infty$. Then $m=7$, and the lexicographically largest cyclical permutation of the repeating part is $\zeta=2200020$, so $\eta=1100010$. Thus, $F_a^+(x)=\infty$ if and only if $a+a^2+a^6+a^7<1$. On the other hand, the $m$-tuple $\eta$ corresponding to $1-x=0.2002(0222020)^\infty$ is $\eta=1110100$, so $F_a^-(x)=\infty$ if and only if $a+a^2+a^3+a^5+a^7<1$. The latter condition is more stringent, so $F_a'(x)=\infty$ if and only if $1/3<a<a^*$, where $a^*\approx .5261$ is the unique positive root of $a+a^2+a^3+a^5+a^7=1$. %Similarly, if we let $y=0.1220(2000202)^\infty$, then $F_a'(y)=-\infty$ when $1/3<a<a^*$, but $F_a$ does not have an infinite derivative at $y$ when $a\geq a^*$.
}
\end{example}

%Sufficiency follows from Lemma \ref{lem:comparisons} and Remark \ref{rem:simpler-condition}: If \eqref{eq:polynomial-inequality} holds, then
%\begin{equation*}
%\sum_{j=1}^m \delta_2(\xi_{n+j})a^j\leq \sum_{j=1}^m \eta_j a^j<1-a^m
%\end{equation*}
%for all sufficiently large $n$, and hence,
%\begin{equation*}
%\limsup_{n\to\infty} \sum_{k=1}^\infty \delta_2(\xi_{n+k})a^k=\max_{1\leq l\leq m}\sum_{j=1}^m \delta_2(\xi_{k_0+l+j})a^j\cdot\frac{1}{1-a^m}<1,
%\end{equation*}
%where $k_0$ is such that $\xi_{k+m}=\xi_k$ for all $k\geq k_0$.
%\end{proof}

\section*{Acknowledgment}
I am grateful to the anonymous referee for many valuable suggestions and for pointing out the interesting works mentioned in the next-to-last paragraph of the Introduction. I thank Dr. Lior Fishman for raising the question about the Hausdorff dimension of the exceptional sets in Okamoto's theorem, which eventually led to Theorem \ref{thm:size-and-dimension}. Finally, I am indebted to Dr. Derong Kong for sending me the papers \cite{KKL} and \cite{KongLi}.

\footnotesize

\end{document}